\newcommand{\thickhline}{%
    \noalign {\ifnum 0=`}\fi \hrule height 1pt
    \futurelet \reserved@a \@xhline
}
\definecolor{darkblue}{rgb}{0.0,0,0.7} 
\definecolor{darkred}{rgb}{0.7,0,0} 
\numberwithin{equation}{section}
\newcommand{\Z}{\mathbb{Z}}
\newcommand{\llangle}{\langle\langle}
\newcommand{\rrangle}{\rangle\rangle}
\newcommand{\N}{\mathbb{N}}
\newcommand{\C}{\mathbb{C}}
\newcommand{\B}{\mathcal{B}}
\newcommand{\im}{\text{Im}}
\newcommand{\Inn}{\text{Inn}}
\newcommand{\multl}{\{\!\!\{}
\newcommand{\multr}{\}\!\!\}}
\newcommand{\xto}{\xrightarrow}
\newcommand{\coker}{\text{coker}}
\newcommand{\ivl}[2]{\tilde\varphi([#1]_\mathfrak c)=[#2]_\mathfrak c}
\newcommand{\col}[6]{\left\{\!\!\!\left\{\begin{pmatrix}#1\\#2\end{pmatrix},\begin{pmatrix}#3\\#4\end{pmatrix},\begin{pmatrix}#5\\#6\end{pmatrix}\right\}\!\!\!\right\}}
\newcommand*\notocchapter[1]{%
  \if@openright\cleardoublepage\else\clearpage\fi
  \thispagestyle{empty}\global\@topnum\z@
  \@afterindenttrue
  \let\@secnumber\@empty
  \@makeschapterhead{#1}\@afterheading
}
\theoremstyle{plain}
\newtheorem{theorem}{Theorem}[section]
\newtheorem{lemma}[theorem]{Lemma}
\newtheorem{proposition}[theorem]{Proposition}
\newtheorem{corollary}[theorem]{Corollary}
\theoremstyle{definition}
\newtheorem{definition}[theorem]{Definition}
\newtheorem{example}[theorem]{Example}
\newtheorem{remark}[theorem]{Remark}
\newtheorem{question}[theorem]{Question}
\newtheorem{notation}[theorem]{Notation}
\title[A generalisation of rank two complex reflection groups]{A combinatorial generalisation of rank two complex reflection groups via generators and relations}
\author{Igor Haladjian}
\address{Institut Denis Poisson, CNRS UMR 7013, Faculté des Sciences et Techniques, Université de Tours, Parc de Grandmont, 
37200 TOURS, France}
\DeclareRobustCommand{\SkipTocEntry}[5]{}
\begin{document}

\thispagestyle{empty}

\begin{abstract}
Complex reflection groups of rank two are precisely the finite groups in the family of groups that we call $J$-reflection groups. These groups are particular cases of $J$-groups as defined by Achar \& Aubert in 2008. The family of $J$-reflection groups generalises both complex reflection groups of rank two and toric reflection groups, a family of groups defined and studied by Gobet. We give uniform presentations by generators and relations of $J$-reflection groups, which coincide with the presentations given by Broué, Malle and Rouquier when the groups are finite. In particular, these presentations provide uniform presentations for complex reflection groups of rank two where the generators are reflections (however the proof uses the classification of irreducible complex reflection groups). Moreover, we show that the center of $J$-reflection groups is cyclic, generalising what happens for irreducible complex reflection groups of rank two and toric reflection groups. Finally, we classify $J$-reflection groups up to reflection isomorphisms.
\end{abstract}
\maketitle
\tableofcontents
\section{Introduction}

Given a finite dimensional complex vector space $V$, a pseudo-reflection of $V$ is an element of $\text{GL}(V)$ of finite order whose set of fixed points is a hyperplane.
A complex reflection group is a finite subgroup of $\text{GL}(V)$ generated by pseudo-reflections of some finite dimensional complex vector space $V$. For basics on complex reflection groups, see \cite{Broué} and \cite{LT}. Irreducible complex reflection groups have been completely classified by Shephard and Todd in 1954 (\cite{ST}) (see also \cite{LT} for a detailed exposition). They fall into two families:\\
$\bullet$ The infinite series denoted by $G(de,e,r)$ with $d,e,r\geq 1$.\\
$\bullet$ The $34$ exceptional complex reflection groups denoted by $G_4,\dots,G_{37}$.\\
Well known examples of complex reflection groups are real reflection groups, but the majority of the groups in the above families are not real reflection groups. The finite real reflection groups are generalised by so-called Coxeter groups. More precisely, a well known theorem of Coxeter states the following:
\begin{theorem}[{\cite{CoxeterClassification2} and \cite{CoxeterClassification}}]\label{CoxiCoxou}
A group is a finite Coxeter group if and only if it is a finite real reflection group.
\end{theorem}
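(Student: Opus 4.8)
The plan is to prove the two implications separately, working with the standard definitions: a \emph{finite Coxeter group} is a finite group admitting a presentation $\langle s_1,\dots,s_n \mid (s_is_j)^{m_{ij}}=1\rangle$ with $m_{ii}=1$ and $m_{ij}=m_{ji}\in\{2,3,\dots\}$ for $i\neq j$, while a \emph{finite real reflection group} is a finite subgroup of the orthogonal group $O(V)$ of a finite dimensional real vector space $V$ that is generated by orthogonal reflections (isometries fixing a hyperplane pointwise and acting as $-1$ on the orthogonal line). The overall strategy is geometric in one direction and representation-theoretic in the other.

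First I would treat the implication \emph{finite real reflection group $\Rightarrow$ finite Coxeter group}. Given such a $W\subseteq O(V)$, I would introduce the arrangement of reflecting hyperplanes, choose a chamber $C$ (a connected component of the complement of the hyperplanes), and let $S=\{s_1,\dots,s_n\}$ be the reflections in the walls of $C$. The first step is to show that $S$ generates $W$: any chamber can be joined to $C$ by a gallery crossing one wall at a time, and crossing a wall amounts to multiplying by a conjugate of some $s_i$, so transitivity of $W$ on chambers yields generation by $S$. The second and harder step is to show that the only relations among the $s_i$ are the Coxeter relations $(s_is_j)^{m_{ij}}=1$, where $m_{ij}$ is the order of $s_is_j$: geometrically each pair of walls spans a rank-two parabolic (dihedral) subgroup governed by the angle between the corresponding roots, and one must verify that every relation in $W$ is a consequence of these rank-two relations. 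The cleanest route is to establish the \emph{exchange condition} for $(W,S)$ using the action on the positive roots (counting the roots sent to negative roots by a word), deduce that $(W,S)$ is a Coxeter system, and hence that $W$ has the claimed presentation.

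For the converse, \emph{finite Coxeter group $\Rightarrow$ finite real reflection group}, I would build the geometric representation. Starting from the Coxeter matrix $(m_{ij})$, set $V=\R^n$ with basis $e_1,\dots,e_n$ and symmetric bilinear form $B(e_i,e_j)=-\cos(\pi/m_{ij})$, and let each $s_i$ act by $s_i(v)=v-2B(v,e_i)e_i$. A direct check shows that this assignment respects the Coxeter relations and preserves $B$, so it defines a representation $\rho\colon W\to O(V,B)$ in which each $\rho(s_i)$ is a reflection. The crucial input is Tits' theorem that $\rho$ is faithful for every Coxeter group. Assuming $W$ is finite, $\rho(W)$ is then a finite subgroup of $\mathrm{GL}(V)$, so averaging any positive definite inner product over $\rho(W)$ produces a $\rho(W)$-invariant positive definite form $B'$; with respect to $B'$ each $\rho(s_i)$, being an isometry of order two fixing a hyperplane, is forced to be an orthogonal reflection. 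Therefore $\rho(W)\cong W$ is a finite subgroup of $O(V,B')$ generated by orthogonal reflections, that is, a finite real reflection group.

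The main obstacles are the two deep structural facts hidden in the steps above. In the first direction it is the completeness of the relations: proving that no relation beyond the rank-two ones is needed, which I would obtain via the exchange/deletion condition and the root-counting length function rather than by any direct manipulation of words. In the second direction it is the faithfulness of the geometric representation, Tits' theorem, whose proof proceeds by analysing the action on the Tits cone and the fundamental chamber; once faithfulness is in hand, the passage from the possibly indefinite form $B$ to a positive definite invariant form by averaging is routine and makes the orthogonal reflection structure manifest.
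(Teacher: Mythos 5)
The paper does not prove this statement: it is Coxeter's classical theorem, quoted with references to \cite{CoxeterClassification2} and \cite{CoxeterClassification}, so there is no in-paper argument to compare against. Your outline is the standard modern proof (as in Bourbaki or Humphreys): for one direction, chambers, galleries, and the exchange/deletion condition established by counting positive roots sent to negative roots; for the other, the geometric representation attached to the Coxeter matrix, faithfulness via Tits' theorem, and averaging to obtain an invariant positive definite form so that the generators become genuine orthogonal reflections. As a sketch this is correct, and you isolate exactly the two nontrivial inputs (completeness of the rank-two relations, faithfulness of $\rho$); a self-contained proof would of course have to supply those two facts, but deferring them to the standard theory is reasonable for a result of this vintage. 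The only cosmetic remark is that in the finite case the form $B$ of the geometric representation is itself already positive definite (this is the usual finiteness criterion), so the averaging step, while perfectly valid and arguably the cleanest way to phrase it, is not strictly needed once that is known.
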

It would be desirable to have an analog of Coxeter groups for complex reflection groups. In 2008, Achar and Aubert introduced in \cite{AA} a family of groups they called $J$-groups which generalize rank two complex reflection groups. The definition of $J$-groups is as follows:
\begin{definition}
 For $k,n,m\in \N_{\geq 2}$, define $J\begin{pmatrix} k & n & m \\ & & \end{pmatrix}$ to be the group with presentation
 $$\langle s,t,u\, | \, s^k=t^n=u^m=1, \, stu=tus=ust\rangle.$$
More generally, for pairwise coprime elements $k',n',m'\in \N^*$ such that $x'$ divides $x$ for all $x\in \{k,n,m\}$, the group $J\begin{pmatrix} k & n & m \\ k'&n' &m' \end{pmatrix}$ is defined as the normal closure of $\{s^{k'},t^{n'},u^{m'}\}$ in $J\begin{pmatrix} k& n & m \\ & & \end{pmatrix}$. Finally, whenever $k',n'$ or $m'$ is equal to $1$, we omit it in the notation.
\end{definition}

The main result of \cite{AA} is the following theorem, which has a similar flavor to that of Theorem \ref{CoxiCoxou}:
\begin{theorem}[{\cite[Theorem 1.2]{AA}}]\label{THMAA}
A group is a finite $J$-group if and only if it is a rank two complex reflection group.
\end{theorem}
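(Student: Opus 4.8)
The plan is to organise the proof around the element $z:=stu$ of $J(k,n,m):=J\begin{pmatrix}k&n&m\\&&\end{pmatrix}$ and the central extension it determines. First I would check directly from the relations that $z$ is central: the equalities $stu=tus=ust$ say exactly that $s$ commutes with $tu$, that $t$ commutes with $us$ and that $u$ commutes with $st$, and a one-line computation such as $zs=stus=s\,(tus)=s\,(stu)=sz$ (with the symmetric computations for $t$ and $u$) shows that $z$ commutes with every generator. Passing to the quotient by $\langle z\rangle$ kills the right-hand sides of the three relations, leaving $\bar s\,\bar t\,\bar u=1$; eliminating $\bar u=(\bar s\,\bar t)^{-1}$ identifies $\bar J:=J(k,n,m)/\langle z\rangle$ with the von Dyck group $D(k,n,m):=\langle x,y\mid x^k=y^n=(xy)^m=1\rangle$. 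Every $J$-group therefore sits in a central extension
$$1\longrightarrow\langle z\rangle\longrightarrow J(k,n,m)\longrightarrow D(k,n,m)\longrightarrow 1,$$
and the theorem becomes the statement that, among all such extensions, the finite ones are exactly the rank two complex reflection groups.

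For the implication "finite $J$-group $\Rightarrow$ rank two complex reflection group", finiteness of $J(k,n,m)$ forces $D(k,n,m)$ to be finite, hence $\tfrac1k+\tfrac1n+\tfrac1m>1$; up to permutation this leaves only the dihedral triples $(2,2,m)$ and the polyhedral triples $(2,3,3),(2,3,4),(2,3,5)$ giving $A_4,S_4,A_5$ (together with the degenerate cyclic cases). Since the finite subgroups of $\mathrm{PGL}_2(\C)\cong\mathrm{SO}(3)$ are precisely the cyclic, dihedral and polyhedral groups, $\bar J$ embeds in $\mathrm{PGL}_2(\C)$, and I would lift its standard generators to pseudo-reflections $s,t,u\in\mathrm{GL}_2(\C)$ of orders $k,n,m$ (any element of $\mathrm{PGL}_2(\C)$ of order $d$ has a reflection lift of order $d$, namely the one with eigenvalues $1$ and a primitive $d$-th root of unity). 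Because we are in rank two, the projective identity $\bar s\,\bar t\,\bar u=1$ already forces $stu$ to be a scalar; as scalars are central one gets $tus=s^{-1}(stu)s=stu$ and likewise $ust=stu$, so the relation $stu=tus=ust$ holds automatically. The group $\langle s,t,u\rangle\le\mathrm{GL}_2(\C)$ is thus a rank two complex reflection group admitting a surjection from $J(k,n,m)$.

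For the converse I would invoke the Shephard–Todd classification. Given a rank two complex reflection group $W\le\mathrm{GL}_2(\C)$, its centre consists of scalar matrices (Schur's lemma in the irreducible case), so $\bar W:=W/Z(W)$ is a finite subgroup of $\mathrm{PGL}_2(\C)$ and hence a spherical von Dyck group $D(k,n,m)$. Going through the classification I would, in each case, select three generating pseudo-reflections $s,t,u$ of $W$ whose projective images are the standard triangle generators of $\bar W$; then $stu$ projects to $1$, so it is a scalar and the relation $stu=tus=ust$ holds as above, yielding a surjection $J(k,n,m)\onto W$ that matches the reflection orders $k,n,m$ with those recorded in the Broué–Malle–Rouquier presentations.

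The step I expect to be the real obstacle is the control of the centre, and it is what forces both directions to close up into isomorphisms rather than mere surjections. The presentation only exhibits $J(k,n,m)$ as \emph{some} central extension of $D(k,n,m)$ by $\langle z\rangle$, a priori possibly infinite; one must compute the exact order of $z$ and check that it equals $|Z(W)|$, i.e. that the extension singled out by $stu=tus=ust$ is precisely the one realised inside $\mathrm{GL}_2(\C)$ and not a larger covering. This is where the Schur multipliers of the polyhedral groups and the explicit matrix (equivalently, BMR) data must be brought in, and it is the reason the classification of irreducible complex reflection groups cannot be bypassed.
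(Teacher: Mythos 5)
First, a point of comparison that matters here: the paper does not prove this statement at all --- it is quoted verbatim from Achar--Aubert \cite[Theorem 1.2]{AA} (and restated as Theorem \ref{Classification1}), so there is no internal proof to measure yours against. That said, your structural framework is sound and matches what the paper does import from the literature: $stu$ is indeed central, $J(k,n,m)/\langle stu\rangle$ is the von Dyck group, and this is exactly the short exact sequence $1\to\langle stu\rangle\to J(k,n,m)\to W^+_{k,n,m}\to 1$ of \cite[Theorem 3.3]{Gobet Toric} recalled in Theorem \ref{CenterToric}.

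There are, however, two genuine gaps. The most serious is that you only ever treat the parent groups $J(k,n,m)$, whereas the theorem concerns all $J$-groups $J\begin{pmatrix}k&n&m\\k'&n'&m'\end{pmatrix}$, i.e.\ normal closures of $\{s^{k'},t^{n'},u^{m'}\}$ in a parent. In the forward direction this is repairable (such a subgroup has finite index and is generated by conjugates of nontrivial powers of pseudo-reflections, hence is a reflection subgroup once the parent is realised in $\mathrm{GL}_2(\C)$), but in the converse direction it is fatal as written: many rank two complex reflection groups --- for instance $G_{13}$, $G_{15}$ and the groups $G(de,e,2)$ with $1<e<de$ --- are \emph{not} isomorphic to any parent $J(k,n,m)$ and cannot be produced by your recipe of choosing three generating reflections projecting to the standard triangle generators of $W/Z(W)$; they arise only as proper normal closures (this is visible in Table 1 of the paper, where they appear as $W_b^c$ with $b$ or $c$ nontrivial). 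Your argument never constructs these groups as $J$-groups. The second gap is the one you flag yourself: the order of $stu$ in $J(k,n,m)$ is never computed, so in both directions you only obtain surjections $J(k,n,m)\onto W$ with kernel contained in the centre, not isomorphisms. Since pinning down that order and matching it with $|Z(W)|$ case by case is precisely the substance of the Achar--Aubert argument, what you have is a correct outline of the strategy rather than a proof.
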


While Theorem \ref{THMAA} precisely describes which $J$-groups are finite, when a $J$-group $H$ is infinite the definition does not in general give a group presentation of $H$, nor a way to determine its center. A first step in this direction was done by Gobet in \cite{Gobet Toric}. He defined and studied what he called \textit{toric reflection groups}, which are quotients of torus knot groups for which the knot groups naturally play a role of "associated braid groups". In \cite{Gobet Toric}, Gobet showed that toric reflection groups are isomorphic to a family of $J$-groups whose finite groups are precisely the irreducible rank two complex reflection groups having exactly one conjugacy class of reflecting hyperplanes. In particular, he defines a notion of generalised reflections for $J$-groups and classifies toric reflection groups up to reflection isomorphisms (see Definition \ref{DefReflectionIso}), which is useful to define their "associated braid group". Moreover, whenever a toric reflection group is finite, its set of generalised reflections coincide with its set of complex reflections.\medskip

In this article, we introduce a family of $J$-groups that we call \textit{$J$-reflection groups}, of which toric reflection groups are a particular case. These groups generalise rank two complex reflection groups, in the sense that every such group is a $J$-reflection group. A $J$-reflection group depends on parameters $k,b,c,n,m\in \N^*$ where $n\wedge m=1$ and is defined as $W_b^c(k,bn,cm):=J\begin{pmatrix} k & bn & cm \\ &n&m \end{pmatrix}$.\\
In Theorem \ref{Pres2} we obtain a group presentation for all $J$-reflection groups where the generators are generalised reflections.
In the case where the group is finite, Theorem \ref{Pres2} recovers the BMR presentation (see \cite[Tables 1-3]{BMR}). Moreover, generalised reflections of finite $J$-reflection groups coincide with their complex reflections. Theorem \ref{Pres2} thus provides a uniform presentation by generators and relations of rank two complex reflection groups where generators are reflections, as part of a larger family of (in general, infinite) reflection groups. To the knowledge of the author, the only previous uniform description by generators and relations of rank two complex reflection groups was given by Etingof and Rains in \cite{ER}\footnote{The author thanks Ivan Marin for pointing out the work of Etingof and Rains.}. However, the generators of these presentations are not always complex reflections.\\
\noindent
It is not clear to us whether $J$-reflection groups would be a better suited generalisation of rank two complex reflection groups than the whole family of $J$-groups. In particular, one would expect from Coxeter-analogous groups to be finitely presented but we do not know if this is the case for every $J$-group. In fact, we do not know if all $J$-groups are finitely generated or not, and an answer to that question would be of interest to pursue the search for such analogs:
\begin{question}
Are all $J$-groups finitely generated ? Are they finitely presented ?
\end{question}

Afterwards, we determine the center of $J$-reflection groups. In order to do so, we use a strategy similar to the one used in \cite[Theorem 1.4]{Gobet Toric} to obtain the center of toric reflection groups. To state the result, we denote by $W^+_{k,n,m}$ the alternating subgroup of the triangle Coxeter group with edge labels $k,n,m$. We obtain the following result, simultaneously generalising the known situation for irreducible rank two complex reflection groups (\cite[Tables 1-4]{BMR}) and toric reflection groups (\cite[Theorem 1.4 and Corollary 1.6]{Gobet Toric}):
\begin{theorem}[Theorem \ref{CenterFINDANAME}]\label{CenterIntro}
With the notation of Theorem \ref{Pres2}, writing $\Delta$ for the element $(x_1\cdots x_ny)^mz^n\in W_b^c(k,bn,cm)$, the following hold:\\\\
(1) We have a short exact sequence
\begin{equation}\begin{tikzcd}
	1 & {\langle \Delta\rangle} & {W_b^c(k,bn,cm)} & {W^+_{k,bn,cm}} & 1
	\arrow[from=1-1, to=1-2]
	\arrow[from=1-2, to=1-3]
	\arrow[from=1-3, to=1-4]
	\arrow[from=1-4, to=1-5]
\end{tikzcd}
\end{equation}
(2) The center of $W_b^c(k,bn,cm)$ is cyclic, generated by $\Delta$.\\
\noindent
In particular, the group $W_b^c(k,bn,cm)$ is a central extension of $W_{k,bn,cm}^+$ by the subgroup $\langle \Delta\rangle$.
\end{theorem}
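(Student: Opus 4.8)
\medskip

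The plan is to realize $W_b^c(k,bn,cm)$ as an explicit central extension of the von Dyck group $W^+_{k,bn,cm}$ and to extract both statements from that extension, following the strategy of \cite[Theorem 1.4]{Gobet Toric}. Recall that $W_b^c(k,bn,cm)$ is, by definition, the subgroup of the ambient $J$-group $\mathcal{J}:=J\begin{pmatrix} k & bn & cm \\ & & \end{pmatrix}=\langle s,t,u \mid s^k=t^{bn}=u^{cm}=1,\ stu=tus=ust\rangle$ generated as a normal subgroup by $s$, $t^n$ and $u^m$. The first point I would establish is that $\delta:=stu$ is central in $\mathcal J$: using the three forms of the relation one computes $\delta s=stu\cdot s=st(us)=st\,(t^{-1}\delta)=s\delta$, and the cyclic symmetry of $stu=tus=ust$ yields the analogous identities $\delta t=t\delta$ and $\delta u=u\delta$. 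Imposing $\delta=1$ collapses the relations of $\mathcal J$ to $s^k=t^{bn}=u^{cm}=stu=1$, which is exactly the presentation of the von Dyck group; thus one obtains a surjection $\pi\colon\mathcal J\onto W^+_{k,bn,cm}$ with kernel $\langle\delta\rangle$.

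For part~(1) I would restrict $\pi$ to $W_b^c(k,bn,cm)$. Writing $p,q,r$ for the images of $s,t,u$, the image of the restriction is the normal closure of $\{p,q^n,r^m\}$ in $W^+_{k,bn,cm}=\langle p,q,r\mid p^k=q^{bn}=r^{cm}=pqr=1\rangle$. Killing these elements forces $p\equiv 1$, hence $r\equiv q^{-1}$, together with $q^n\equiv 1$ and, via $r^m\equiv 1$, also $q^m\equiv 1$; as $n\wedge m=1$ this gives $q\equiv 1$, so the normal closure is everything and $\pi|_{W_b^c}$ is onto $W^+_{k,bn,cm}$. Its kernel is $W_b^c(k,bn,cm)\cap\langle\delta\rangle$, a subgroup of the cyclic group $\langle\delta\rangle$ and hence cyclic. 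The remaining, and genuinely computational, step is to translate $\Delta=(x_1\cdots x_n y)^m z^n$ into the generators $s,t,u$ via the dictionary of Theorem~\ref{Pres2} and to check that $\Delta$ lies in $\langle\delta\rangle$ and generates $W_b^c(k,bn,cm)\cap\langle\delta\rangle$, i.e.\ that it is the least power of $\delta$ contained in $W_b^c(k,bn,cm)$. This yields the short exact sequence of~(1).

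The centrality in part~(2) comes for free from the extension: since $\Delta\in\langle\delta\rangle\subseteq Z(\mathcal J)$ and $W_b^c(k,bn,cm)\subseteq\mathcal J$, the element $\Delta$ commutes with everything in $W_b^c(k,bn,cm)$, so $\langle\Delta\rangle\subseteq Z\bigl(W_b^c(k,bn,cm)\bigr)$. For the reverse inclusion I would use surjectivity of $\pi|_{W_b^c}$: any $z\in Z(W_b^c(k,bn,cm))$ has $\pi(z)\in Z(W^+_{k,bn,cm})$. For every Euclidean or hyperbolic parameter triple, and for the polyhedral cases $A_4,S_4,A_5$, the von Dyck group $W^+_{k,bn,cm}$ is centerless, whence $\pi(z)=1$, i.e.\ $z\in\ker(\pi|_{W_b^c})=\langle\Delta\rangle$. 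Thus $Z(W_b^c(k,bn,cm))=\langle\Delta\rangle$ is cyclic, and the ``in particular'' clause follows from~(1) together with the centrality of $\Delta$.

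The main obstacle is the reverse inclusion in the spherical cases where $W^+_{k,bn,cm}$ has nontrivial center, namely the dihedral triples $W^+_{2,2,\gamma}$ with $\gamma$ even (and the degenerate spherical triples). There $\pi(z)\in Z(W^+_{k,bn,cm})$ no longer forces $\pi(z)=1$, and the extension alone does not preclude a central element of $W_b^c(k,bn,cm)$ mapping onto the central involution of the dihedral quotient---which would enlarge the center. One must rule this out directly, by exhibiting for such a putative $z$ a generator it fails to commute with. Fortunately these are finitely many triples, all giving \emph{finite} $J$-reflection groups, which by Theorem~\ref{THMAA} are rank two complex reflection groups whose centers are tabulated in \cite[Tables 1-4]{BMR}; hence this residual case can be closed by cross-referencing the classification, in the same spirit as the proof of Theorem~\ref{Pres2}. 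The secondary difficulty is the computational matching $W_b^c(k,bn,cm)\cap\langle\delta\rangle=\langle\Delta\rangle$, where one must pin down the exact power of $\delta$ equal to $\Delta$ so that the kernel is neither too large nor too small.
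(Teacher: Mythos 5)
Your overall strategy is exactly the paper's: restrict the central extension $1\to\langle stu\rangle\to J(k,bn,cm)\to W^+_{k,bn,cm}\to 1$ to $W_b^c(k,bn,cm)$, prove surjectivity of the restriction by killing $s$, $t^n$, $u^m$ in the von Dyck presentation and invoking $n\wedge m=1$, deduce $\langle\Delta\rangle\subseteq Z$ from centrality of $stu$, and get the reverse inclusion from triviality of $Z(W^+_{k,bn,cm})$ in the infinite case and from the BMR tables in the finite case. Your worry about the dihedral triples with nontrivial center is not an extra obstacle: those triples force $W^+_{k,bn,cm}$, hence $J(k,bn,cm)$ and $W_b^c(k,bn,cm)$, to be finite, and the paper disposes of \emph{all} finite cases at once by the BMR tables (via Proposition \ref{CenterAlternating} for the infinite ones) -- which is precisely the fallback you propose.

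The one genuine gap is the step you explicitly defer: identifying $\ker(\pi|_{W_b^c(k,bn,cm)})=W_b^c(k,bn,cm)\cap\langle stu\rangle$ with $\langle\Delta\rangle$. Without it, statement (1) is not proved, since the content of (1) is that the kernel is generated by that specific element. The paper closes this cleanly and you should too: the quotient $J(k,bn,cm)/W_b^c(k,bn,cm)\cong\Z/n\times\Z/m\cong\Z/nm$ (Remark \ref{QuotientAbParent}) sends $s\mapsto 0$ and sends $t$, $u$ to elements of coprime orders $n$ and $m$, so the image of $stu$ has order exactly $nm$; hence $W_b^c(k,bn,cm)\cap\langle stu\rangle=\langle (stu)^{nm}\rangle$ (the paper packages this as a snake-lemma argument on its double complex, but this direct computation suffices). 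Then the dictionary $x_i=t^{i-1}st^{1-i}$, $y=t^n$, $z=u^m$ of Theorem \ref{Pres2} gives
\begin{equation*}
(x_1\cdots x_ny)^mz^n=\bigl(s(tst^{-1})\cdots(t^{n-1}st^{1-n})t^n\bigr)^m(u^m)^n=((st)^n)^mu^{mn}=(stu)^{nm},
\end{equation*}
the last equality because $stu$ is central. Filling in this computation completes your argument.
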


A further step will be to attach to each $J$-reflection group $W$ a "braid group" $\B(W)$, enjoying similar properties to that of complex braid groups. This associated braid group will be defined in future work by the author, where it will be shown that it only depends on the (reflection) isomorphism type of $W$ (see Definition \ref{DefReflectionIso} for the meaning of a reflection isomorphism). The following classification result will be useful to show that braid groups associated to $J$-reflection groups are well defined up to reflection isomorphisms:
\begin{theorem}[Theorem \ref{Classification3}]\label{Theorem B}
The groups $W_b^c(k,bn,cm)$ and $W_{b'}^{c'}(k',b'n',c'm')$ are isomorphic in reflection if and only if the multisets of columns $\col{k}{1}{bn}{n}{cm}{m}$ and $\col{k'}{1}{b'n'}{n'}{c'm'}{m'}$ are equal.
\end{theorem}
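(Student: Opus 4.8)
The plan is to prove both implications by transporting the problem to the rotation (von Dyck) quotient $W^+_{k,bn,cm}$ supplied by Theorem \ref{CenterIntro}, while keeping track of the generalised reflections in the sense of Definition \ref{DefReflectionIso}. Throughout I read a column $\begin{pmatrix}P\\Q\end{pmatrix}$ as the datum of a generator of order $P$ in the ambient $J$-group whose $Q$-th power is the associated generalised reflection, of order $R:=P/Q$; thus the three columns of $W_b^c(k,bn,cm)$ record the pairs $(P,R)=(k,k),(bn,b),(cm,c)$, equivalently $(P,Q)=(k,1),(bn,n),(cm,m)$.

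For the "if" direction I show that the reflection-isomorphism type of $J\begin{pmatrix}k&bn&cm\\1&n&m\end{pmatrix}$ depends only on the multiset of its columns, by realising the symmetric group $S_3$ acting on the columns through explicit reflection isomorphisms. The element $\delta:=stu=tus=ust$ is central (conjugating $\delta$ by $s$, $t$ or $u$ and using the defining relations fixes it), so the presentation is manifestly invariant under the cyclic relabelling $s\mapsto t\mapsto u\mapsto s$, which realises the $3$-cycles. A transposition is realised by $s\mapsto u^{-1},\ t\mapsto t^{-1},\ u\mapsto s^{-1}$: using centrality of $\delta$ one checks that the images satisfy the $J$-relations with the first and third parameters interchanged (each of $u^{-1}t^{-1}s^{-1}$, $t^{-1}s^{-1}u^{-1}$, $s^{-1}u^{-1}t^{-1}$ equals $\delta^{-1}$), giving an isomorphism onto $J\begin{pmatrix}cm&bn&k\\m&n&1\end{pmatrix}$. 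Both maps send each standard generator to an inverse/power of a standard generator, hence carry generalised reflections to generalised reflections and the relevant normal closure onto the relevant one; restricting to the subgroups $W_b^c$ yields reflection isomorphisms. Since $3$-cycles and a transposition generate $S_3$, equality of the column multisets $\col{k}{1}{bn}{n}{cm}{m}$ and $\col{k'}{1}{b'n'}{n'}{c'm'}{m'}$ produces a reflection isomorphism.

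For the "only if" direction, let $\phi\colon W_b^c(k,bn,cm)\iso W_{b'}^{c'}(k',b'n',c'm')$ be a reflection isomorphism. The center is characteristic, so by Theorem \ref{CenterIntro} $\phi$ descends to an isomorphism $\bar\phi\colon W^+_{k,bn,cm}\to W^+_{k',b'n',c'm'}$ of rotation subgroups of triangle groups. In the hyperbolic range every nontrivial finite-order element of a von Dyck group fixes a unique point, hence lies in a unique maximal finite cyclic subgroup, a vertex stabiliser, of order $P\in\{k,bn,cm\}$. A generalised reflection $r$ of order $R$ injects into $W^+$ (as $\langle\Delta\rangle$ is infinite cyclic while $r$ has finite order, $\langle r\rangle\cap\langle\Delta\rangle=1$), and its image lands in the vertex stabiliser of the corresponding order $P$. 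Since $\phi$ maps reflections to reflections, $\bar\phi$ preserves element orders and carries the unique maximal finite cyclic subgroup containing $\bar r$ to the one containing $\overline{\phi(r)}$; therefore the pair $(P,R)$ attached to the class of $r$ coincides with the pair attached to the class of $\phi(r)$. Running over the three reflection classes, which correspond bijectively to the three vertices, recovers the equality of the multisets of pairs $(P,R)$, i.e. of the two column multisets.

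The main obstacle is the non-hyperbolic range, where $W^+$ is finite (spherical triangle) or virtually abelian (Euclidean triangle) and the "unique maximal finite cyclic subgroup" property can fail, so the clean vertex-order bookkeeping above must be revisited. These cases are finite in number; there $W_b^c(k,bn,cm)$ is itself finite or affine, and I would dispatch them directly, using the coprimality $n\wedge m=1$ to restrict the admissible parameters and, for the finite groups, comparing against the BMR data recorded in Theorem \ref{Pres2}. I expect this finite case analysis, rather than the hyperbolic argument, to be the delicate part of the proof.
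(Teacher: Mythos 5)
Your ``if'' direction is essentially the paper's Remark \ref{isopermute} (column permutations induce reflection isomorphisms), and your ``only if'' direction takes a genuinely different, more geometric route than the paper's. The paper first extracts $\multl k,bn,cm\multr$ from maximal finite subgroups of $\mathrm{Inn}\cong W^+$ (Lemma \ref{SameParent}) and $\multl k,b,c\multr$ from reflection orders (Lemma \ref{SameOrder}), and then runs a long case analysis over all possible matchings of hyperplane classes, using quotient maps onto toric reflection groups and Gobet's Theorem \ref{Classification2}; you instead read off each column directly as the pair (order of the vertex stabiliser of $W^+$ containing the image of the class, maximal reflection order in the class). In the hyperbolic range with $b,c\neq 1$ this is correct and cleaner. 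But there is a genuine gap: your claim that the reflection classes ``correspond bijectively to the three vertices'' fails whenever $b=1$ or $c=1$, since by Proposition \ref{NumberConj} there are only $3-\delta_{1,b}-\delta_{1,c}$ conjugacy classes of reflecting hyperplanes. The columns $\begin{pmatrix}n\\n\end{pmatrix}$ (when $b=1$) and $\begin{pmatrix}m\\m\end{pmatrix}$ (when $c=1$) are invisible to your bookkeeping, and these are exactly the toric and one-parameter cases where the paper's proof works hardest. The gap is repairable: your maximal-finite-subgroup argument applied to all of $W^+$ (not only to images of reflections) gives $\multl k,bn,cm\multr=\multl k',b'n',c'm'\multr$, Corollary \ref{Numberof1} makes the number of missing columns agree on both sides, and every missing column has equal top and bottom entries, so the leftover top entries determine them --- but none of this is in your text.

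On the deferred degenerate cases: the Euclidean range is actually not a problem, since a nontrivial finite-order orientation-preserving isometry of the plane still fixes a unique point and Proposition \ref{MaximalSubgroup} is stated for all infinite $W^+_{k,n,m}$, so your argument goes through there verbatim; the spherical range is the finite case, where the paper simply invokes Theorem \ref{Classification1} and you should do the same rather than re-deriving BMR data. Two smaller points you should make explicit: a reflection injects into $W^+$ only because $\langle\Delta\rangle$ is torsion-free in the infinite case (this needs a citation to \cite[Theorem 3.3]{Gobet Toric}); and to recover the column you need the \emph{maximal} order in the class together with the facts that a reflection isomorphism maps $[r]_{\mathfrak c}$ onto $[\varphi(r)]_{\mathfrak c}$ preserving orders and that all elements of $[r]_{\mathfrak c}$ land in conjugates of a single vertex stabiliser (Lemma \ref{InvariantPre}, Remark \ref{OrderIsInvariant}, Lemma \ref{OrderFINDANAMERef}).
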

\medskip

The paper is organised as follows. In section 2, we start by recalling the definition of $J$-groups and some results proven in \cite{AA} and \cite{Gobet Toric}. Afterwards, we give a presentation of $J$-reflection groups (Theorem \ref{Pres2}) and determine their center (Theorem \ref{CenterIntro}). In section 3, we recall some results proven in \cite{Gobet Toric} and study the "reflection structure" of $J$-reflection groups. In particular, we classify $J$-reflection groups up to reflection isomorphisms (Theorem \ref{Theorem B}).
\addtocontents{toc}{\SkipTocEntry}
\section*{Acknowledgments}
This work is part of my PhD thesis, done under the supervision of Thomas Gobet and Cédric Lecouvey at Université de Tours, Institut Denis Poisson. 
I thank Thomas Gobet for his careful and numerous readings of all redaction stages of this article. I also thank him for all his precious advice, suggestions and his answers to my questions. I also thank Cédric Lecouvey for his careful reading at numerous stages of this article and his advice. Moreover, I thank Ivan Marin for pointing out \cite{ER}. I thank Eddy Godelle and Luis Paris for suggesting the name $J$-reflection groups. In addition, I thank David Wahiche and Owen Garnier for several helpful discussions. Finally, I thank the reviewer for their precise reading and useful suggestions. This work was partially supported by the ANR
project CORTIPOM (ANR-21-CE40-0019).

\section{\texorpdfstring{$J$-reflection groups}{}}

In \cite{AA}, Achar and Aubert introduced $J$-groups as a family of groups generalising rank two complex reflection groups. Not only $J$-groups are abstract generalisations of rank two complex reflection groups, but the way they are defined suggests a relevant notion of algebraic reflections for these groups. A first infinite family of $J$-groups called toric reflection groups was studied by Gobet in \cite{Gobet Toric}, where he studied general results about $J$-groups and gave several properties of toric reflection groups, some of which are stated below.\\
In this section, we start by defining $J$-groups and state some known results about them. Afterwards, we extend the notion of toric reflection groups to that of $J$-reflection groups. We give a presentation for these groups and prove similar results to those proven about toric reflection groups in \cite{Gobet Toric}. In particular, we classify reflections of $J$-reflection groups, identify their center and compute their inner automorphism group.

\subsection{\texorpdfstring{Definitions and preliminary results}{}}

\begin{definition}[{\texorpdfstring{\cite[Introduction]{AA}}{}}]\label{defJ}
Let $k,n,m\in \N_{\geq 2}$. Define $G:=J\begin{pmatrix} k & n & m\\ & & \end{pmatrix}$ to be the group with presentation 
\begin{equation}\label{PresParent}\left\langle 
\begin{array}{c|c}s,t,u & s^k=t^n=u^m=1,\, \, stu=tus=ust
\end{array}\right\rangle,\end{equation}
and define $s,t$ and $u$ to be the \textbf{canonical generators} of $G$.\\
For positive pairwise coprime parameters $k',n',m'$ such that $k'|k,n'|n$ and $m'|m$, define $H:=J\begin{pmatrix} k & n & m \\ k' & n' & m'\end{pmatrix}$ to be $\llangle s^{k'},t^{n'},u^{m'}\rrangle_{G}$, where $\llangle X \rrangle_G$ denotes the normal closure of $X$ in $G$. Such groups are called \textbf{$J$-groups} and were defined by Achar and Aubert in \cite{AA}. In this context $G$ is called the \textbf{parent $J$-group} of $H$.\\
Finally, define $C(H)$ to be the multiset $\left\{\!\!\!\left\{\begin{pmatrix}k\\k'\end{pmatrix},\begin{pmatrix}n\\n'\end{pmatrix},\begin{pmatrix}m\\m'\end{pmatrix}\right\}\!\!\!\right\}$.
\end{definition}

\begin{remark}
For all $k,n,m\in \N_{\geq 2}$, we have $J\begin{pmatrix} k & n & m\\ & & \end{pmatrix}=J\begin{pmatrix} k & n & m\\1 &1 &1 \end{pmatrix}$. Following this observation, whenever $k',n'$ or $m'$ is equal to $1$, we omit it from the notation. Moreover, we will use the notation $J(k,n,m)$ for $J\begin{pmatrix} k & n & m\\ & & \end{pmatrix}$.
\end{remark}

\begin{remark}\label{isopermute}
Let $k,n,m\in \N_{\geq 2}$. Let $s,t,u$ be the canonical generators of $J(k,n,m)$ and $s',t',u'$ be the canonical generators of $J(k,m,n)$. The map sending $s'$ to $s^{-1}$, $t'$ to $u^{-1}$ and $u'$ to $t^{-1}$ extends to an isomorphism $$J(k,m,n)\xto \cong J(k,n,m).$$
Moreover, since the normal closure of the inverse of an element is equal to the normal closure of this element in any group, this isomorphism restricts to an isomorphism 
$$J\begin{pmatrix} k & m & n\\ k' & m' & n' \end{pmatrix}\xto \cong J\begin{pmatrix} k & n & m\\ k' & n'& m' \end{pmatrix}.$$
More generally, the isomorphism type of $J\begin{pmatrix} k & n & m\\ k'& n' &m' \end{pmatrix}$ is invariant under column permutations, so that if two $J$-groups $H_1$ and $H_2$ are such that $C(H_1)=C(H_2)$, they are isomorphic.
\end{remark}

Complex reflection groups are defined as finite subgroups of $\mathrm{GL}_n(\C)$ for some $n\in \N^*$ generated by reflections of $\C^n$ (that is, finite order automorphisms whose set of fix points forms a complex hyperplane). Such a group $G$ is called irreducible if the natural representation $G\to\mathrm{GL}_n(\C)$ is irreducible. Moreover, the rank of $G$ is defined as the dimension of the orthogonal complement of the subspace of $\C^n$ fixed pointwise by $G$. In \cite{AA}, Achar and Aubert show the following result:

\begin{theorem}[{\cite[Theorem 1.2]{AA}}]\label{Classification1}
A group is a complex reflection group of rank two if and only if it is a finite $J$-group. Moreover, the reflection isomorphism type of a finite $J$-group $H$ is uniquely determined by $C(H)$.
\end{theorem}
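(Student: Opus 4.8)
The plan is to build the whole classification on one structural observation: the braid-type relation forces $z:=stu$ to be central, so that every $J$-group is a central extension of a triangle group, and the finite cases can then be matched against the Shephard--Todd list. I would first prove that $z$ is central in the parent group $G=J(k,n,m)$. Reading $stu=tus$ as $s\cdot(tu)=(tu)\cdot s$, reading $stu=ust$ as $(st)\cdot u=u\cdot(st)$, and reading $tus=ust$ as $t\cdot(us)=(us)\cdot t$, one sees that $s$ commutes with $tu$, that $u$ commutes with $st$, and that $t$ commutes with $us$; a one-line computation then gives $sz=zs$, and since the three braid relations are permuted cyclically by $s\mapsto t\mapsto u\mapsto s$ while $z$ is fixed, also $tz=zt$ and $uz=zu$. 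Killing $\langle z\rangle$ turns $stu=tus=ust$ into $stu=1$, so that
\[ G/\langle z\rangle\;\cong\;\langle a,b,c\mid a^k=b^n=c^m=abc=1\rangle \]
is the von Dyck (rotation) triangle group $D(k,n,m)$, and I obtain a central extension $1\to\langle z\rangle\to G\to D(k,n,m)\to 1$ with cyclic kernel.

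This splits finiteness of $G$ into two requirements: the quotient $D(k,n,m)$ must be finite, which holds exactly for the spherical triples $\tfrac1k+\tfrac1n+\tfrac1m>1$ (that is $(2,2,p),(2,3,3),(2,3,4),(2,3,5)$ up to permutation); and the central element $z$ must have finite order. The second requirement is the genuinely delicate one and, I expect, the main obstacle, since a parent group can be infinite over a spherical base: in $J(2,2,p)$ one computes $z^{p}=(st)^{p}$ inside the infinite dihedral subgroup $\langle s,t\rangle$, so $z$ has infinite order there. Determining precisely for which $(k,n,m)$ and which coprime $(k',n',m')$ the relevant central element has finite order is exactly what forces the passage to the $2$-dimensional realisation and the classification.

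The two directions of the equivalence then pin down the finite list. For \emph{finite $J$-group $\Rightarrow$ rank two complex reflection group} I would, in each spherical case admitting a finite centre, exhibit three pseudo-reflections $\sigma,\tau,\upsilon\in\mathrm{GL}_2(\mathbb{C})$ of orders $k,n,m$ whose product is a scalar matrix --- a configuration dictated by the spherical geometry of the $(k,n,m)$-triangle --- so that $s\mapsto\sigma,\,t\mapsto\tau,\,u\mapsto\upsilon$ defines a homomorphism from $J(k,n,m)$; checking injectivity via the order bound $|G|\le|\langle z\rangle|\cdot|D(k,n,m)|$ exhibits the group as generated by pseudo-reflections, whence the sub-$J$-group $\llangle s^{k'},t^{n'},u^{m'}\rrangle$ becomes the reflection subgroup generated by $\sigma^{k'},\tau^{n'},\upsilon^{m'}$, of orders $k/k',n/n',m/m'$. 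For the converse I would invoke the Shephard--Todd classification and run through the rank two groups --- the series $G(de,e,2)$, the exceptional groups $G_4,\dots,G_{22}$, and the reducible products of two cyclic reflection groups --- massaging each Coxeter/BMR presentation into the shape ``distinguished reflections whose product is central with prescribed orders'' and producing the matching $J$-presentation. Organising this uniformly rather than purely case by case is the laborious core of the argument.

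Finally, for the claim that $C(H)$ determines the reflection isomorphism type, I would check that the multiset of columns $\binom{k}{k'},\binom{n}{n'},\binom{m}{m'}$ records exactly the reflection-theoretic data of $H$: each column pins down a class of reflections together with its order $k/k'$ and the order $k$ of the underlying canonical generator, while Remark~\ref{isopermute} shows the isomorphism type is insensitive to reordering the columns. Since a reflection isomorphism (Definition~\ref{DefReflectionIso}) must carry reflections to reflections while preserving their orders and hyperplane orbits, two finite $J$-groups are reflection isomorphic if and only if their column multisets coincide, which is the asserted uniqueness.
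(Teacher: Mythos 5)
First, a framing point: the paper does not prove this statement at all --- it is quoted from Achar--Aubert \cite[Theorem 1.2]{AA}, and the only independent contribution of the present paper to it comes later, when Theorem~\ref{Pres2} and Table~1 re-derive one direction (every rank two complex reflection group is a finite $J$-group) by matching Presentation~\eqref{GeneralPresW} against the BMR tables. Judged as a reconstruction of the proof in \cite{AA}, your text is a plan rather than a proof. Your structural observation is correct and matches Theorem~\ref{CenterToric}: $stu$ is central and $J(k,n,m)/\langle stu\rangle$ is the von Dyck group, so finiteness forces a spherical triple. But the two components carrying all the content --- exhibiting, for each spherical triple, pseudo-reflections realising $J(k,n,m)$ faithfully in $\mathrm{GL}_2(\mathbb C)$ together with a bound on the order of $stu$, and conversely recasting every group on the Shephard--Todd rank two list as a $J$-group --- are announced as ``the laborious core'' but not carried out. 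The final uniqueness assertion is also only asserted, and your argument for it has a gap: you claim each column $\binom{k}{k'}$ is recovered because it records ``the order $k$ of the underlying canonical generator'', but a reflection isomorphism of $H$ only sees $R(H)$, hence a priori only the order $k/k'$ of $s^{k'}$ and the structure of its class, not $k$ and $k'$ separately. Showing that $C(H)$, and not merely the multiset of orders of reflection classes, is a reflection-isomorphism invariant is precisely the nontrivial point; it is what Section~3 of this paper has to work hard for in the infinite case, and in the finite case it is settled in \cite{AA} by inspection of the classification.

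Second, your illustrative claim about $J(2,2,p)$ is false and contradicts the very theorem you are proving. Since $\tfrac12+\tfrac12+\tfrac1p>1$, the group $W^+_{2,2,p}$ is the rotation subgroup of a finite Coxeter group, and $J(2,2,p)$ is finite (see Remark~\ref{CoxFinite}; for $p$ odd it has order $4p^2$, being an extension of $\mathbb Z/2p$ by the dihedral group $G(p,p,2)$ via Remark~\ref{QuotientAbParent}). In particular $\langle s,t\rangle$ is \emph{not} infinite dihedral there: the relations involving $u$ constrain $st$, and your computation $z^p=(st)^p$ gives no information about the order of either side. More importantly, the phenomenon you posit --- an infinite parent group over a spherical von Dyck quotient --- never occurs, and establishing that the central element automatically has finite order in the spherical cases is exactly the step your sketch defers to ``the passage to the $2$-dimensional realisation''. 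A correct write-up must prove this finiteness rather than present a (nonexistent) counterexample to it.
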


It turns out that whenever $J\begin{pmatrix} k & n & m\\ k' & n'& m' \end{pmatrix}$ is finite, its reflections as a complex reflection group are the conjugates in $J(k,n,m)$ of non trivial powers of $s^{k'},t^{n'}$ or $u^{m'}$. This suggests the following definition:

\begin{definition}[\texorpdfstring{\cite[Definition 2.2]{Gobet Toric}}{}]\label{reflections}
Let $H:=J\begin{pmatrix} k & n & m\\ k' & n'& m' \end{pmatrix}$ be a $J$-group. Define the set $R(H)$ of \textbf{algebraic reflections} of $H$ to be the set of conjugates in $J(k,n,m)$ of non-trivial powers of $s^{k'},t^{n'}$ or $u^{m'}$.
\end{definition}

In practice, we omit the term algebraic and talk about the reflections of $J$-groups, even though these elements are not complex reflections. With this definition, $J$-groups naturally appear as generalised (algebraic) reflection groups. Indeed, by definition, we have $$J\begin{pmatrix} k & n & m\\ k' & n'& m' \end{pmatrix}=\langle \{gxg^{-1}\}_{g\in J(k,n,m),x\in \{s^{k'},t^{n'},u^{m'}\}}\rangle_{J(k,n,m)}$$
so that $J\begin{pmatrix} k & n & m\\ k' & n'& m' \end{pmatrix}$ is generated by its reflections and is a 'reflection subgroup' of $J(k,n,m)$.\\
In \cite{Gobet Toric}, Gobet counts the number of conjugacy classes of reflections of $J(k,n,m)$ and shows that the reflections of $J(k,n,m)$ that lie in $H$ are precisely the reflections of $H$:
\begin{lemma}[{\cite[Lemma 2.5]{Gobet Toric}}]\label{NoConjRef}
No two elements in $$\{s,s^2,\dots,s^{k-1},t,t^2,\dots,t^{n-1},u,u^2,\dots,u^{m-1}\}$$ are conjugate to each other in $J(k,n,m)$. In particular, the group $J(k,n,m)$ admits $k+n+m-3$ conjugacy classes of reflections.
\end{lemma}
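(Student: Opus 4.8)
The plan is to pass to the abelianization $A := J(k,n,m)^{\mathrm{ab}}$ and to show that the $k+n+m-3$ listed elements already have pairwise distinct images there. Since any two conjugate elements have the same image in $A$, this immediately forces that no two of them are conjugate in $J(k,n,m)$.

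First I would compute $A$. Abelianizing the presentation \eqref{PresParent} forces $s,t,u$ to commute, so the two ``braid-type'' relations $stu=tus$ and $tus=ust$ become tautologies and drop out, leaving only $s^k=t^n=u^m=1$. Hence
\[
A\cong \Z/k\Z\times\Z/n\Z\times\Z/m\Z,
\]
with $s,t,u$ mapping to the three standard generators. In particular the image of $s$ has order exactly $k$, so $s$ itself has order exactly $k$ in $J(k,n,m)$ and the elements $s,s^2,\dots,s^{k-1}$ are genuinely nontrivial and pairwise distinct; likewise for $t$ and $u$.

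Next I would record the images in $A$: the element $s^i$ (for $1\le i\le k-1$) maps to $(i,0,0)$, the element $t^j$ (for $1\le j\le n-1$) maps to $(0,j,0)$, and $u^\ell$ (for $1\le \ell\le m-1$) maps to $(0,0,\ell)$. Two powers of the same generator have equal image only if their exponents agree modulo the relevant order, i.e. only if they are equal; and a nontrivial power of one generator can never share the image of a nontrivial power of a different generator, since their images are supported on different coordinate axes of $A$. Thus all $k+n+m-3$ images are distinct, and because abelianization identifies conjugate elements, no two of the listed elements are conjugate in $J(k,n,m)$.

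Finally, for the counting statement, I would invoke Definition \ref{reflections}: every reflection of $J(k,n,m)$ is by construction a conjugate of some nontrivial power $s^i$, $t^j$ or $u^\ell$, so the listed $k+n+m-3$ elements meet every conjugacy class of reflections; having just shown them to be pairwise non-conjugate, they represent exactly $k+n+m-3$ distinct classes. I do not anticipate a serious obstacle here: the whole argument rests on the single observation that abelianizing kills precisely the relations coupling $s,t,u$, so the only point requiring care is confirming that $s,t,u$ retain their full orders $k,n,m$, which the computation of $A$ supplies for free.
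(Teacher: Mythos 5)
Your argument is correct, and it coincides with the standard proof: the paper does not actually prove this lemma itself but imports it from \cite[Lemma 2.5]{Gobet Toric}, whose proof is precisely this abelianization argument (the abelianization of $J(k,n,m)$ is $\Z/k\times\Z/n\times\Z/m$ with $s,t,u$ going to the coordinate generators, conjugate elements have equal images, and the listed powers have pairwise distinct nonzero images). Your additional observations --- that this also pins down the exact orders of $s,t,u$, and that the class count follows from Definition \ref{reflections} --- are exactly the points the source relies on, so there is nothing to add.
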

\begin{lemma}[{\cite[Lemma 2.6]{Gobet Toric}}]\label{RefHinG}
Let $H$ be a $J$-group and $G$ its parent $J$-group. Then $R(H)=R(G)\cap H$.
\end{lemma}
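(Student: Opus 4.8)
The plan is to prove the two inclusions separately, with $R(H) \subseteq R(G) \cap H$ being the routine one. First I would note that $R(H) \subseteq H$: every element of $R(H)$ is a $G$-conjugate of a power of one of $s^{k'}, t^{n'}, u^{m'}$, each of which lies in the normal subgroup $H$, so all such powers and all their $G$-conjugates remain in $H$. I would then observe that $R(H) \subseteq R(G)$, since a non-trivial power $(s^{k'})^j = s^{k'j}$ is in particular a non-trivial power of $s$ (and similarly for $t$ and $u$); hence every element of $R(H)$ is a $G$-conjugate of a non-trivial power of one of $s,t,u$, that is, an element of $R(G)$.

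For the reverse inclusion $R(G) \cap H \subseteq R(H)$, I would take $x \in R(G) \cap H$ and, using the symmetry under permuting $s,t,u$ (Remark~\ref{isopermute}), reduce to the case $x = g s^i g^{-1}$ for some $g \in G$ and some exponent $i$ with $s^i \ne 1$ (the cases of $t$ and $u$ being identical). Since $H$ is normal in $G$ and $x \in H$, conjugating back yields $s^i = g^{-1} x g \in H$, so the whole question reduces to identifying which powers of $s$ lie in $H$, i.e. to computing $\langle s \rangle \cap H$.

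The key step would be to show $\langle s \rangle \cap H = \langle s^{k'}\rangle$, the inclusion $\supseteq$ being clear. For $\subseteq$, I would introduce the homomorphism $\psi \colon G \to \Z/k\Z$ determined on generators by $s \mapsto 1$, $t \mapsto 0$, $u \mapsto 0$, checking that it respects the defining relations of $G$ (both sides of $stu = tus = ust$ map to $1$, and $s^k = t^n = u^m = 1$ are clearly preserved). As $\Z/k\Z$ is abelian, $\psi$ sends the normal closure $H$ onto the subgroup generated by $\psi(s^{k'}) = k'$, $\psi(t^{n'}) = 0$ and $\psi(u^{m'}) = 0$, namely $k'\Z/k\Z$. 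Therefore $s^i \in H$ forces $i \equiv 0 \pmod{k'}$, so that $s^i = (s^{k'})^{i/k'}$ is a non-trivial power of $s^{k'}$; this exhibits $x = g s^i g^{-1}$ as an element of $R(H)$ and finishes the argument.

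I expect the only genuine obstacle to be the computation of $\langle s \rangle \cap H$; equivalently, one must know that the image of $s$ has order exactly $k'$ in the quotient $G/H$. The homomorphism $\psi$ (which is just the composite of $G \to G_{ab} \cong \Z/k \times \Z/n \times \Z/m$ with the projection onto the first factor) settles this cleanly, and everything else is bookkeeping with the normality of $H$ and the permutation symmetry of the canonical generators.
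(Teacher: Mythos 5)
Your proof is correct. The paper does not actually prove this lemma --- it is imported verbatim from Gobet as \cite[Lemma 2.6]{Gobet Toric} --- and your argument is exactly the expected one: the homomorphism $\psi$ is the composite $G \to G^{\mathrm{ab}} \cong \Z/k\times\Z/n\times\Z/m \to \Z/k$, under which the normal closure $H$ visibly maps onto $k'\Z/k\Z$, forcing any power of $s$ lying in $H$ to lie in $\langle s^{k'}\rangle$; note (as your argument implicitly and correctly exploits) that no knowledge of the exact order of $s$ in $G$ is needed, only that $s^i\in H$ and $s^i\neq 1$ imply $s^i$ is a non-trivial power of $s^{k'}$.
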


Since $J$-groups are meant to generalise reflection groups, we would like in principle to classify them in a way that keeps track of the reflections. More concretely, there is a notion of reflections of $J$-groups whose morphisms send reflections to reflections:

\begin{definition}[\texorpdfstring{\cite[Definition 2.3]{Gobet Toric}}{}]\label{DefReflectionIso}
Let $H_1$, $H_2$ be two $J$-groups. A group morphism $H_1\xto\varphi H_2$ is a \textbf{reflection morphism} if $\varphi(R(H_1))\subset R(H_2)\cup\{1_{H_2}\}$. Moreover, the groups $H_1$ and $H_2$ are said to be \textbf{isomorphic in reflection} if there exists a group isomorphism $H_1\to H_2$ sending $R(H_1)$ onto $R(H_2)$. In this case, we write $$H_1\cong_{ref} H_2.$$
\end{definition}

\begin{remark}
The group isomorphisms between $J$-groups consisting of permuting the columns are reflection isomorphisms.
\end{remark}

In \cite{Gobet Toric}, Gobet introduced toric reflection groups, which are defined as quotients of toric knot groups, and proved that toric reflection groups are $J$-groups:

\begin{definition}[{\cite[Introduction]{Gobet Toric}}]\label{ToricQuotientKnot}
Let $k,n,m\in \N_{\geq 2}$ and assume that $n$ and $m$ are coprime.
The group $W(k,n,m)$ is the group defined by the presentation 
\begin{equation}\label{PresToric}
\left\langle \begin{array}{c|c}
    x_1,\dots,x_n\, &\,  x_i^k=1\, \forall i\in [n],\,\, x_i\cdots x_{i+m-1}=x_j\cdots x_{j+m-1}\, \forall 1\leq i<j\leq n
\end{array}
\right\rangle,\end{equation}
where indices are taken modulo $n$.
\end{definition}

\begin{theorem}[{\cite[Theorem 2.12]{Gobet Toric}}]\label{ToricPresentation}

Let $k,n,m\in \N_{\geq 2}$ and assume that $n$ and $m$ are coprime. The $J$-group $J\begin{pmatrix} k & n & m\\ &n &m \end{pmatrix}$ is isomorphic to $W(k,n,m)$. 
More precisely, an isomorphism is given by sending $x_i\in W(k,n,m)$ to $t^{i-1}st^{1-i}$ for all $i\in [n]$.
\end{theorem}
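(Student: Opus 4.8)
The plan is to exhibit the assignment $x_i\mapsto g_i:=t^{i-1}st^{1-i}$ as an isomorphism in three steps: first that it defines a homomorphism $\phi\colon W(k,n,m)\to J(k,n,m)$, then that its image is exactly $H:=J\begin{pmatrix} k & n & m\\ & n & m\end{pmatrix}$, and finally that $\phi$ is injective. Two reductions streamline everything. Since $t^n=u^m=1$ already hold in the parent group, the normal closure defining $H$ collapses to $H=\llangle s\rrangle$. Next I would note that $stu=tus=ust$ says precisely that $s$ commutes with $tu$, that $t$ commutes with $us$, and that $u$ commutes with $st$; from these a three-line computation shows that $z:=stu$ is central, with $st=zu^{-1}$ and, using $u^m=1$, also $z^m=(st)^m$. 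I will use these facts freely.

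For well-definedness I would verify the relations of $W(k,n,m)$ on the $g_i$. The order relations $g_i^k=t^{i-1}s^kt^{1-i}=1$ are immediate and $g_{i+n}=g_i$ follows from $t^n=1$, so the cyclic indexing is respected. The only real content is the equality of the length-$m$ products: conjugating by $t^{i-1}$ gives $g_i\cdots g_{i+m-1}=t^{i-1}Q_1t^{1-i}$ with $Q_1:=g_1\cdots g_m$, so these all agree once $Q_1$ commutes with $t$. Telescoping gives $Q_1=(st)^{m-1}s\,t^{1-m}$; rewriting $(st)^{m-1}s=z^{m-1}(us)$ (via $st=zu^{-1}$ and $u^{1-m}=u$) yields $Q_1=z^{m-1}(us)\,t^{1-m}$, and since $z$ is central and $t$ commutes with $us$ one gets $tQ_1t^{-1}=Q_1$.

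Surjectivity onto $H$ I would obtain by showing $\phi(W)=\langle g_i\rangle$ is normal in $J(k,n,m)$. Conjugation by $t$ sends $g_i\mapsto g_{i+1}$ and so permutes the generators; conjugation by $s=g_1$ visibly preserves $\langle g_i\rangle$; and writing $u=t^{-1}s^{-1}z$ with $z$ central gives $ug_iu^{-1}=t^{-1}(s^{-1}g_is)t$, which again lies in $\langle g_i\rangle$ since that subgroup is stable under conjugation by $s$ and by $t^{\pm1}$. Hence $\phi(W)$ is normal, contained in $H=\llangle s\rrangle$, and contains $s$, so $\phi(W)=H$.

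The main obstacle is injectivity, which I would settle by constructing an inverse. Using centrality of $z$ and $z^m=(st)^m$, the parent group rewrites as $J(k,n,m)\cong\langle s,t,z\mid s^k=t^n=1,\ z\text{ central},\ z^m=(st)^m\rangle$. Writing $\delta:=x_1\cdots x_m$ and letting $a$ be an inverse of $m$ modulo $n$, I would build, intrinsically from $W:=W(k,n,m)$, the central extension $\Gamma:=(W\times\Z)/\langle(\delta^{an},-man)\rangle$, in which a central generator $\zeta$ satisfies $\zeta^{man}=\delta^{an}$ and in which $W$ manifestly embeds because the $\Z$-coordinate detects it. One then checks that $x_i\mapsto g_i,\ \zeta\mapsto z$ defines $\Phi\colon\Gamma\to J(k,n,m)$ and that $s\mapsto x_1,\ t\mapsto\delta^{-a}\zeta^{ma},\ z\mapsto\zeta$ defines a two-sided inverse; the one nontrivial point is the relation $\zeta^{man}=\delta^{an}$, which transports to $z^{man}=\phi(\delta)^{an}$ and holds because $\phi(\delta)=z^mt^{-m}$ and $t^{man}=1$. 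Then $\Phi$ is an isomorphism, so $\phi$, being the composite of the embedding $W\hookrightarrow\Gamma$ with $\Phi$ (and carrying $W$ onto $\langle g_i\rangle=H$), is injective. I expect this to be the hard part: the relation $z^m=(st)^m$ forces $z$ to have infinite order in general, so the naive target $W\rtimes\Z/n$ built from the shift automorphism is too small, and one is obliged to introduce the infinite-order central $\zeta$ and check the compatibility relation by hand. A more mechanical alternative would be to run Reidemeister–Schreier on the index-$nm$ subgroup $H\le J(k,n,m)$ and match the output with the presentation of $W(k,n,m)$.
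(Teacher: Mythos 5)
The paper does not actually prove this statement: it is imported verbatim from Gobet's work, and the closest argument in the present paper is the proof of the more general Theorem \ref{Pres2}, which proceeds by Reidemeister--Schreier rewriting --- i.e.\ the ``mechanical alternative'' you mention in your last sentence. Your first two steps are correct and well argued: the verification that $x_i\mapsto g_i:=t^{i-1}st^{1-i}$ respects the relations of $W(k,n,m)$ (via $tQ_1t^{-1}=Q_1$), and the identification of the image with $\llangle s\rrangle=J\begin{pmatrix}k&n&m\\&n&m\end{pmatrix}$ by normality.

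The gap is in the injectivity step. Since $J(k,n,m)=H\cdot\langle z\rangle$ with $z$ central and $H\cap\langle z\rangle=\langle z^{nm}\rangle=\langle\delta^{n}\rangle$ (indeed $z^{nm}=(x_1\cdots x_n)^m=\delta^n$), the correct central amalgam is $(W\times\Z)/\langle(\delta^{n},-mn)\rangle$. Your $\Gamma=(W\times\Z)/\langle(\delta^{an},-man)\rangle$ imposes only the $a$-th power of that identification, so it surjects onto $J(k,n,m)$ with central kernel $\langle\delta^{-n}\zeta^{mn}\rangle\cong\Z/a$; in particular $\Phi$ is \emph{not} an isomorphism when $a>1$. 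Concretely, your inverse assignment fails to respect $z^m=(st)^m$: it requires $(x_1\delta^{-a})^m=\zeta^{m(1-ma)}$ in $\Gamma$, and comparing $\Z$-coordinates forces $man\mid m(ma-1)$, i.e.\ $a\mid ma-1$, i.e.\ $a=1$; no representative of $m^{-1}$ modulo $n$ fixes this (try $n=3$, $m=2$). The construction is salvageable: with the amalgam over $(\delta^{n},-mn)$ the needed relation becomes $(x_1\delta^{-a})^m=\delta^{1-ma}$, an identity \emph{inside} $W(k,n,m)$ which does follow from the presentation --- the relations give $\delta x_{j+m}\delta^{-1}=x_j$, hence $\delta^{-a}x_j\delta^{a}=x_{j+am}=x_{j+1}$, and the product telescopes to $x_1x_2\cdots x_m\delta^{-ma}=\delta^{1-ma}$ --- but this is exactly the nontrivial verification your write-up omits, while the one you do check ($\zeta^{man}=\delta^{an}$ transporting to $z^{man}=\phi(\delta)^{an}$) is the easy direction. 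You would also need to justify, from the presentation of $W$ alone, that $\delta^{n}$ is central (so that the amalgam is well defined); this again follows from $\delta^{-1}x_j\delta=x_{j+m}$ and $\gcd(n,m)=1$, but should be said.
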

\begin{notation}
From now on, we denote the group $J\begin{pmatrix} k & n & m\\ &n &m \end{pmatrix}$ by $W(k,n,m)$ and call such at group a \textbf{toric reflection group}, following conventions from \cite{Gobet Toric}.
\end{notation}

In \cite{Gobet Toric}, in addition to Theorem \ref{ToricPresentation}, Gobet determines the center of toric reflection groups and classifies them up to reflection isomorphisms. In the next sections, we state and extend these results to a wider class of $J$-groups, that we call $J$-reflection groups:
\begin{definition}
Let $k,b,n,c,m\in\N^*$ with $k,bn,cm\geq 2$ and assume $n\wedge m=1$. Define $W_b^c(k,bn,cm)$ to be the $J$-group $J\begin{pmatrix} k & bn & cm\\ &n &m \end{pmatrix}$. A $J$-group of this form is called a \textbf{$J$-reflection group}.
\end{definition}

\begin{remark}
One can see that toric reflection groups are $J$-reflection groups, setting the parameters $b$ and $c$ to be equal to $1$ in the above definition. Moreover, using the correspondence of Theorem \ref{Classification1} and looking at Table 1 of \cite{AA}, every rank two complex reflection group is a $J$-reflection group.
\end{remark}

\begin{notation}
Whenever $b$ or $c$ is equal to $1$ we omit it from the notation so that we write $W_1^1(k,n,m)$ as $W(k,n,m)$, $W_1^c(k,n,cm)$ as $W^c(k,n,cm)$ and $W_b^1(k,bn,m)$ as $W_b(k,bn,m)$.
\end{notation}


\subsection{Reidemeister-Schreier Algorithm for cyclic quotients}
Given a group $G$ defined by a finite presentation $\langle S \, | \, R \rangle$ and a subgroup $H$ of $G$ of finite index, the Reidemeister-Schreier algorithm (see \cite[Section 2.3]{MKS}) provides a way to compute a finite presentation of $H$. In this subsection, we explain the Reidemeister-Schreier algorithm in the case where $H$ is a normal subgroup of $G$ and $G/H$ is a finite cyclic group. Finally, under some additional assumptions, we give an explicit presentation of $H$, which we use in Section 2.3.
\begin{definition}
Let $G$ be a group with finite presentation $\langle S\, | \, R\rangle$ and $H$ be a subgroup of $G$ of finite index $n$. A subset $\{s_1,\dots,s_n\}\subset (S\cup S^{-1})^*$ is a \textbf{Schreier transversal} for $H$ in $G$ if it is a system of representatives of right cosets of $H$ in $G$ that is stable by prefixes. 
\end{definition}

If $H=\llangle S\backslash \{s\}, s^k\rrangle_G$ for some $s\in S$ and $G/H=\{1,s,\dots,s^{k-1}\}\cong \Z/k$, a Schreier Transversal for $H$ in $G$ is $\{1,s,\dots,s^{k-1}\}$. In this case, the Reidemeister-Schreier algorithm applies as follows: Consider the alphabet $\sqcup_{t\in S}\{t_i\}_{i=0,\dots,k-1}$. From now on, we consider indices modulo $k$.\\
Let then $r=r_1^{\epsilon_1}\dots r_l^{\epsilon_l}\in (S\cup S^{-1})^*$ with $(r_i,\epsilon_i)\in S\times\{1,-1\}$. We define $\tau(r)$ by the inductive formula
\begin{equation}\label{TauRS}
    \tau(r_1^{\epsilon_1}\dots r_i^{\epsilon_i})=\begin{cases} \begin{cases}(r_{1})_0 \, \text{if $i=1$ and $\epsilon_1=1$,}\\  
    (r_{1})_{-\delta_{r_1,s}}^{-1} \, \text{if $i=1$ and $\epsilon_1=-1$,}\end{cases}
    \\ \begin{cases}\tau(r_1^{\epsilon_1}\dots r_{i-1}^{\epsilon_{i-1}})(r_{i})_{\lambda(i)} \, \text{if $i>1$ and $\epsilon_i=1$},\\
    \tau(r_1^{\epsilon_1}\dots r_{i-1}^{\epsilon_{i-1}})(r_{i})_{\lambda(i)-\delta_{r_i,s}}^{-1} \, \text{if $i>1$ and $\epsilon_i=-1$,}
    \end{cases}
    \end{cases}
\end{equation}
where $\lambda(i)$ is defined as $\sum\limits_{p\leq i-1,\, r_p=s}\epsilon_p$ and $\delta_{i,j}$ is the Kronecker delta (on words).
\begin{proposition}[{Reformulation of \cite[Theorem 2.9]{MKS}}]\label{Algo}
With the above setting, the group $H$ admits the presentation 
\begin{equation}\label{SubRS}
    \left\langle \begin{array}{c|c}
    \{t_j\}_{t\in S, j\in \llbracket 0,k-1\rrbracket} \, &\,s_i=1 \, \forall i\in \llbracket 0,k-2\rrbracket; \, \tau(s^{j}rs^{j})=1 \, \forall r\in R, j\in \llbracket 0,k-1\rrbracket
    \end{array}
    \right\rangle.
\end{equation}
Moreover, in terms of the elements of $G$, we have $t_j=s^{j}ts^{-j}$ for all $t\in S\backslash\{s\}$ and $j\in \llbracket 0,k-1\rrbracket$. Finally, we have $s_j=1$ for all $j\in \llbracket 0,k-2\rrbracket$ and $s_{k-1}=s^k$.
\end{proposition}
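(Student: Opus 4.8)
The plan is to derive the statement by specialising the general Reidemeister--Schreier theorem \cite[Theorem 2.9]{MKS} to the transversal $T=\{1,s,\dots,s^{k-1}\}$ and then making its abstract output fully explicit.

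First I would confirm that $T$ is a Schreier transversal for $H$ in $G$. Since $G/H\cong\Z/k$ is generated by the image $\bar s$ of $s$, with $\bar s^{\,k}=1$ and with every other generator $t\in S\backslash\{s\}$ mapping to the identity, the set $T$ is a system of representatives for the right cosets of $H$; and it is clearly closed under prefixes, as any prefix of $s^j$ is an $s^{j'}$ with $j'\leq j$. This is exactly the input required to apply the theorem.

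Next I would compute the Schreier generators explicitly. For $t\in S$ and $s^j\in T$, the theorem produces a generator $\gamma(s^j,t):=s^j t\,\overline{s^j t}^{\,-1}$, where $\overline{g}$ denotes the representative in $T$ of the coset $Hg$. The essential computational input is that the image of $t$ in $G/H$ is trivial for $t\neq s$, whence $\overline{s^j t}=s^j$ and $\gamma(s^j,t)=s^j t s^{-j}$; this gives the asserted identity $t_j=s^j t s^{-j}$. For $t=s$ one has $\overline{s^j s}=\overline{s^{j+1}}$, so $\gamma(s^j,s)=s^{j+1}\,\overline{s^{j+1}}^{\,-1}$, which equals $1$ when $j\leq k-2$ (then $s^{j+1}\in T$) and equals $s^k$ when $j=k-1$ (then $\overline{s^k}=1$). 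Thus $s_j=1$ for $j\in\llbracket 0,k-2\rrbracket$ and $s_{k-1}=s^k$, as claimed.

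The relations returned by the theorem split into two kinds: the relations declaring the trivial generators trivial, which reproduce $s_i=1$ for $i\in\llbracket 0,k-2\rrbracket$; and the rewritten relators $\tau(s^j r s^{-j})=1$ for $r\in R$ and $s^j\in T$, obtained by applying the Reidemeister rewriting process to the conjugates of the defining relators. It then remains to check that the recursive formula \eqref{TauRS} is precisely this rewriting process. I expect this to be the only delicate step, and it rests on a single observation: when reading a word $r_1^{\epsilon_1}\cdots r_l^{\epsilon_l}$, the coset of the prefix ending just before position $i$ is $s^{\lambda(i)}$ with indices taken modulo $k$, since only the letters $s^{\pm1}$ change the coset and $\lambda(i)=\sum_{p\leq i-1,\,r_p=s}\epsilon_p$ is exactly their signed count. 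Granting this, a positive letter $r_i$ contributes the generator $\gamma(s^{\lambda(i)},r_i)=(r_i)_{\lambda(i)}$, while a negative letter $r_i^{-1}$ contributes $\gamma\bigl(\overline{s^{\lambda(i)}r_i^{-1}},r_i\bigr)^{-1}$, where $\overline{s^{\lambda(i)}r_i^{-1}}=s^{\lambda(i)-\delta_{r_i,s}}$ accounts for the Kronecker correction $-\delta_{r_i,s}$ in the inverse case; the base case $i=1$ is the same computation with the empty prefix sitting in the coset of $s^0$. Matching these expressions to \eqref{TauRS} letter by letter, together with the bookkeeping of indices modulo $k$, identifies $\tau$ with the Reidemeister rewriting process and completes the proof.
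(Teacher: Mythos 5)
Your proposal is correct and follows exactly the route the paper intends: the paper states this proposition without proof as a direct specialisation of the Reidemeister--Schreier theorem to the Schreier transversal $\{1,s,\dots,s^{k-1}\}$, and your verification of the transversal property, the computation of the Schreier generators $\gamma(s^j,t)$, and the identification of $\tau$ with the Reidemeister rewriting process (including the $-\delta_{r_i,s}$ shift for negative letters) is precisely that specialisation carried out in full. Note only that you correctly read the relators as $\tau(s^j r s^{-j})$, consistent with Equation \eqref{tau1} later in the paper, whereas the displayed statement \eqref{SubRS} has a typo ($s^j r s^j$).
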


In the case where $r\in R$ is of the form $t_{i_1}\cdots t_{i_k}t_{j_l}^{-1}\cdots t_{j_1}^{-1}$ with $t_{i_p},t_{j_p}\in S$ for all $p$ and the number of occurrences of $s$ in $(t_{i_1},\dots,t_{i_k})$ is the same as in $(t_{j_1},\dots,t_{j_l})$, Presentation \eqref{SubRS} can be simplified as follows:\\
Write $a$ to be the number of occurrences of $s$ in $(t_{i_1},\dots,t_{i_k})$. Then, in $\tau(s^prs^{-p})$, the index of $t_{i_1}$ is $p$, the index of $t_{i_2}$ is $p+\delta_{t_{i_1},s}$ and more generally the index of $t_{i_q}$ is $p+\sum_{b=1}^{q-1}\delta_{t_{i_b},s}$. Similarly, since $\sum_{b=1}^{k}\delta_{t_{i_b},s}=a$ by assumption, the index of $(t_{j_l})^{-1}$ is $p+a-\delta_{j_l,s}$ and more generally the index of $(t_{j_{l-q}})^{-1}$ is $p+a-\sum_{b=0}^q \delta_{t_{j_{l-b}},s}$. In particular, since $\sum_{b=0}^{l-1} \delta_{t_{j_{l-b}},s}=a$, the index of $(t_{j_1})^{-1}$ is $p$. Finally, observe that $a-\sum_{b=0}^q \delta_{t_{j_{l-b}},s}=\sum_{b=q+1}^{l-1} \delta_{t_{j_{l-b}},s}=\sum_{b=1}^{l-q-1} \delta_{t_{j_b},s}$. In particular, the index of $(t_{j_q})^{-1}$ is $p+\sum_{b=1}^{q-1}\delta_{t_{j_b},s}$. We thus get that $\tau(s^prs^{-p})$ is equal to 
\begin{equation}\label{tau1}
s_0\cdots s_{p-1}(t_{i_1})_p(t_{i_2})_{p+\delta_{t_{i_1},s}}\cdots (t_{i_k})_{p+\sum_{b=1}^{k-1}\delta_{t_{i_b},s}}(t_{j_{l}})^{-1}_{p+\sum_{b=1}^{l-1}\delta_{t_{j_b},s}}\cdots (t_{j_1})_p^{-1}s_{p-1}^{-1}\cdots s_0^{-1}.
\end{equation}
Using Equation \eqref{tau1}, the relation $\tau(s^prs^{-p})=1$ is then equivalent to the relation 
\begin{equation}\label{tau2}
(t_{i_1})_p(t_{i_2})_{p+\delta_{t_{i_1},s}}\cdots (t_{i_k})_{p+\sum_{b=1}^{k-1}\delta_{t_{i_b},s}}=(t_{j_1})_p(t_{j_2})_{p+\delta_{t_{j_1},s}}\cdots (t_{j_l})_{p+\sum_{b=1}^{l-1}\delta_{t_{j_b},s}}.
\end{equation}

While this description might be a bit counter intuitive at first it is actually quite simple, and we illustrate it with the following example:
\begin{example}
Let $G=\Z^2$ be presented by $\langle a,b\, | \, ab=ba\rangle$. Consider the subgroup $H=\langle a^n,b\rangle_G$. We thus have $G/H=\{1,a,\dots,a^{n-1}\}\cong \Z/n$, and combining Proposition \ref{Algo} with Equation \eqref{tau2}, a presentation for $H$ is the following:
\begin{equation*}
    \left\langle \begin{array}{c|c}
   a_0,\dots,a_{n-1}\, &\, a_i=1 \, \forall i\in \llbracket 0,n-2\rrbracket\\
   b_0,\dots,b_{n-1} \, & a_ib_{i+1}=b_ia_i \, \forall i=0,\dots,n-1
    \end{array}
    \right\rangle.
\end{equation*}
For $i=0,\dots,n-2$, the relation $a_ib_{i+1}=b_ia_i$ combined with $a_i=1$ gives $b_{i+1}=b_i$. Moreover, the relation $a_{n-1}b_0=b_{n-1}a_{n-1}$ yields $a_{n-1}b_0=b_0a_{n-1}$. This shows that $H$ admits the presentation
\begin{equation*}
    \left\langle \begin{array}{c|c}
    a_{n-1},b_0\, & \, a_{n-1}b_0=b_0a_{n-1}
    \end{array}
    \right\rangle,
\end{equation*}
and that in terms of the canonical generators $a$ and $b$ of $G$, we have $a_{n-1}=a^n$ and $b_0=b$.
\end{example}

\subsection{\texorpdfstring{Presentations of $J$-reflection groups}{}}
In \cite{BMR} Broué, Malle \& Rouquier give presentations for all complex reflection groups (see \cite[Tables 1-5]{BMR}) where the generators represent complex reflections. We refer to these presentations as the $\textbf{BMR}$ $\textbf{presentations}$.

The goal of this subsection is to give presentations of $J$-reflection groups that agree with the BMR presentation whenever the group is finite. A first step in this direction was made in \cite{Gobet Toric} by Gobet:\\
Recall from Theorem \ref{ToricPresentation} that given $k,n,m\in \N_{\geq 2}$ with $n$ and $m$ coprime, the toric reflection group $W(k,n,m)$ is isomorphic to the $J$-group $J\begin{pmatrix} k & n &m \\ & n & m\end{pmatrix}$. In particular, we have a presentation for all $J$-groups of the form $J\begin{pmatrix} k & n & m\\ &n &m \end{pmatrix}$.\\
\noindent
Whenever $W(k,n,m)$ is finite, Presentation \eqref{PresToric} agrees with the BMR presentation of the corresponding complex reflection group. Moreover, in \cite{Gobet Toric} Gobet characterises reflections of toric reflection groups in terms of the $x_i$'s:

\begin{proposition}[{\cite[Proposition 2.9]{Gobet Toric}}]\label{ToricReflections}
Let $k,n,m\in \N_{\geq 2}$ with $n$ and $m$ coprime. Every reflection of $W(k,n,m)$ is conjugate in $W(k,n,m)$ (and not only in $J(k,n,m)$) to a non trivial power of some $x_i$.
\end{proposition}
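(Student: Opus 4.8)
Since here $W:=W(k,n,m)=J\begin{pmatrix} k & n & m\\ &n &m \end{pmatrix}=\llangle s\rrangle_{J(k,n,m)}$ (the parameters being $k'=1$, $n'=n$, $m'=m$, with $t^n=u^m=1$ in the parent group), Definition \ref{reflections} tells us that $R(W)$ is exactly the set of $J(k,n,m)$-conjugates of the non-trivial powers of $s=x_1$. Writing $G:=J(k,n,m)$, the issue is thus that a reflection has the form $gs^jg^{-1}$ with $g\in G$ possibly \emph{outside} $W$, and we must upgrade this to a conjugacy inside $W$. The plan is to control the quotient $G/W$: killing $s$ in presentation \eqref{PresParent} shows that $G/W$ is the abelian group generated by the images of $t$ and $u$ with $\bar t^n=\bar u^m=1$. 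Hence any $g\in G$ can be written $g=w\,t^au^b$ with $w\in W$ (using that $W\trianglelefteq G$), so that $gs^jg^{-1}$ is $W$-conjugate to $t^au^bs^ju^{-b}t^{-a}$.

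\textbf{Reduction and the key tool.} Conjugation by $t^a$ sends $x_i$ to $x_{i+a}$ (indices mod $n$, since $t^ax_it^{-a}=x_{i+a}$) and preserves $W$ by normality; therefore it suffices to prove that $u^bs^ju^{-b}$ is $W$-conjugate to a power of some $x_i$. The crucial observation I would establish first is that $z:=stu=tus=ust$ is \emph{central} in $G$: the three defining equalities are precisely the commutations $[s,tu]=[t,us]=[u,st]=1$, from which one checks that $z$ commutes with each of $s,t,u$. Since $u=t^{-1}s^{-1}z$ with $z$ central, conjugation by $u^b$ agrees with conjugation by $(st)^{-b}$, giving $u^bs^ju^{-b}=(st)^{-b}s^j(st)^b$, an expression no longer involving $u$.

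\textbf{Final computation and the main obstacle.} A telescoping computation using $t^ist^{-i}=x_{i+1}$ gives $(st)^b=x_1x_2\cdots x_b\,t^b$, so with $s^j=x_1^j$ one rewrites $u^bs^ju^{-b}=t^{-b}\big((x_1\cdots x_b)^{-1}x_1^j(x_1\cdots x_b)\big)t^b$. The bracketed element lies in $W$ and is a $W$-conjugate of $x_1^j$; conjugating by $t^{-b}$ (which preserves $W$ and carries $x_1$ to $x_{1-b}$) exhibits $u^bs^ju^{-b}$ as a $W$-conjugate of $x_{1-b}^j$. Tracing back through the reduction, $gs^jg^{-1}$ is $W$-conjugate to $x_{a-b+1}^j$, a non-trivial power of a single generator. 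I expect the main obstacle to be exactly the presence of $u\notin W$: the centrality of $z=stu$ is what allows us to trade conjugation by $u$ for conjugation by a power of $s$ (inside $W$) together with the cyclic shift $t$, and the only delicate point is the careful bookkeeping of the indices modulo $n$.
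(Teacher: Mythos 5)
Your proof is correct, and all the individual steps check out: the identification $R(W)=\{gs^jg^{-1}: g\in G,\ s^j\neq 1\}$, the centrality of $stu$ (which indeed follows from reading the defining relations as $[s,tu]=[t,us]=[u,st]=1$), the trade of $u^b$-conjugation for $(st)^{-b}$-conjugation, and the telescoping $(st)^b=x_1\cdots x_b t^b$. Note, however, that the paper does not reprove this proposition; it is quoted from \cite[Proposition 2.9]{Gobet Toric}, and the argument actually carried out in this paper that is closest to yours is the proof of Lemma \ref{GeneratorsFINDANAME}, which generalises the statement to $W_b^c(k,bn,cm)$. That proof (like the cited one) proceeds by induction on the length of $g$ as a word in $s^{\pm1},t^{\pm1},u^{\pm1}$, treating conjugation by each single letter via the same centrality trick ($uyu^{-1}=t^{-1}s^{-1}(stu)yu^{-1}=\cdots$). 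Your route replaces that induction by the observation that $G/W\cong\Z/n\times\Z/m$ is abelian with a normal form $g=w\,t^au^b$, $w\in W$, which collapses the whole reduction into a single closed-form computation and even tells you exactly which generator you land on, namely $x_{a-b+1}^j$. The induction is more local and extends with no change to the situation of Remark \ref{nmNotCoprime} and to the extra generators $y,z$ in Lemma \ref{GeneratorsFINDANAME}; your version is shorter and more transparent for this particular statement, at the cost of first computing the quotient $G/W$. Both are valid, and your bookkeeping of the indices modulo $n$ is consistent throughout.
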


In fact, Theorem \ref{ToricPresentation} and Proposition \ref{ToricReflections} extend slightly outside the scope of $J$-groups in the sense of Achar \& Aubert, as stated in the following remarks:
\begin{remark}\label{nmNotCoprime}
In the proofs of Theorem \ref{ToricPresentation} \cite[Proof of Theorem 2.12]{Gobet Toric} and Proposition \ref{ToricReflections} \cite[Proof of Proposition 2.9]{Gobet Toric}, the fact that $n$ and $m$ are coprime is not used. In particular, Theorem \ref{ToricPresentation} and Proposition \ref{ToricReflections} hold for the group $W(k,n,m):=\llangle s\rrangle_{J(k,n,m)}$ even if $n$ and $m$ are not coprime. Note that in this case $W(k,n,m)$ is not a $J$-group, nor a toric reflection group.
\end{remark}
\begin{remark}\label{ToricParameters=1}
Allowing $n$ or $m$ to be equal to $1$ in the definition of $J(k,n,m)$ includes the product of finite cyclic groups in the notation $J(k,n,m)$. In this case $\llangle s\rrangle_{J(k,n,m)}$ is equal to $\Z/k$. Moreover, in Presentation \eqref{PresToric}, allowing $n$ or $m$ to be equal to $1$ includes the group $\Z/k$ in the groups with notation $W(k,n,m)$. In particular, with this notation we still have $W(k,n,m)\cong \llangle s\rrangle_{J(k,n,m)}$ for $n$ or $m$ being equal to $1$. We \textit{do not} call this group a toric reflection group, but it will be useful for the next section to allow the notation $W(k,1,m)$ or $W(k,n,1)$ to talk about $\Z/k$. Moreover, with this notation Diagram \eqref{CommSquareFINDANAME} is still valid.
\end{remark}

With this enlarged notation, we prove a lemma which will be useful in the classification of $J$-reflection groups in terms of reflection isomorphisms:

\begin{lemma}\label{ToricCyclic}
For $n,m\in \N^*$ with $n\wedge m=1$ and $k\in \N_{\geq 2}$, the group $W(k,n,m)$ is cyclic if and only if $1\in \{n,m\}$, in which case it is isomorphic to $\Z/k$. 
\end{lemma}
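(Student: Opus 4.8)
The plan is to prove the two implications separately, the forward one being immediate and the converse carrying all the content. If $1\in\{n,m\}$, then Remark \ref{ToricParameters=1} already identifies $W(k,n,m)$ with $\Z/k$, which is cyclic; this also gives the ``in which case'' clause. For the converse I would argue by contraposition: since $n$ and $m$ are coprime, the only remaining case is $n,m\geq 2$, and I want to show that then $W(k,n,m)$ is not cyclic. As a cyclic group is abelian, it suffices to prove that $W(k,n,m)$ is non-abelian. (It is worth recording that the relations below force the abelianisation of $W(k,n,m)$ to be $\Z/k$ regardless of $n,m$, so ``cyclic'', ``$\cong\Z/k$'' and ``abelian'' are in fact equivalent for these groups; but only the abelian/non-abelian dichotomy is needed.)

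Next I would reduce non-abelianness to a commutation relation inside the parent group. Using the presentation of $W(k,n,m)$ from Theorem \ref{ToricPresentation}, which is available here by Remark \ref{nmNotCoprime}, suppose toward a contradiction that $W(k,n,m)$ is abelian. In the relation $x_i\cdots x_{i+m-1}=x_{i+1}\cdots x_{i+m}$ one may then cancel the common factor $x_{i+1}\cdots x_{i+m-1}$, obtaining $x_i=x_{i+m}$ for every $i$ (indices mod $n$). Since $n\wedge m=1$, the shift $i\mapsto i+m$ acts transitively on $\Z/n$, so all generators $x_i$ coincide. Under the isomorphism of Theorem \ref{ToricPresentation} one has $x_1=s$ and $x_2=tst^{-1}$, and because $n\geq 2$ the element $t$ is nontrivial (Lemma \ref{NoConjRef}); hence $x_1=x_2$ reads $s=tst^{-1}$ in $J(k,n,m)$, i.e. $s$ and $t$ commute.

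Finally I would contradict this commutation by passing to a well-understood quotient of $J(k,n,m)$. Consider $Q:=J(k,n,m)/\llangle stu\rrangle_{J(k,n,m)}$. Imposing $stu=1$ also forces $tus=ust=1$ (these equal $stu$) and permits eliminating $u=t^{-1}s^{-1}$; the defining relations then reduce to $s^k=t^n=(st)^m=1$, so that $Q$ is the ordinary triangle (von Dyck) group $\langle a,b\mid a^k=b^n=(ab)^m=1\rangle$, with $s\mapsto a$ and $t\mapsto b$. Were $s$ and $t$ to commute in $J(k,n,m)$, their images $a,b$ would commute and $Q$ would be abelian. The main obstacle is therefore the purely classical point that the von Dyck group $\langle a,b\mid a^k=b^n=(ab)^m=1\rangle$ is non-abelian for all $k,n,m\geq 2$: the unique abelian instance among parameters $\geq 2$ is $(2,2,2)$, which is excluded here because $n\wedge m=1$ rules out $n=m=2$. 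I expect to dispose of this by invoking the standard realisation of triangle groups as discrete isometry groups of the sphere, Euclidean plane, or hyperbolic plane, in which $a$, $b$ and $ab$ have respective orders exactly $k$, $n$, $m$ and manifestly do not commute (see e.g.\ the classical references on generators and relations for discrete groups). Granting this, $Q$ cannot be abelian, contradicting the commutation of $s$ and $t$; hence $W(k,n,m)$ is non-abelian and a fortiori not cyclic, which completes the converse.
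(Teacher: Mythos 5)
Your proof is correct, but it takes a genuinely different route from the paper. The paper also handles the direction $1\in\{n,m\}$ via Remark \ref{ToricParameters=1}, but for the converse it splits into cases: if $W(k,n,m)$ is finite it is, by Table 1 of \cite{AA}, an irreducible rank two complex reflection group, none of which are cyclic; if it is infinite it cannot be $\Z$ because it has torsion. You instead give a uniform, classification-free argument: cyclic implies abelian, abelianness collapses all the $x_i$ via the relations of Presentation \eqref{PresToric} together with $n\wedge m=1$, the equality $x_1=x_2$ reads $s=tst^{-1}$ in $J(k,n,m)$, and this is contradicted by the non-abelianness of the von Dyck quotient $J(k,n,m)/\llangle stu\rrangle\cong\langle a,b\mid a^k=b^n=(ab)^m=1\rangle$ for $k,n,m\geq 2$ with $(k,n,m)\neq(2,2,2)$. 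All the steps check out (the cancellation $x_i=x_{i+m}$ is legitimate in an abelian group, the transitivity of $i\mapsto i+m$ on $\Z/n$ uses coprimality, and $n\wedge m=1$ excludes the one abelian triangle group); the aside about $t$ being nontrivial is not actually needed, since $s=tst^{-1}$ yields $st=ts$ regardless. Your approach proves the stronger statement that $W(k,n,m)$ is non-abelian for $n,m\geq 2$ and avoids invoking the Shephard--Todd classification, at the cost of importing the classical non-abelianness of von Dyck groups; note that this last input could be kept internal to the paper, since $J(k,n,m)/\langle stu\rangle\cong W^+_{k,n,m}$ by Theorem \ref{CenterToric} and Remark \ref{CenterNotPrime}, and Proposition \ref{CenterAlternating} (applicable because $n\wedge m=1$ rules out $k,n,m$ all even) gives that this nontrivial group has trivial center, hence is non-abelian. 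The paper's argument is shorter given what it already has on hand; yours is self-contained at the level of presentations and uniform across the finite and infinite cases.
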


\begin{proof}
If $1\in \{n,m\}$, this is the content of Remark \ref{ToricParameters=1}.\\
Assume now that $n,m\geq 2$. If $W(k,n,m)$ is finite, by inspecting Table 1 of \cite{AA} one sees that it is an irreducible rank two complex reflection group, none of which are cyclic. If $W(k,n,m)$ is infinite, it cannot be isomorphic to $\Z$ since it has torsion (see \cite[Proof of Lemma 2.5]{Gobet Toric}).
\end{proof}

The rest of this subsection is devoted to giving a presentation for all $J$-reflection groups generalising Presentation \eqref{PresToric}. In order to get a presentation for the group $W_b^c(k,bn,cm)$, we need some preliminary results and observations.

\begin{remark}\label{QuotientAbParent}
Let $k,b,n,c,m\in \N^*$ with $k,bn,cm\geq 2$ and $n\wedge m$=1. It follows from the definitions that \begin{equation*}
\begin{aligned}J(k,bn,cm)/W_b^c(k,bn,cm)&\cong\langle s,t,u\,|\,s=t^n=u^m=1,\, \, stu=tus=ust\rangle \\&\cong \Z/n\times \Z/m\cong \Z
/nm\end{aligned}.\end{equation*}
\end{remark}

Proposition \ref{ToricReflections} allows us to find a generating family for $W_b^c(k,bn,cm)$:


\begin{lemma}\label{GeneratorsFINDANAME}
Let $k,b,n,c,m\in \N^*$ be such that $k,bn,cm\geq 2$ and $n\wedge m=1$ and let $s,t,u$ be the canonical generators of $G:=J(k,bn,cm)$. Defining $x_i$ to be $t^{i-1}st^{1-i}$ for all $i=1,\dots,bn$, $y$ to be $t^n$ and $z$ to be $u^m$, the set $\{x_1,\dots,x_n,y,z\}$ forms a generating family for $H:=W_b^c(k,bn,cm)$. Moreover, every reflection of $H$ is conjugate in $H$ (and not only in $G$) to a non trivial power of an element of $\{x_1,\dots,x_n,y,z\}$.
\end{lemma}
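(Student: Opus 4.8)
The plan is to prove both assertions at once by understanding how the normal closure $H=\llangle s,t^n,u^m\rrangle_G$ sits inside $G:=J(k,bn,cm)$, writing $y=t^n$ and $z=u^m$. Set $N:=\langle x_1,\dots,x_n,y,z\rangle$. The inclusion $N\subseteq H$ is immediate, since each $x_i$ is a $G$-conjugate of $s$ and $y,z$ lie among the chosen generators of $H$; the content is the reverse inclusion. First I would record two elementary facts. By Theorem \ref{ToricPresentation} together with Remark \ref{nmNotCoprime} (which removes the coprimality hypothesis on the last two parameters), the subgroup $\llangle s\rrangle_G$ is the toric group $W(k,bn,cm)$, generated by $x_1,\dots,x_{bn}$. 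Next, since $y\,x_i\,y^{-1}=t^{n}x_i t^{-n}=x_{i+n}$ with indices read modulo $bn$, conjugating $x_1,\dots,x_n$ by powers of $y$ produces every $x_i$; hence $N=\langle x_1,\dots,x_{bn},y,z\rangle\supseteq\llangle s\rrangle_G$.

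To obtain $H\subseteq N$ I would show that $N\trianglelefteq G$: as $N$ then contains $s=x_1$, $y$ and $z$ together with all their $G$-conjugates, it contains $\llangle s,y,z\rrangle_G=H$. Normality is checked on generators. Conjugates of the $x_i$ by $s,t,u$ remain in the normal subgroup $\llangle s\rrangle_G\subseteq N$, while $tyt^{-1}=y$ and $uzu^{-1}=z$ are trivial. The remaining conjugations are governed by the braid-type relations $stu=tus=ust$, which rearrange to
\[
tut^{-1}=s^{-1}us,\qquad u^{-1}tu=sts^{-1}.
\]
The first identity gives $tzt^{-1}=s^{-1}zs=x_1^{-1}zx_1\in N$, and the second gives $u^{-1}yu=sys^{-1}=x_1yx_1^{-1}\in N$; combining the second with the fact that $w:=usu^{-1}\in\llangle s\rrangle_G$ yields $uyu^{-1}=w^{-1}yw\in N$, and the inverse conjugations are handled symmetrically. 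This proves $N=H$, which is the first assertion.

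For the statement about reflections, recall that $R(H)$ consists of the $G$-conjugates of non-trivial powers of $s$, $y$ and $z$. A conjugate $g s^a g^{-1}$ is a reflection of $W(k,bn,cm)=\llangle s\rrangle_G$, so by Proposition \ref{ToricReflections} (again via Remark \ref{nmNotCoprime}) it is conjugate inside $\llangle s\rrangle_G\subseteq H$ to a non-trivial power of some $x_i$; conjugating by a suitable power of $y$ replaces $x_i$ by the $x_{i'}$ with $i'\in\{1,\dots,n\}$ and $i'\equiv i \pmod n$, so it is $H$-conjugate to a power of one of $x_1,\dots,x_n$. For the classes of $y$ and $z$ I would use a subgroup trick: for $v\in\{y,z\}$ let $A_v$ be the set of $g\in G$ such that $gvg^{-1}$ is conjugate to $v$ by an element of $H$. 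Using that $H\trianglelefteq G$, one checks directly that $A_v$ is a subgroup of $G$ containing $H$. The identities above show $t,u\in A_v$: for $v=y$ because $tyt^{-1}=y$ and $uyu^{-1}=w^{-1}yw$ is an $H$-conjugate of $y$, and for $v=z$ because $uzu^{-1}=z$ and $tzt^{-1}=x_1^{-1}zx_1$ is an $H$-conjugate of $z$. Hence $A_v=\langle H,t,u\rangle=G$, so every reflection $g v^a g^{-1}=(gvg^{-1})^a$ is $H$-conjugate to $v^a$, a power of $y$ or $z$.

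The main obstacle is the normality verification of the second paragraph: everything hinges on extracting the correct conjugation identities from $stu=tus=ust$ and confirming that all conjugations by $s^{\pm1},t^{\pm1},u^{\pm1}$ carry $y$ and $z$ back into $N$. Once these identities are secured, both the normality argument for the generating set and the subgroup argument $A_v=G$ for the reflection statement follow formally, the remaining ingredients being the toric description of $\llangle s\rrangle_G$ and the index-shifting action of $y$.
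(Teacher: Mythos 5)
Your proof is correct and follows essentially the same route as the paper's: both rest on Proposition \ref{ToricReflections} via Remark \ref{nmNotCoprime} for the conjugates of $s$, on the index shift $yx_iy^{-1}=x_{i+n}$ to pass from $\langle x_1,\dots,x_{bn}\rangle$ to $\langle x_1,\dots,x_n,y\rangle$, and on the same conjugation identities extracted from $stu=tus=ust$ (namely $tzt^{-1}=s^{-1}zs$, $uzu^{-1}=z$, $tyt^{-1}=y$, and $uyu^{-1}$ being a conjugate of $y$ by an element of $\llangle s\rrangle_G$). The only difference is organizational: where the paper runs an induction on the length of the conjugating word to show each generator $ghg^{-1}$ of $H$ is conjugate inside $\langle x_1,\dots,x_n,y,z\rangle$ to one of $x_1,\dots,x_n,y,z$, you repackage the identical closure-under-generators check as normality of $\langle x_1,\dots,x_n,y,z\rangle$ together with the subgroup $A_v$, which is a formally equivalent argument.
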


\begin{proof}
By definition, the group $H$ is generated by elements of the form $ghg^{-1}$ with $h\in \{s,t^n,u^m\}=\{x_1,y,z\}$ and $g\in G$ and every reflection of $H$ is a power of such an element. In particular, it is enough to show that every element of the stated form is conjugate in $\langle x_1,\dots,x_n,y,z\rangle$ to an element of $\{x_1,\dots,x_n,y,z\}$ to conclude both claims of the proof.\\
Let then $r$ be of the form $gsg^{-1}$. In this case, the element $r$ lies in $W(k,bn,cm)$ as defined in Remark \ref{nmNotCoprime}, since $bn$ and $cm$ are not necessarily coprime. It follows from Proposition \ref{ToricReflections} and Remark \ref{nmNotCoprime} that $r$ is conjugate in $W(k,bn,cm)=\langle x_1,\dots,x_{bn}\rangle$ to a non trivial power of some $x_i$ with $i\in [bn]$. For all $i,k\in \N$ we have $x_{i+kn}=y^kx_iy^{-k}$ so that
\begin{equation}\label{GroupInclusion1}
\langle x_1,\dots,x_{bn}\rangle\subset \langle x_1,\dots,x_n,y\rangle,
\end{equation}
thus $r$ is conjugate in $\langle x_1,\dots,x_n,y\rangle$ to a non trivial power of some $x_i$ with $i\in [n]$.\\
Assume now that $r$ is equal to $g_1\cdots g_ly(g_1\cdots g_l)^{-1}$ with $g_1,\dots,g_l\in \{s^{\pm 1},t^{\pm 1},u^{\pm 1}\}$. We prove by induction on $l\in \N^*$ that $r$ is conjugate in $\langle x_1,\dots,x_n,y\rangle$ to $y$. To simplify the proof, observe that $$s^{k-1}=s^{-1},t^{bn-1}=t^{-1}, u^{cm-1}=u^{-1}$$ so that it is enough to show the result for $g_1,\dots,g_l\in \{s,t,u\}$. \\
If $l=1$ the computation is as follows, using that $stu$ is central:
\begin{equation*}
\begin{aligned}
sys^{-1}&=x_1sx_1^{-1},\\
tyt^{-1}&=y,\\
uyu^{-1}&=t^{-1}s^{-1}(stu)yu^{-1}=t^{-1}s^{-1}yst=x_{bn}^{-1}yx_{bn}=y^{b-1}x_n^{-1}yx_ny^{1-b}.
\end{aligned}
\end{equation*}
Assume now that the result is true for $l\in \N^*$. Since every conjugate in $G$ to one of the $x_j$'s is conjugate in $\langle x_1,\dots,x_n,y\rangle$ to a non trivial power of some $x_i$, we get the result for $l+1$.\\
To conclude the proof, by a similar induction argument it is enough to show that every element of $\{szs^{-1},tzt^{-1},uzu^{-1}\}$ is conjugate in $\langle x_1,\dots,x_n,y,z\rangle$ to $z$. Using that $ust$ is central in $G$, we have
\begin{equation*}
\begin{aligned}
szs^{-1}&=x_1zx_1^{-1},\\
tzt^{-1}&=s^{-1}u^{-1}(ust)zt^{-1}=s^{-1}u^{-1}zus=x_1^{-1}zx_1,\\
uzu^{-1}&=z.
\end{aligned}
\end{equation*}
This concludes the proof.
\end{proof}

\begin{corollary}\label{NormalClosureSmall}
Let $k,b,b',n,n',c,c',m,m'\in \N^*$ with $k,bn,cm\geq 2$, $b'n'=n$, $c'm'=m$ and assume that $n\wedge m=1=n'\wedge m'$. Let $s,t,u$ be the canonical generators of $J(k,bn,cm)$. Then $$W_b^c(k,bn,cm)=\llangle x_1,\dots,x_n,t^n,u^m\rrangle_{W_{bb'}^{cc'}(k,bb'n',cc'm')}.$$
\end{corollary}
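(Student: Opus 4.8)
The plan is to prove the two inclusions separately, the crucial preliminary observation being that both groups involved are subgroups of the \emph{same} parent $J$-group. Indeed, since $bb'n'=bn$ and $cc'm'=cm$, we have $J(k,bb'n',cc'm')=J(k,bn,cm)=:G$, so that both $A:=W_b^c(k,bn,cm)=\llangle s,t^n,u^m\rrangle_G$ and $B:=W_{bb'}^{cc'}(k,bb'n',cc'm')=\llangle s,t^{n'},u^{m'}\rrangle_G$ sit inside $G$. Writing $N:=\llangle x_1,\dots,x_n,t^n,u^m\rrangle_B$ for the normal closure to be identified, I want to show $A=N$.

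First I would record that $A$ is a subgroup of $B$ which is moreover normal in $B$. Since $n=b'n'$ and $m=c'm'$, we have $t^n=(t^{n'})^{b'}$ and $u^m=(u^{m'})^{c'}$, so $t^n,u^m\in B$; moreover $s\in B$ and $B$ is normal in $G$, whence each $x_i=t^{i-1}st^{1-i}\in B$. By Lemma \ref{GeneratorsFINDANAME} these elements generate $A$, so $A\subseteq B$. Normality is then immediate: $A$ is a normal subgroup of $G$ by definition (it is a normal closure in $G$), and since $B\subseteq G$, conjugation by any element of $B$ preserves $A$, i.e. $A\trianglelefteq B$.

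For the inclusion $N\subseteq A$, note that each $x_i$ is a conjugate of $s$ in $G$, hence lies in $A=\llangle s,t^n,u^m\rrangle_G$, while $t^n,u^m\in A$ by definition. Thus $A$ is a normal subgroup of $B$ containing all the generators of $N$, and since $N$ is by definition the smallest such subgroup, $N\subseteq A$. For the reverse inclusion $A\subseteq N$, I would invoke Lemma \ref{GeneratorsFINDANAME} once more: it asserts that $\{x_1,\dots,x_n,t^n,u^m\}$ generates $A$ as a group. As all these elements belong to $N$ (a normal closure contains its own generating set), we obtain $A=\langle x_1,\dots,x_n,t^n,u^m\rangle\subseteq N$. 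Combining the two inclusions yields $A=N$, as desired.

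There is no substantial obstacle here; the entire content is bookkeeping with normal closures. The one point requiring care is to keep straight that the defining normal closure for $A$ is taken in the large group $G$, whereas $N$ is a normal closure in the smaller group $B$ — it is precisely the normality $A\trianglelefteq B$, inherited from $A\trianglelefteq G$, that allows the smaller normal closure to recover all of $A$. The genuinely nontrivial input is Lemma \ref{GeneratorsFINDANAME}, which already supplies an honest generating set (and not merely a normal generating set) for $A$; everything else is formal.
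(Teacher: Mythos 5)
Your proof is correct and follows essentially the same route as the paper: the paper likewise observes that $W_b^c(k,bn,cm)$ is normal in the parent $J$-group (hence in $W_{bb'}^{cc'}(k,bb'n',cc'm')$) and then identifies it with the normal closure of the generating set supplied by Lemma \ref{GeneratorsFINDANAME}. You merely unpack the paper's one-line chain of equalities into the two explicit inclusions, and your extra verification that $W_b^c(k,bn,cm)\subseteq W_{bb'}^{cc'}(k,bb'n',cc'm')$ is a welcome detail the paper leaves implicit.
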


\begin{proof}
By definition, the group $W_b^c(k,bn,cm)$ is a normal subgroup of $J(k,bn,cm)$ hence it is normal in $W_{bb'}^{cc'}(k,bb'n',cc'm')$. By Lemma \ref{GeneratorsFINDANAME}, the group $W_b^c(k,bn,cm)$ is generated by $\{x_1,\dots,x_n,t^n,u^m\}$ so that
\begin{equation*}
\begin{aligned}
W_b^c(k,bn,cm)&=\llangle W_b^c(k,bn,cm)\rrangle_{W_{bb'}^{cc'}(k,bb'n',cc'm')}\\&=\llangle x_1,\dots,x_n,t^n,u^m\rrangle_{W_{bb'}^{cc'}(k,bb'n',cc'm')},
\end{aligned}
\end{equation*}
which concludes the proof.
\end{proof}


We are now ready to compute a presentation of $W_b^c(k,bn,cm)$, using Reidemeister-Schreier Algorithm. We separate the computation in several results, starting by proving a useful isomorphism:

\begin{lemma}\label{PresBMR}
Let $n\in \N_{\geq 2}$ and let $G$ be the group with presentation 
\[\left\langle\begin{array}{c|c}
    x_0,\dots,x_{n-1} & x_0x_1=x_ix_{i+1}\, \forall i=1,\dots,n-2  \\
   z & x_0x_1z=x_{n-1}zx_0=zx_0x_1 
\end{array}
\right\rangle.\]
Then $G$ admits the following presentation:
\begin{equation*}\left\langle\begin{array}{c|c}
    x_0,x_1,z & x_0x_1z=zx_0x_1,\, \, x_1z\prod(n-1;x_0,x_1)=z\prod(n;x_0,x_1) \\
\end{array}\right\rangle,\end{equation*}
where $\prod(k;x_0,x_1)$ denotes the word $x_0x_1x_0\cdots$ of length $k$.
Moreover, the words $x_0,x_1$ and $z$ represent the same element in both presentations.
\end{lemma}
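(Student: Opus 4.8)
The plan is to prove the lemma by a sequence of Tietze transformations, since both presentations are built on the same symbols $x_0,x_1,z$ and the second should arise from the first by eliminating the generators $x_2,\dots,x_{n-1}$.

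First I would split the triple relation $x_0x_1z=x_{n-1}zx_0=zx_0x_1$ of the first presentation into its two constituent equalities: the outer equality $x_0x_1z=zx_0x_1$, which is already the first relation of the target presentation, and the equality $x_{n-1}zx_0=zx_0x_1$. The remaining equality $x_0x_1z=x_{n-1}zx_0$ is a consequence of these two, so nothing is lost. Next I would use the chain of relations $x_0x_1=x_ix_{i+1}$ (for $i=1,\dots,n-2$) to remove generators one at a time: rewriting $x_0x_1=x_ix_{i+1}$ as $x_{i+1}=x_i^{-1}x_0x_1$ exhibits $x_{i+1}$ as a word in $x_0,x_1$ and the already-eliminated $x_2,\dots,x_i$, so each such relation permits deleting one generator $x_{i+1}$ by a Tietze move. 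As there are exactly $n-2$ such relations and exactly $n-2$ generators $x_2,\dots,x_{n-1}$ to remove, after this process the surviving generators are $x_0,x_1,z$, and the surviving relations are $x_0x_1z=zx_0x_1$ together with the transformed relation $x_{n-1}zx_0=zx_0x_1$, in which $x_{n-1}$ now denotes an explicit word in $x_0,x_1$.

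The core computation is then to identify this transformed relation with the second target relation. Writing $c:=x_0x_1$, the first relation says exactly that $c$ commutes with $z$, so I may move powers of $c$ freely past $z$. A short induction on $i$ applied to the recursion $x_{i+1}=x_i^{-1}c$ (distinguishing the parity of $i$) yields the closed forms $x_{2j+1}=c^{-j}x_1c^{j}$ and $x_{2j}=c^{-(j-1)}x_1^{-1}c^{j}$. Specialising to $i=n-1$, substituting into $x_{n-1}zx_0=zc$, and using $c^{j}z=zc^{j}$ to collect the central factors causes the relation to collapse: in both parities one reads off the result through the identities $c^{j}x_0=\prod(2j+1;x_0,x_1)$ and $c^{j}=\prod(2j;x_0,x_1)$, recognising it as precisely $x_1z\prod(n-1;x_0,x_1)=z\prod(n;x_0,x_1)$. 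This produces the second presentation, and since none of the symbols $x_0,x_1,z$ was ever rewritten, all three represent the same elements in both presentations, as claimed.

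I expect the main obstacle to be the bookkeeping of this last step rather than any conceptual difficulty: keeping the parity of $n$ straight in the closed form for $x_{n-1}$, and correctly matching the powers of $c$ against the lengths $n-1$ and $n$ of the alternating words $\prod(n-1;x_0,x_1)$ and $\prod(n;x_0,x_1)$. It is reassuring that, although the intermediate formula for $x_{n-1}$ depends on the parity of $n$, the resulting relation is uniform, which serves as a useful consistency check on the calculation.
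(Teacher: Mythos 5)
Your proposal is correct and follows essentially the same route as the paper's proof: both eliminate $x_2,\dots,x_{n-1}$ by deriving a closed form for $x_j$ as a word in $x_0,x_1$ from the chain $x_0x_1=x_ix_{i+1}$ (your formulas $x_{2j+1}=c^{-j}x_1c^{j}$ and $x_{2j}=c^{-(j-1)}x_1^{-1}c^{j}$ coincide with the paper's conjugation formulas by the alternating words $\prod(j-1;\cdot,\cdot)$), then substitute into $x_{n-1}zx_0=zx_0x_1$ and use the centrality of $z$ relative to $x_0x_1$ to collapse the relation, with a parity split on $n$ exactly as in the paper. Your parity computations check out, so nothing further is needed.
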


\begin{proof}
We claim that the set of relations 
\begin{equation}\label{PresBMReq1}
    x_0x_1=x_ix_{i+1}\, \forall i=1,\dots,n-2
\end{equation}
is equivalent to the set of relations
\begin{equation}\label{PresBMReq2}
x_j=\begin{cases} \prod(j-1;x_1,x_0)^{-1}x_0\prod(j-1;x_1,x_0) \, \, \text{if} \,\, j \, \, \text{is even}\\
\prod(j-1;x_0,x_1)^{-1}x_1\prod(j-1;x_0,x_1)\, \, \text{if} \,\, j \, \, \text{is odd}
\end{cases} \, \, \forall j=0,\dots,n-1.
\end{equation}
\fbox{\eqref{PresBMReq1}$\Rightarrow $\eqref{PresBMReq2}:}
We prove this implication by induction on $j$. For $j=0,1$, this is clear. Assume this is true up to $j$ even. Then we have
\begin{eqnarray*}
x_0x_1&=&x_jx_{j+1}\\
&=&\prod(j-1;x_1,x_0)^{-1}x_0\prod(j-1;x_1,x_0)x_{j+1} \,\, \text{by induction hypothesis,}
\end{eqnarray*}
which is equivalent to $x_{j+1}=\prod(j;x_0,x_1)^{-1}x_1\prod(j;x_0,x_1)$. The proof for $j$ odd is entirely similar.\\
\fbox{\eqref{PresBMReq2}$\Rightarrow$\eqref{PresBMReq1}:} Let $i\in [n-1]$ be even. We have
\begin{eqnarray*}
x_ix_{i+1}&=& \prod(i-1;x_1,x_0)^{-1}x_0\prod(i-1;x_1,x_0)\prod(i;x_0,x_1)^{-1}x_1\prod(i;x_0,x_1)\\
&=&\prod(i-1;x_1,x_0)^{-1}x_1\prod(i;x_0,x_1)\\
&=&x_0x_1.
\end{eqnarray*}
The proof for $i$ odd  is entirely similar. This shows that equations \eqref{PresBMReq1} and \eqref{PresBMReq2} are equivalent.\\
Now, if $n$ is even, using \eqref{PresBMReq2} the relation $x_0x_1z=x_{n-1}zx_0$ can be rewritten as
$$x_0x_1z=\prod(n-2;x_0,x_1)^{-1}x_1\prod(n-2;x_0,x_1)zx_0,$$
which is equivalent to 
$$\prod(n-2;x_0,x_1)x_0x_1z=x_1\prod(n-2,x_0,x_1)zx_0.$$
Since $z$ commutes with $x_0x_1$, we get the claimed presentation.\\
If $n$ is odd, using \eqref{PresBMReq2} the relation $x_0x_1z=x_{n-1}zx_0$ can be rewritten as 
$$x_0x_1z=\prod(n-2;x_1,x_0)^{-1}x_0\prod(n-2;x_1,x_0)zx_0,$$
which is equivalent to
$$\prod(n;x_1,x_0)z=x_0\prod(n-2;x_1,x_0)zx_0.$$
Since $z$ commutes with $x_0x_1$, we get the claimed presentation.
\end{proof}


\begin{proposition}\label{PresentationNorM1}
Let $k,b,n,c,m\in \N^*$ be such that $k,bn,cm\geq 2$. The group $W_{bn}^c(k,bn,cm)$ admits the presentation

\begin{equation}\label{NorM1}\left\langle\begin{array}{c|c}
     &  s^k=t^{bn}=z^c=1 \\
   s,t,z  & stz=zst\\
   & tz(st)^{m-1}s=z(st)^m
   
\end{array}
\right\rangle.\end{equation}
In terms of the canonical generators of $J(k,bn,cm)$, we have $s=s,t=t$ and $z=u^m$.
\end{proposition}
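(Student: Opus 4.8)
The plan is to compute the presentation by applying the Reidemeister--Schreier algorithm (Proposition \ref{Algo}) to the normal subgroup $H:=W_{bn}^c(k,bn,cm)$ of $G:=J(k,bn,cm)$. First I would unwind the notation: by definition $W_{bn}^c(k,bn,cm)=J\begin{pmatrix} k & bn & cm\\ &1 &m \end{pmatrix}=\llangle s,t,u^m\rrangle_G$, and arguing as in Remark \ref{QuotientAbParent} (kill $s$ and $t$ and set $u^m=1$) the quotient $G/H$ is the cyclic group $\langle \bar u\rangle\cong \Z/m$. Thus I apply Proposition \ref{Algo} with $u$ playing the role of the distinguished generator (the one called $s$ there). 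This produces generators $s_p,t_p,u_p$ for $p\in\{0,\dots,m-1\}$ with $s_p=u^psu^{-p}$, $t_p=u^ptu^{-p}$, together with $u_p=1$ for $p\le m-2$ and $z:=u_{m-1}=u^m$.

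Next I would translate the relations of $G$ using the simplified rule \eqref{tau2}. The relations $s^k=1$ and $t^{bn}=1$ contain no $u$, so at each level $p$ they become $s_p^k=1$ and $t_p^{bn}=1$. For the two braid-type relations, which I take to be $stu=tus$ and $stu=ust$, each side carries exactly one $u$, so \eqref{tau2} applies and gives, at level $p$,
\[ s_pt_pu_p=t_pu_ps_{p+1}\qquad\text{and}\qquad s_pt_pu_p=u_ps_{p+1}t_{p+1}, \]
indices read modulo $m$. The relation $u^{cm}=1$ is the only one to which \eqref{tau2} does not apply (its two sides carry different numbers of $u$'s); here I compute $\tau(u^{cm})=u_0u_1\cdots u_{cm-1}=(u_0\cdots u_{m-1})^c$ directly from the inductive definition of $\tau$, which collapses to $z^c=1$ once the relations $u_p=1$ ($p\le m-2$) are imposed, the conjugate relations from the other levels being then redundant.

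Then I would perform the elimination at the levels $p\le m-2$, where $u_p=1$. The second braid relation becomes $s_pt_p=s_{p+1}t_{p+1}$, so all the products $s_pt_p$ equal $s_0t_0=st$; the first becomes $s_{p+1}=t_p^{-1}s_pt_p=(st)^{-1}s_p(st)$, whence by induction $s_p=(st)^{-p}s(st)^p$ and $t_p=(st)^{-p}t(st)^p$ for all $p$. These are legitimate Tietze eliminations of $s_1,\dots,s_{m-1},t_1,\dots,t_{m-1}$, and since each $s_p$ (resp. $t_p$) is thereby conjugate to $s$ (resp. $t$), the power relations $s_p^k=1$ and $t_p^{bn}=1$ all collapse to $s^k=1$ and $t^{bn}=1$.

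Finally I would read off the surviving relations at level $p=m-1$, where $u_{m-1}=z$ and $s_{m-1}t_{m-1}=st$. The second braid relation gives $st\,z=z\,st$, i.e. $stz=zst$. The first gives $st\,z=t_{m-1}z s$; substituting $t_{m-1}=(st)^{-(m-1)}t(st)^{m-1}$, left-multiplying by $(st)^{m-1}$ and using that $z$ commutes with $st$ yields $tz(st)^{m-1}s=z(st)^m$. After renaming $s_0=s$ and $t_0=t$ this is exactly the claimed presentation, with $z=u^m$ as stated. I expect the main obstacle to be the index bookkeeping in \eqref{tau2}---keeping correct the shift $s_p\mapsto s_{p+1}$ caused by each occurrence of $u$---together with the separate treatment of $u^{cm}=1$, for which the simplified rule is unavailable.
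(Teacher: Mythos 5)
Your proposal is correct and follows essentially the same route as the paper: apply the Reidemeister--Schreier algorithm to $W_{bn}^c(k,bn,cm)=\llangle s,t,u^m\rrangle_{J(k,bn,cm)}$ with transversal $\{1,u,\dots,u^{m-1}\}$, rewrite the relations level by level, and eliminate the redundant generators. The only cosmetic difference is that the paper packages the final Tietze eliminations (expressing each $s_p,t_p$ as a conjugate of $s,t$ by a power of $st$) into the separate Lemma \ref{PresBMR}, whereas you carry them out inline; the computations are the same.
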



\begin{proof}[Proof of Proposition \ref{PresentationNorM1}]
By Remark \ref{QuotientAbParent}, the group $J(k,bn,cm)/W_{bn}^c(k,bn,cm)$ is isomorphic to $\Z/m$ and a Schreier transversal for $W_{bn}^c(k,bn,cm)$ is $\{1,u,\dots,u^{m-1}\}$. Using Reidemeister-Schreier algorithm as in Proposition \ref{Algo} with Equation \eqref{tau2} and considering Presentation \eqref{PresParent} for $J(k,bn,cm)$, the group $W_{bn}^c(k,bn,cm)$ admits the following presentation: 

\begin{equation*}\left\langle 
       \begin{array}{l|cl}
           s_0,\dots,s_{m-1}     \,        & \, u_i=1 \,\, \forall i=0,\dots,m-2 \\
     t_0,\dots,t_{m-1} \,  & \,s_i^k=t_i^{bn}=(u_iu_{i+1}\cdots u_{i+m-1})^c=1\,\, \forall i=0,\dots,m-1 \\
        u_0,\dots,u_{m-1}     \,        &\, s_it_iu_i=t_iu_is_{i+1}=u_is_{i+1}t_{i+1} \,\, \forall i=0,\dots,m-1   \\
                          \end{array}
     \right\rangle.\end{equation*}
where indices are taken modulo $m$.\\
The set of relations $u_i=1;\, (u_iu_{i+1}\cdots u_{i+m-1})^c=1\,\, \forall i\in\llbracket 0,m-1\rrbracket$ implies $u_{m-1}^c=1$. Moreover, for each $0\leq i \leq m-2$, the relation $s_it_iu_i=t_iu_is_{i+1}=u_is_{i+1}t_{i+1}$ reads $s_it_i=t_is_{i+1}=s_{i+1}t_{i+1}$ and for $i=m-1$ we get $s_{m-1}t_{m-1}u_{m-1}=t_{m-1}u_{m-1}s_0=u_{m-1}s_0t_0$.
We thus get the following presentation for $W_{bn}^c(k,bn,cm)$:

\begin{equation}\label{ReidemeisterSchreier2}\left\langle 
       \begin{array}{l|cl}
           s_0,\dots,s_{m-1}     \,        &\, u_{m-1}^c=1,\,\,s_i^k=t_i^{bn}=1\,\, \forall i=0,\dots,m-1\\
     t_0,\dots,t_{m-1} \,  & \, s_it_i=t_is_{i+1}=s_{i+1}t_{i+1} \, \, \forall i=0,\dots,m-2\\
  \, \, \,\,\,\,\,\,\,u_{m-1}     \,        &\, s_{m-1}t_{m-1}u_{m-1}=t_{m-1}u_{m-1}s_0=u_{m-1}s_0t_0  \\
                          \end{array}
     \right\rangle.\end{equation}
\noindent
Now, observe that for all $i=0,\dots,m-2$, the relation $s_it_i=t_is_{i+1}$ implies that $s_i$ and $s_{i+1}$ are conjugate in $W_{bn}^c(k,bn,cm)$, thus all $s_i$'s are conjugate to each other in $W_{bn}^c(k,bn,cm)$. Similarly, all $t_i$'s are conjugate to each other in $W_{bn}^c(k,bn,cm)$. In particular, the relations $z^c=s_0^k=t_0^{bn}=1$ are enough to describe the first line of Presentation \eqref{ReidemeisterSchreier2}.\\
Writing $x_{2i}:=s_i$ and $x_{2i+1}=t_i$ for all $i=0,\dots,m-1$ and $z:=u_{m-1}$, we thus get the following presentation for $W_{bn}^c(k,bn,cm)$:
\begin{equation*}\left\langle 
       \begin{array}{l|cl}
             &\, z^c=1,\,\,x_{0}^{k}=x_{1}^{bn}=1\\
 x_0,\dots,x_{2m-1},z \,  & \, x_{2i}x_{2i+1}=x_{2i+1}x_{2i+2}=x_{2i+2}x_{2i+3} \, \, \forall i=0,\dots,m-2\\
  &\, x_{2m-2}x_{2m-1}z=x_{2m-1}zx_0=zx_0x_1  \\
                          \end{array}
     \right\rangle.\end{equation*}
The set of relations  $x_{2i}x_{2i+1}=x_{2i+1}x_{2i+2}=x_{2i+2}x_{2i+3} \, \, \forall i=0,\dots,m-2$ is equivalent to the set of relations $x_0x_1=x_ix_{i+1} \, \, \forall i=0,\dots,2m-2$.\\
Moreover, since $x_{2m-2}x_{2m-1}=x_0x_1$, the relations $x_{2m-2}x_{2m-1}z=x_{2m-1}zx_0=zx_0x_1$ can be written $x_0x_1z=x_{2m-1}zx_0=zx_0x_1$.\\
Thus, the group $W_{bn}^c(k,bn,cm)$ admits the presentation 

\begin{equation*}\left\langle 
       \begin{array}{l|cl}
             &\, z^c=1,\,\,x_{0}^{k}=x_{1}^{bn}=1\\
 x_0,\dots,x_{2m-1},z \,  & \, x_0x_1=x_ix_{i+1} \, \, \forall i=1,\dots,2m-2\\
  &\, x_0x_1z=x_{2m-1}zx_0=zx_0x_1  \\
                          \end{array}
     \right\rangle.\end{equation*}
By Lemma \ref{PresentationNorM1}, the group $W_{bn}^c(k,bn,cm)$ admits the presentation 

\begin{equation*}\left\langle 
       \begin{array}{l|cl}
             &\, z^c=1,\,\,x_0^k=x_1^{bn}=1\\
 x_0,x_1,z \,  & \, x_0x_1z=zx_0x_1\\
  &\, x_1z(x_0x_1)^{m-1}x_0=z(x_0x_1)^m  \\
                          \end{array}
     \right\rangle.\end{equation*}
By Proposition \ref{Algo} we have $x_0=s_0=s$, $x_1=t_0=t$ and $z=u_{m-1}=u^m$, which concludes the proof. 
\end{proof}

\begin{remark}\label{PresentationBMR}
Consulting \cite[Tables 1-4]{BMR}, one sees that the presentation given in  Proposition \ref{PresentationNorM1} coincides with the BMR presentation for $G(2cd,2c,2)\cong W_2^c(2,2,cd)$ and $G_{15}\cong W_2^3(2,4,3)$.
\end{remark}

\begin{theorem}\label{Pres2}
Let $k,b,n,c,m\in \N^*$ be such that $k,bn,cm\geq 2$ and assume $n\wedge m=1$. Write $m=qn+r$ with $q\geq 0$ and $0\leq r\leq n-1$\footnote{Since $n\wedge m=1$, the case $r=0$ corresponds to $n=1$.}. Then $W_b^c(k,bn,cm)$ admits the following presentation:
\begin{equation}\label{GeneralPresW} 
\begin{aligned}
&(1) \,\, \mathrm{Generators}\!:\,  \{x_1,\dots,x_n,y,z\};\\
&(2) \,\, \mathrm{Relations}\!: \,\\
&x_i^k=y^b=z^c=1\, \forall i=1,\cdots,n, \\
  &  x_1\cdots x_nyz=zx_1\cdots x_ny,\\
       & x_{i+1}\cdots x_nyz\delta^{q-1}x_1\cdots x_{i+r}=x_i\cdots x_nyz\delta^{q-1}x_1\cdots x_{i+r-1}, \, \forall 1\leq i \leq n-r,\\
      &  x_{i+1}\cdots x_nyz\delta^qx_1\cdots x_{i+r-n}=x_i\cdots x_nyz\delta^qx_1\cdots x_{i+r-n-1},\, \forall n-r+1\leq i \leq n.
\end{aligned}
\end{equation}
where $\delta$ denotes $x_1\cdots x_ny$.\\
In terms of the canonical generators of $J(k,bn,cm)$, we have $x_i=t^{i-1}st^{1-i}$ for all $i\in [n]$, $y=t^n$ and $z=u^m$.
\end{theorem}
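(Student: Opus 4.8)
\emph{Strategy.} The plan is to derive Presentation~\eqref{GeneralPresW} by one more application of the Reidemeister–Schreier machinery of Subsection~2.2, this time starting not from the parent group $J(k,bn,cm)$ but from the intermediate group $G:=W_{bn}^c(k,bn,cm)$, whose finite presentation is already supplied by Proposition~\ref{PresentationNorM1}. The key structural point is that $W_b^c(k,bn,cm)$ is a normal subgroup of $G$ with cyclic quotient of order $n$. Indeed, Corollary~\ref{NormalClosureSmall} (taking, in its notation, $b'=n$, $n'=1$, $c'=1$, $m'=m$) gives
$$W_b^c(k,bn,cm)=\llangle x_1,\dots,x_n,t^n,u^m\rrangle_{G};$$
since $t$ is one of the generators of $G$ and the $x_i=t^{i-1}st^{1-i}$ are $t$-conjugates of $s=x_1$, this normal closure equals $\llangle s,u^m,t^n\rrangle_{G}$, which is exactly the kind of subgroup treated by Proposition~\ref{Algo} with distinguished generator $t$. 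Adjoining $s=1$, $z=u^m=1$ and $t^n=1$ to the presentation of Proposition~\ref{PresentationNorM1} collapses all its relations and leaves $\langle t\mid t^n=1\rangle\cong\Z/n$, so $G/W_b^c(k,bn,cm)\cong\Z/n$ and $\{1,t,\dots,t^{n-1}\}$ is a Schreier transversal.

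\emph{The easy relations.} First I would run the rewriting of Proposition~\ref{Algo} and Equation~\eqref{tau2} on the generators $s,t,z$ of $G$, introducing letters $s_j,t_j,z_j$ for $j\in\llbracket 0,n-1\rrbracket$ (indices mod $n$) with $s_j=t^jst^{-j}$, $z_j=t^jzt^{-j}$, $t_j=1$ for $j\leq n-2$ and $t_{n-1}=t^n=:y$. Writing $x_i:=s_{i-1}$, the relations $s^k=1$, $z^c=1$ and $t^{bn}=(t^n)^b=1$ translate at once into $x_i^k=1$, $z_j^c=1$ and $y^b=1$. The relation $stz=zst$ rewrites shift by shift as $s_jt_jz_{j+1}=z_js_jt_j$: for $j\leq n-2$ this reads $z_{j+1}=x_{j+1}^{-1}z_jx_{j+1}$, which lets me eliminate $z_1,\dots,z_{n-1}$ in favour of $z:=z_0$ through $z_j=(x_1\cdots x_j)^{-1}z(x_1\cdots x_j)$ (so that every $z_j^c=1$ collapses to the single relation $z^c=1$), while for $j=n-1$ it becomes, after substituting $z_{n-1}$ and multiplying on the left by $x_1\cdots x_{n-1}$, precisely $x_1\cdots x_nyz=zx_1\cdots x_ny$, i.e. $\delta z=z\delta$ with $\delta=x_1\cdots x_ny$.

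\emph{The main obstacle.} The heart of the argument is the Reidemeister–Schreier rewriting of the last relation $tz(st)^{m-1}s=z(st)^m$ of Proposition~\ref{PresentationNorM1}. For each shift $p\in\llbracket 0,n-1\rrbracket$, Equation~\eqref{tau2} turns this single relation into an equality between two alternating products of letters $s_\bullet t_\bullet$ whose indices increase by one at every occurrence of $t$ and are read modulo $n$. Inserting $t_j=1$ for $j\not\equiv n-1$ and $t_{n-1}=y$, together with $z_j=(x_1\cdots x_j)^{-1}z(x_1\cdots x_j)$ and the commutation $\delta z=z\delta$ just obtained, reorganises each side into the shape occurring in \eqref{GeneralPresW}. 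All the delicate bookkeeping is concentrated here, and this is exactly where the Euclidean division $m=qn+r$ intervenes: the index interval of length $m$ wraps around the cycle $\Z/n$ a number of times governed by $q$, each complete turn contributing a factor $\delta$ (a full residue system of $s$-indices together with the unique $t$-index equal to $n-1$), while the remainder $r$ fixes where the interval stops. This is what simultaneously produces the split into the ranges $1\leq i\leq n-r$ and $n-r+1\leq i\leq n$, the respective powers $\delta^{q-1}$ and $\delta^{q}$, and the endpoints $x_1\cdots x_{i+r}$ and $x_1\cdots x_{i+r-n}$; I have checked on the case $n=2,\,m=3$ that the raw relation at shift $p=0$ reduces, after cancelling a leading $x_1$, to exactly the stated relation for $i=1$. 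I would finish by verifying in general that the $n$ relations produced this way coincide with the two displayed families, which yields Presentation~\eqref{GeneralPresW}; the generator dictionary $x_i=t^{i-1}st^{1-i}$, $y=t^n$, $z=u^m$ then follows from the identifications $s_{i-1}=x_i$, $t_{n-1}=y$ and $z_0=z=u^m$ of Proposition~\ref{PresentationNorM1}.
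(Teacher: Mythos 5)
Your proposal follows essentially the same route as the paper: Reidemeister--Schreier applied to $W_{bn}^c(k,bn,cm)$ (Proposition \ref{PresentationNorM1}) with transversal $\{1,t,\dots,t^{n-1}\}$, the same identification of the quotient with $\Z/n$, the same elimination of $t_0,\dots,t_{n-2}$ and $z_1,\dots,z_{n-1}$, and the same mechanism by which the Euclidean division $m=qn+r$ governs the last two families of relations. The general bookkeeping you defer at the end is exactly what the paper carries out in Lemmas \ref{ProofPres22}, \ref{ProofPres23} and \ref{ProofPres24}, and your structural description of it (one factor $\delta$ per full turn around $\Z/n$, the split at $i=n-r$, the powers $\delta^{q-1}$ vs.\ $\delta^{q}$) matches what those lemmas establish.
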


Before giving the proof of Theorem \ref{Pres2}, we give some presentations illustrating Theorem \ref{Pres2} and state some corollaries.

\begin{example}
A presentation of $W_2^3(4,6,12)=J\begin{pmatrix} 4 & 6 & 12 \\ & 3 & 4 \end{pmatrix}$ is

\begin{equation*}\left\langle
\begin{array}{l|cl}
& x_1^4=x_2^4=x_3^4=y^2=z^3=1, \\
x_1,x_2,x_3\,&\, x_1x_2x_3yz=zx_1x_2x_3y\\
\,\,\,\,\,\,\,y,z\,& \,x_1x_2x_3yzx_1=x_2x_3yzx_1x_2=x_3yzx_1x_2x_3\\
& x_3yzx_1x_2x_3y=yzx_1x_2x_3yx_1
\end{array}
\right\rangle.\end{equation*}
\end{example}


\begin{corollary}\label{Pres2z=1}
Let $k,b,n,m\in \N^*$ with $k,bn,m\geq 2$ and assume $n\wedge m=1$. Write $m=qn+r$ with $q\geq 0$ and $0\leq r\leq n-1$. Then $W_b(k,bn,m)$ admits the following presentation:

\begin{equation}\label{PresPres2z=1}
\begin{aligned}
&(1) \,\, \mathrm{Generators}\!:\,  \{x_1,\dots,x_n,y\};\\
&(2) \,\, \mathrm{Relations}\!: \,\\
&x_i^k=y^b=1\, \forall i=1,\cdots,n, \\
       & x_{i+1}\cdots x_ny\delta^{q-1}x_1\cdots x_{i+r}=x_i\cdots x_ny\delta^{q-1}x_1\cdots x_{i+r-1}, \, \forall 1\leq i \leq n-r,\\
      &  x_{i+1}\cdots x_ny\delta^qx_1\cdots x_{i+r-n}=x_i\cdots x_ny\delta^qx_1\cdots x_{i+r-n-1},\, \forall n-r+1\leq i \leq n.
\end{aligned}
\end{equation}
where $\delta$ denotes $x_1\cdots x_ny$.\\
In terms of the canonical generators of $J(k,bn,m)$, we have $x_i=t^{i-1}st^{1-i}$ for all $i\in [n]$ and $y=t^n$.
\end{corollary}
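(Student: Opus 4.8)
The plan is to derive this statement directly from Theorem \ref{Pres2} by specialising to $c=1$ and eliminating the resulting trivial generator. First I would note that, by the notational convention, $W_b(k,bn,m)=W_b^1(k,bn,m)$, so the corollary is precisely the case $c=1$ of Theorem \ref{Pres2}. The hypotheses $k,bn,m\geq 2$ and $n\wedge m=1$ of the corollary coincide with those of Theorem \ref{Pres2} when $c=1$, so the theorem applies and furnishes Presentation \eqref{GeneralPresW} with generators $\{x_1,\dots,x_n,y,z\}$.

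Next, with $c=1$ the relation $z^c=1$ becomes $z=1$; this is consistent with the identification $z=u^m$ of Theorem \ref{Pres2}, since the parent group $J(k,bn,m)$ already imposes $u^m=1$. I would then apply the Tietze transformation removing the generator $z$ together with the relation $z=1$, substituting $z\mapsto 1$ into every remaining relation. Under this substitution the central commutation relation $x_1\cdots x_nyz=zx_1\cdots x_ny$ collapses to the tautology $x_1\cdots x_ny=x_1\cdots x_ny$ and is discarded, while in each of the two braid-type families the single central letter $z$ simply disappears from both sides. What remains is exactly the list of relations of Presentation \eqref{PresPres2z=1}, with $\delta=x_1\cdots x_ny$ unchanged.

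Finally, the description of the generators in terms of the canonical generators of $J(k,bn,m)$, namely $x_i=t^{i-1}st^{1-i}$ for all $i\in[n]$ and $y=t^n$, carries over verbatim from Theorem \ref{Pres2}, since the Tietze transformation discarded only $z$ and left the meaning of the other generators untouched. There is essentially no real obstacle here: the argument is a routine specialisation, and the only point worth checking is that $z$ enters each surviving relation as a single letter (once per side in the braid relations, and as a central factor in the commutation relation), so that setting $z=1$ degenerates every relation cleanly.
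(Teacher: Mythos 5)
Your proposal is correct and follows essentially the same route as the paper, which likewise obtains the corollary by setting $c=1$ in Theorem \ref{Pres2}, noting that the relation $z^c=1$ forces $z=1$ so that the commutation relation becomes trivial and $z$ drops out of the remaining relations. Your additional remarks (the Tietze elimination of $z$ and the consistency check $z=u^m=1$ in $J(k,bn,m)$) just make explicit what the paper leaves implicit.
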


\begin{proof}
It follows immediately from Theorem \ref{Pres2}, replacing $\delta$ by $x_1\cdots x_ny$ and noting that the second line of the presentation becomes trivial.
\end{proof}

\begin{example}
A presentation of $W_2(3,8,5)=J\begin{pmatrix} 3 & 8 & 5\\ & 4 & 5\end{pmatrix}$ is
\begin{equation*}\left\langle \begin{array}{l|cl}

 x_1,x_2,x_3,x_4\, &\,x_1^3=x_2^3=x_3^3=x_4^3=y^2=1\\
\,\,\,\,\,\,\,\,\,\,\,\,\,\, y \,&\, x_1x_2x_3x_4yx_1=x_2x_3x_4yx_1x_2=x_3x_4yx_1x_2x_3=x_4yx_1x_2x_3x_4\\
 &\, x_4yx_1x_2x_3x_4y=yx_1x_2x_3x_4yx_1
\end{array}
\right \rangle.\end{equation*}

\end{example}

\begin{corollary}\label{Pres2y=1}
Let $k,n,c,m\in \N^*$ with $k,n,cm\geq 2$ and assume $n\wedge m=1$. Write $m=qn+r$ with $q\geq 0$ and $0\leq r \leq n-1$. Then $W^c(k,n,cm)$ admits the presentation 
\begin{equation}\label{PresPres2y=1}
\begin{aligned}
&(1) \,\, \mathrm{Generators}\!:\,  \{x_1,\dots,x_n,z\};\\
&(2) \,\, \mathrm{Relations}\!: \,\\
&x_i^k=z^c=1\, \forall i=1,\cdots,n, \\
  &  x_1\cdots x_nz=zx_1\cdots x_n,\\
       & x_{i+1}\cdots x_nz\delta^{q-1}x_1\cdots x_{i+r}=x_i\cdots x_nz\delta^{q-1}x_1\cdots x_{i+r-1}, \, \forall 1\leq i \leq n-r,\\
      &  x_{i+1}\cdots x_nz\delta^qx_1\cdots x_{i+r-n}=x_i\cdots x_nz\delta^qx_1\cdots x_{i+r-n-1},\, \forall n-r+1\leq i \leq n.
\end{aligned}
\end{equation}
where $\delta$ denotes $x_1\cdots x_n$.\\
In terms of the canonical generators of $J(k,n,cm)$, we have $x_i=t^{i-1}st^{1-i}$ for all $i\in [n]$ and $z=u^m$.
\end{corollary}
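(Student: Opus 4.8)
The plan is to derive this presentation as a direct specialisation of Theorem \ref{Pres2}, exactly parallel to the way Corollary \ref{Pres2z=1} is obtained, but now setting $b=1$ rather than $c=1$. By the notation convention $W^c(k,n,cm)=W_1^c(k,n,cm)$, so Theorem \ref{Pres2} applies verbatim to this group with the parameter choice $b=1$ and $bn=n$ (note that the hypothesis $n\geq 2$ of the corollary guarantees $bn\geq 2$, so Theorem \ref{Pres2} is indeed available).

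First I would write down the presentation of $W_1^c(k,n,cm)$ furnished by Theorem \ref{Pres2}, with generators $\{x_1,\dots,x_n,y,z\}$ and $\delta=x_1\cdots x_ny$. The key observation is that the relation $y^b=1$ becomes simply $y=1$ when $b=1$. Hence $y$ represents the identity element (consistently, $y=t^n=1$ already holds in the parent group $J(k,n,cm)$, where $t$ has order $n$), and a Tietze transformation lets me delete the generator $y$ together with the relation $y=1$, substituting $1$ for $y$ in all remaining relations.

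Carrying out this substitution is the only computation, and it is routine: $\delta=x_1\cdots x_ny$ collapses to $x_1\cdots x_n$; the commutation relation $x_1\cdots x_nyz=zx_1\cdots x_ny$ becomes $x_1\cdots x_nz=zx_1\cdots x_n$; and the two indexed families of relations lose their internal occurrences of $y$, matching line for line the relations displayed in \eqref{PresPres2y=1}. The identification of generators $x_i=t^{i-1}st^{1-i}$ and $z=u^m$ is inherited unchanged from Theorem \ref{Pres2}.

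There is no serious obstacle here: the entire content is contained in Theorem \ref{Pres2}, and the corollary is a clean specialisation. The only point requiring a moment's care is that, unlike the $c=1$ case of Corollary \ref{Pres2z=1} where setting $z=1$ kills the commutation relation, here setting $y=1$ leaves a genuine commutation relation $x_1\cdots x_nz=zx_1\cdots x_n$, which must be retained in the final presentation.
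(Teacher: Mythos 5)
Your proof is correct and is essentially the paper's own argument: the corollary is obtained by specialising Presentation \eqref{GeneralPresW} at $b=1$, so that $y=1$ and $\delta$ collapses to $x_1\cdots x_n$, exactly as you describe (the paper's one-line proof literally cites Proposition \ref{PresentationNorM1}, but this appears to be a slip for Theorem \ref{Pres2}, as the parallel proof of Corollary \ref{Pres2z=1} and the proof of Corollary \ref{QuotientCircularToric} confirm). Your extra care in eliminating $y$ by a Tietze transformation and in retaining the now non-trivial commutation relation $x_1\cdots x_nz=zx_1\cdots x_n$ is precisely the right bookkeeping.
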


\begin{proof}
It follows immediately from Proposition \ref{PresentationNorM1}, replacing $\delta$ by $x_1\cdots x_n$.
\end{proof}

\begin{example}
A presentation of $W^2(3,3,8)=J\begin{pmatrix} 3 & 3 & 8 \\ & 3 & 4\end{pmatrix}$ is
\begin{equation*}\left\langle \begin{array}{l|cl}

                  x_1,x_2,x_3\,  &\,x_1^3=x_2^3=x_3^3=z^2=1\\
\,\,\,\,\,\,\,\,\,\,z\, &\, x_1x_2x_3z=zx_1x_2x_3 \\\,&\,x_1x_2x_3zx_1=x_2x_3zx_1x_2=x_3zx_1x_2x_3
\end{array}
\right \rangle.\end{equation*}
\end{example}

Presentation \eqref{GeneralPresW} also retrieves Presentation \eqref{PresToric}:
\begin{corollary}
Let $k,n,m\in \N_{\geq 2}$. Then $W(k,n,m)$ admits the presentation 
\begin{equation*}\left\langle \begin{array}{c|c}x_1,\dots,x_n& x_i^k=1\, \forall i\in [n],\, \, x_i\cdots x_{i+m-1}=x_j\cdots x_{j+m-1} \, \forall 1\leq i<j\leq n\end{array}\right\rangle, \end{equation*}
where indices are taken modulo $n$. In terms of the canonical generators of $J(k,n,m)$, we have $x_i=t^{i-1}st^{1-i}$ for all $i\in [n]$.
\end{corollary}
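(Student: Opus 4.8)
The plan is to obtain this presentation by specialising Theorem \ref{Pres2} to the case $b=c=1$, in which $W(k,n,m)=W_1^1(k,n,m)$ and the hypotheses of Theorem \ref{Pres2} (including $n\wedge m=1$) are in force. The first step is to observe that setting $b=c=1$ turns the relations $y^b=z^c=1$ into $y=z=1$; that is, the generators $y=t^n$ and $z=u^m$ become trivial. By a Tietze transformation I would delete $y$ and $z$ from the generating set and erase them from every relation. The central relation $x_1\cdots x_n yz=zx_1\cdots x_n y$ then collapses to the tautology $x_1\cdots x_n=x_1\cdots x_n$, the symbol $\delta=x_1\cdots x_ny$ becomes $x_1\cdots x_n$, and the relations $x_i^k=1$ survive unchanged. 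What remains is to show that the two families of relations in \eqref{GeneralPresW} become exactly the toric relations of \eqref{PresToric}.

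The key step is to read these relations as equalities between cyclically consecutive products. Write $\pi_i:=x_i x_{i+1}\cdots x_{i+m-1}$ with all indices taken modulo $n$, as in \eqref{PresToric}. Using the division $m=qn+r$ (where $1\le r\le n-1$, since $n\wedge m=1$ and $n\ge 2$ force $n\nmid m$), I would check by a direct count that each side of the relations has exactly $m$ letters and that, starting from index $i$, performing $q-1$ (respectively $q$) full cycles $\delta=x_1\cdots x_n$ and stopping at index $i+m-1\equiv i+r-1\pmod n$ reproduces $\pi_i$. Concretely, for $1\le i\le n-r$ the right-hand side $x_i\cdots x_n\,\delta^{q-1}\,x_1\cdots x_{i+r-1}$ equals $\pi_i$ and the left-hand side $x_{i+1}\cdots x_n\,\delta^{q-1}\,x_1\cdots x_{i+r}$ equals $\pi_{i+1}$; the same bookkeeping applied to the second family, with $\delta^q$ in place of $\delta^{q-1}$ to absorb the extra wrap occurring when $i+r>n$, identifies its two sides with $\pi_{i+1}$ and $\pi_i$ for $n-r+1\le i\le n$.

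Putting the two families together, the relations say precisely that $\pi_{i+1}=\pi_i$ for every $1\le i\le n$ (with $\pi_{n+1}=\pi_1$ under the modular convention). This chain of equalities is equivalent to asserting that all the $\pi_i$ coincide, i.e. to the relations $x_i\cdots x_{i+m-1}=x_j\cdots x_{j+m-1}$ for all $1\le i<j\le n$. Together with $x_i^k=1$ this is exactly Presentation \eqref{PresToric}, and the identification $x_i=t^{i-1}st^{1-i}$ is inherited verbatim from Theorem \ref{Pres2}.

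I expect the only delicate point to be the index bookkeeping: one must respect the split into the ranges $1\le i\le n-r$ and $n-r+1\le i\le n$ so that each partial product $x_1\cdots x_{i+r}$ (respectively $x_1\cdots x_{i+r-n}$) genuinely stays inside $\{x_1,\dots,x_n\}$, and one must track carefully how many full copies of $\delta$ are needed in each range. Once the combinatorial identity ``word $=\pi_i$'' is verified in both ranges, the remainder of the argument is a formal comparison of relation sets on the common generating set $\{x_1,\dots,x_n\}$.
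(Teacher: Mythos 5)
Your proposal is correct and takes the same route as the paper: the paper essentially leaves this corollary unproved, relying on the specialisation $b=c=1$ of Theorem \ref{Pres2} (cf.\ the phrase ``setting $b$ to be equal to $1$ in Presentation \eqref{Pres2z=1} yields Presentation \eqref{PresToric} since $m=qn+r$'' in the proof of Corollary \ref{QuotientCircularToric}), and your identification of each side of the relations with the cyclic product $\pi_i=x_i\cdots x_{i+m-1}$ is exactly the omitted verification. The only micro-caveat is the case $q=0$ (i.e.\ $m<n$), where the words involve $\delta^{-1}$ and are not literally positive words of length $m$; they still freely reduce to $\pi_{i+1}$ and $\pi_i$, so the conclusion is unaffected.
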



\begin{corollary}\label{xiConjugate}
Let $k,b,n,c,m\in \N^*$ such that $k,bn,cm\geq 2$ and assume $n\wedge m=1$. With the notation of Theorem \ref{Pres2}, all $x_i$'s are conjugate.
\end{corollary}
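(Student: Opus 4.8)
The plan is to read off, directly from Presentation \eqref{GeneralPresW}, an explicit conjugacy relating $x_i$ to $x_{i+r}$ (with indices modulo $n$), and then to invoke the coprimality of $n$ and $r$ to conclude that these conjugacies link all the generators together.

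First, if $n=1$ there is a single generator $x_1$ and the statement is vacuous, so I would assume $n\geq 2$. Since $m=qn+r$ we have $m\equiv r\pmod n$, hence $n\wedge r=n\wedge m=1$; in particular $1\leq r\leq n-1$, so the shift by $r$ is a genuine nontrivial permutation of $\{1,\dots,n\}$ modulo $n$.

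Next I would rewrite each relation of the last two lines of \eqref{GeneralPresW} in the form ``$w\,x_{j}=x_{i}\,w$''. For $1\leq i\leq n-r$, setting $w_i:=x_{i+1}\cdots x_n\,yz\,\delta^{q-1}\,x_1\cdots x_{i+r-1}$, the relation reads $w_i\,x_{i+r}=x_i\,w_i$, so that $x_i=w_i\,x_{i+r}\,w_i^{-1}$. Likewise, for $n-r+1\leq i\leq n$, setting $w_i:=x_{i+1}\cdots x_n\,yz\,\delta^{q}\,x_1\cdots x_{i+r-n-1}$, the relation reads $w_i\,x_{i+r-n}=x_i\,w_i$, giving $x_i=w_i\,x_{i+r-n}\,w_i^{-1}$. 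In both cases the conjugating element $w_i$ is a word in the generators of $W_b^c(k,bn,cm)$, so the conjugacy holds inside $W_b^c(k,bn,cm)$ itself; and since $i+r-n\equiv i+r\pmod n$, in every case $x_i$ is conjugate to $x_{i+r}$, indices read modulo $n$ in $\{1,\dots,n\}$.

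Finally, I would conclude by the standard cyclic argument: the conjugacies above show that $x_i$ and $x_{i+r}$ lie in the same conjugacy class for each $i$, so the shift $i\mapsto i+r$ preserves the conjugacy classes of the $x_i$'s; as $n\wedge r=1$, iterating this shift from any fixed index reaches every residue modulo $n$, whence all of $x_1,\dots,x_n$ are conjugate. I do not expect a serious obstacle here, since the result is essentially immediate from Theorem \ref{Pres2}; the only mild care needed is the bookkeeping that verifies both families of relations produce the \emph{same} shift by $r$ after reducing indices modulo $n$, together with the harmless $q=0$ case (where $\delta^{q-1}=\delta^{-1}$), neither of which affects the argument since $w_i$ remains a well-defined element of the group.
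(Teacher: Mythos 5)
Your proposal is correct and follows essentially the same route as the paper: read off from the relations in Presentation \eqref{GeneralPresW} that $x_i$ is conjugate to $x_{i+r}$ (indices modulo $n$), then use $n\wedge r=n\wedge m=1$ to link all the $x_i$'s. The extra bookkeeping you supply (the explicit conjugating words $w_i$, the $n=1$ and $q=0$ edge cases) is consistent with, and slightly more detailed than, the paper's argument.
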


\begin{proof}
Write $m=qn+r$ with $q\geq 0$ and $0\leq r \leq n-1$. For all $i=1,\dots,n-r$, the relation 
$$x_{i+1}\dots x_nyz\delta^{q-1}x_1\dots x_{i+r}=x_i\dots x_nyz\delta^{q-1}x_1\dots x_{i+r-1}$$
conjugates $x_i$ with $x_{i+r}$. \\
Similarly, for all $i=n-r+1,\dots,n$, the relation 
$$ x_{i+1}\dots x_nyz\delta^qx_1\dots x_{i+r-n}=x_i\dots x_nyz\delta^qx_1\dots x_{i+r-n-1}$$
conjugates $x_i$ with $x_{i+r-n}$. Taking indices modulo $n$, we get that for each $i\in [n]$, $x_i$ is conjugate to $x_{i+r}$. Since $n$ and $m$ are coprime $n$ and $r$ are as well so that all $x_i$'s are conjugate. This concludes the proof.
\end{proof}
\begin{remark}[{\cite[Corollary 2.13]{Gobet Toric}}]
The case $b=c=1$ of Corollary \ref{xiConjugate} is that of toric reflection groups, in which case the result is already known.
\end{remark}

\begin{corollary}\label{QuotientCircularToric}
Let $b,c\in \N^*$, $k,n,m\geq 2$ and assume $n\wedge m=1$. We have the following commutative square of quotients: 
\begin{equation}\label{CommSquareFINDANAME}\begin{tikzcd}
	{W_b^c(k,bn,cm)} & {W^c(k,n,cm)} \\
	{W_b(k,bn,m)} & {W(k,n,m)}
	\arrow["{y=1}", two heads, from=1-1, to=1-2]
	\arrow["{z=1}", two heads, from=1-2, to=2-2]
	\arrow["{z=1}"', two heads, from=1-1, to=2-1]
	\arrow["{y=1}"', two heads, from=2-1, to=2-2]
\end{tikzcd}\end{equation}

\end{corollary}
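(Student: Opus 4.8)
The plan is to read all four arrows directly off the presentations furnished by Theorem~\ref{Pres2} and Corollaries~\ref{Pres2z=1} and~\ref{Pres2y=1}, invoking the universal property of a group presentation (von Dyck's theorem). All four groups are presented on subsets of the alphabet $\{x_1,\dots,x_n,y,z\}$, and the point is that adjoining a single relation (``$y=1$'' or ``$z=1$'') to one presentation produces the presentation of the target, so that the corresponding map of generators extends to a quotient morphism.

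I would first dispatch the two upper and left arrows, where the presentations match verbatim. Specialising Presentation~\eqref{GeneralPresW} of $W_b^c(k,bn,cm)$ at $y=1$ turns $y^b=1$ into a trivial relation, replaces $\delta=x_1\cdots x_ny$ by $x_1\cdots x_n$, sends the central relation to $x_1\cdots x_nz=zx_1\cdots x_n$, and sends each braid relation to the corresponding one of Presentation~\eqref{PresPres2y=1}; the result is literally the presentation of $W^c(k,n,cm)$, so $x_i\mapsto x_i,\ y\mapsto 1,\ z\mapsto z$ defines a surjection $W_b^c(k,bn,cm)\onto W^c(k,n,cm)$ with kernel $\llangle y\rrangle$. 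Symmetrically, specialising~\eqref{GeneralPresW} at $z=1$ deletes $z^c=1$ and the (now trivial) central relation, leaves $\delta=x_1\cdots x_ny$ untouched, and reproduces Presentation~\eqref{PresPres2z=1}, yielding $W_b^c(k,bn,cm)\onto W_b(k,bn,m)$ with $z\mapsto 1$.

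Next I would handle the two arrows landing in $W(k,n,m)$. Specialising Presentation~\eqref{PresPres2y=1} at $z=1$ (here $\delta=x_1\cdots x_n$) kills $z^c=1$ and the central relation and rewrites each braid relation as $x_{i+1}\cdots x_{i+m}=x_i\cdots x_{i+m-1}$ with indices modulo $n$, using that $x_{i+1}\cdots x_n\,\delta^{q-1}x_1\cdots x_{i+r}$ is the length-$m$ consecutive product $x_{i+1}\cdots x_{i+m}$; these are exactly the defining relations of the toric presentation~\eqref{PresToric}, by the same reindexing already used to retrieve~\eqref{PresToric} from~\eqref{GeneralPresW}. This gives $W^c(k,n,cm)\onto W(k,n,m)$ with $z\mapsto 1$, and the symmetric specialisation of~\eqref{PresPres2z=1} at $y=1$ gives $W_b(k,bn,m)\onto W(k,n,m)$ with $y\mapsto 1$.

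Finally, commutativity is formal: both composites $W_b^c(k,bn,cm)\to W(k,n,m)$ send $x_i\mapsto x_i$, $y\mapsto 1$ and $z\mapsto 1$, and since $\{x_1,\dots,x_n,y,z\}$ generates $W_b^c(k,bn,cm)$ by Lemma~\ref{GeneratorsFINDANAME}, the two composites coincide. The only step that is not purely formal---and hence the main (albeit mild) obstacle---is verifying that every relation of a larger presentation specialises to a relation of the smaller one; this is immediate for the upper and left arrows, and for the two arrows into $W(k,n,m)$ it is precisely the consecutive-product reindexing recalled above.
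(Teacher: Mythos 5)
Your proposal is correct and follows essentially the same route as the paper: all four arrows are read off by specialising the uniform presentations of Theorem \ref{Pres2} and Corollaries \ref{Pres2z=1} and \ref{Pres2y=1} at $y=1$ or $z=1$ (equivalently $b=1$ or $c=1$), with the only nontrivial check being the consecutive-product reindexing that recovers Presentation \eqref{PresToric}. You spell out the von Dyck argument and the commutativity check on generators more explicitly than the paper does, but the content is the same.
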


\begin{proof}
It follows from Theorem \ref{Pres2} that setting $b$ (respectively $c$) to be equal to $1$ in Presentation \eqref{GeneralPresW} yields a presentation of $W^c(k,n,cm)$ (respectively of $W_b(k,bn,m)$). Finally, setting $b$ to be equal to 1 in Presentation \eqref{Pres2z=1} yields Presentation \eqref{PresToric} since $m=qn+r$. This concludes the proof.
\end{proof}

\begin{remark}
If $n=1$ and $bn\geq 2$ (respectively $m=1$ and $cm\geq 2$), we have an epimorphism $W_b^c(k,bn,cm)\xto{z=1}W_b(k,bn,m)$ (resp. $W_b^c(k,bn,cm)\xto{y=1}W^c(k,n,cm)$) but the Square \eqref{CommSquareFINDANAME} contains non-defined groups.
\end{remark}

We split the proof of Theorem \ref{Pres2} in four lemmas, using the following strategy: We use Reidemeister-Schreier Algorithm to get a presentation of $W_b^c(k,bn,cm)$ and we simplify it slightly. Then, we use this presentation to derive some useful equalities in $W_b^c(k,bn,cm)$, which in turn will allow us to simplify our given presentation even more and finally arrive to the presentation given in Theorem \ref{Pres2}.

\begin{lemma}\label{ProofPres21}
A presentation of $W_b^c(k,bn,cm)$ is the following:
\begin{equation}\label{Int1Pres2}
\begin{aligned}
&(1) \,\, \mathrm{Generators}\!:\,  \{s_0,\dots,s_{n-1},t_0,\dots,t_{n-1},z_0\dots,z_{n-1}\};\\
&(2) \,\, \mathrm{Relations}\!: \,\\
  &  t_i=1 \,\, \forall i=0,\dots,n-2 \,;\, s_i^k=t_{n-1}^b=z_0^c=1 \, \, \forall i=0,\dots,n-1, \\
  &  s_0\cdots s_{n-1}t_{n-1}z_0=z_0s_0\cdots s_{n-1}t_{n-1},\\
  &z_i=(\prod_{j=0}^{i-1}s_j)^{-1}z_0(\prod_{j=0}^{i-1}s_j)\, \, \forall i=1,\dots,n-1,\\
  & t_iz_{i+1}(\prod_{j=1}^{m-1}s_{i+j}t_{i+j})s_{i+m}=z_is_it_i (\prod_{j=1}^{m-1}s_{i+j}t_{i+j})  \,  \,\, \forall i=0,\dots,n-1.
  \end{aligned}
  \end{equation}
where indices are taken modulo $n$.\\
In terms of the generators $\{s,t,z\}$ of $W_{bn}^c(k,bn,cm)$, we have $s_i=t^{i}st^{-i}$ and $z_i=t^{i}zt^{-i}$ for all $i=0,\dots,n-1$. Moreover, we have $t_i=1$ for all $i=0,\dots,n-2$ and $t_{n-1}=t^n$.
\end{lemma}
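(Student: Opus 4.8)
The plan is to apply the Reidemeister–Schreier machinery developed in Subsection 2.2 to the group $W_{bn}^c(k,bn,cm)$, whose presentation was already computed in Proposition \ref{PresentationNorM1}, in order to descend to the normal subgroup $W_b^c(k,bn,cm)$. By Corollary \ref{NormalClosureSmall} (applied with $b'=n$, $n'=1$, $c'=1$, $m'=m$), the group $W_b^c(k,bn,cm)$ is a normal subgroup of $W_{bn}^c(k,bn,cm)$, and by Remark \ref{QuotientAbParent} the quotient is cyclic of order $n$, generated by the image of $t$. Thus the Schreier transversal $\{1,t,\dots,t^{n-1}\}$ is available, and Proposition \ref{Algo} together with the simplified rewriting rule \eqref{tau2} applies directly.

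First I would take the three-generator, three-relation presentation \eqref{NorM1} of $W_{bn}^c(k,bn,cm)$ with generators $s,t,z$ (where $z=u^m$), and set $S=\{s,t,z\}$ with the distinguished generator being $t$, so $G/H\cong\Z/n$. Running Reidemeister–Schreier produces the tripled alphabet $\{s_i,t_i,z_i\}_{i=0}^{n-1}$ with indices mod $n$, and Proposition \ref{Algo} immediately gives $t_i=1$ for $i=0,\dots,n-2$ together with $t_{n-1}=t^n$, and the identifications $s_i=t^i s t^{-i}$, $z_i=t^i z t^{-i}$. The relation $s^k=1$ produces $s_i^k=1$ for each $i$; the relation $t^{bn}=1$ becomes $t_{n-1}^b=1$ (since each $t$-block of length $n$ contributes one factor of $t^n=t_{n-1}$, and $bn/n=b$); and $z^c=1$ becomes $z_0^c=1$ after noting all $z_i$ are conjugate. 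The central relation $stz=zst$ is rewritten via \eqref{tau2} by tracking the $t$-index increments: applied at base index $p$ it reads $s_p z_{p}$-style words, and at $p=n-1$ it yields the single relation $s_0\cdots s_{n-1}t_{n-1}z_0=z_0 s_0\cdots s_{n-1}t_{n-1}$ displayed in the statement. The conjugation formula $z_i=(\prod_{j=0}^{i-1}s_j)^{-1}z_0(\prod_{j=0}^{i-1}s_j)$ is exactly the rewriting of the relations coming from the fact that $z_{i}$ and $z_0$ are conjugated by the appropriate prefix.

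The genuinely delicate step is the third relation of \eqref{NorM1}, namely $tz(st)^{m-1}s=z(st)^m$, which involves $m$ occurrences of $t$ and hence whose rewrite spreads across all residues mod $n$. Here the careful index bookkeeping of \eqref{tau1}–\eqref{tau2} must be carried out explicitly: applying $\tau(t^i\cdot r\cdot t^{-i})=1$ for each $i=0,\dots,n-1$ and tracking that each $t$ in the word raises the running index by one (mod $n$), the word $tz(st)^{m-1}s$ unwinds into the product $t_i z_{i+1}\bigl(\prod_{j=1}^{m-1}s_{i+j}t_{i+j}\bigr)s_{i+m}$ and the word $z(st)^m$ into $z_i s_i t_i\bigl(\prod_{j=1}^{m-1}s_{i+j}t_{i+j}\bigr)$, giving the final family of relations for $i=0,\dots,n-1$. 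The main obstacle is verifying that the index shifts land exactly as claimed; I expect this to require writing out the cumulative sums $\sum\delta_{t_i,t}$ and checking the boundary terms at the two ends of the relation, exactly as in the derivation of \eqref{tau2}, but with the complication that $(st)^{m-1}s$ is not balanced in its $t$-count in the naive sense (it has $m-1$ factors of $t$ on one side versus $m$ on the other), so the bookkeeping of the final generator index must be done with care rather than by directly invoking \eqref{tau2}. Once these four families of relations are assembled and the identifications $s_i=t^ist^{-i}$, $z_i=t^izt^{-i}$, $t_{n-1}=t^n$ are recorded, the presentation \eqref{Int1Pres2} follows.
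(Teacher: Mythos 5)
Your proposal is correct and follows essentially the same route as the paper: descend from Presentation \eqref{NorM1} of $W_{bn}^c(k,bn,cm)$ via Reidemeister--Schreier with Schreier transversal $\{1,t,\dots,t^{n-1}\}$, then simplify the rewritten relations to eliminate the redundant generators. One minor point: your worry that the relation $tz(st)^{m-1}s=z(st)^m$ is unbalanced in its $t$-count is unfounded --- both sides contain exactly $m$ occurrences of $t$ --- so Equation \eqref{tau2} applies directly, which is exactly what the paper does.
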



\begin{proof}
Combining Corollary \ref{NormalClosureSmall} with $b'=n$ and $c'=1$ and Corollary \ref{xiConjugate} and since $x_1=s$, we have $$W_b^c(k,bn,cm)=\llangle x_1,\cdots,x_n,t^n,z \rrangle_{W_{bn}^c(k,bn,cm)}=\llangle s,t^n,z\rrangle_{W_{bn}^c(k,bn,cm)},$$ hence Proposition \ref{PresentationNorM1} implies that $W_{bn}^c(k,bn,cm)/W_b^c(k,bn,cm)\cong\Z/n$. A Schreier Transversal for $W_b^c(k,bn,cm)$ is thus $\{1,t,\dots,t^{n-1}\}$. Using Reidemeister-Schreier algorithm as in Proposition \ref{Algo} with Equation \eqref{tau2} and considering Presentation \eqref{NorM1} for $W_{bn}^c(k,bn,cm)$, the group $W_b^c(k,bn,cm)$ admits the following presentation:
\begin{equation}\label{DualWLater}
\begin{aligned}
&(1) \,\, \mathrm{Generators}\!:\,  \{s_0,\dots,s_{n-1},t_0,\dots,t_{n-1},z_0\dots,z_{n-1}\};\\
&(2) \,\, \mathrm{Relations}\!: \,\\
  &  t_i=1 \,\, \forall i=0,\dots,n-2 ; \, \, s_i^k=(t_it_{i+1}\cdots t_{i+n-1})^b=z_i^c=1 \, \, \forall i=0,\dots,n-1, \\
  &  s_it_iz_{i+1}=z_is_it_i       \,\, \forall i=0,\dots,n-1,\\
  & t_iz_{i+1}\prod_{j=1}^{m-1}(s_{i+j}t_{i+j})s_{i+m}=z_is_it_i \prod_{j=1}^{m-1}(s_{i+j}t_{i+j})  \,  \,\, \forall i=0,\dots,n-1.
  \end{aligned}
\end{equation}
where indices are taken modulo $n$. It is then enough to show that the first two lines of relations of Presentations \eqref{Int1Pres2} and \eqref{DualWLater} are equivalent to conclude. In this proof we write L1 to refer to the set of relations $$t_i=1\, \, \forall i=0,\dots,n-2.$$\\
\noindent
Combining L1 with $(t_it_{i+1}\cdots t_{i+n-1})^b=1\,\, \forall i=0,\dots,n-1$, we get 
\begin{equation}\label{Torsion t}
t_{n-1}^b=1.
\end{equation}
Moreover, one sees that the set of relations
\begin{equation}\label{PresLemma11}
(s_it_iz_{i+1}=z_is_it_i \, \, \forall i=0,\dots,n-2 )\, + \, \text{L1}
\end{equation}
\noindent
is equivalent to the set of relations
\begin{equation}\label{PresLemma12}
(z_{i+1}=s_i^{-1}z_is_i \, \, \forall i=0,\dots,n-2) \, + \, \text{L1}.
\end{equation}
Iterating the conjugation by $s_i$, the set of relations given by Equation \eqref{PresLemma12} is equivalent to\begin{equation}\label{PresLemma13}
(z_i=(\prod_{j=0}^{i-1}s_j)^{-1}z_0(\prod_{j=0}^{i-1}s_j)\, \, \forall i=1,\dots,n-1) \, + \, \text{L1}.
\end{equation}
We thus get that the set of relations
\begin{equation}\label{PresLemma13Add}(s_{n-1}t_{n-1}z_0=z_{n-1}s_{n-1}t_{n-1})+\eqref{PresLemma13}
\end{equation}
is equivalent to the set of relations 
\begin{equation}\label{PresLemma14}
(s_{n-1}t_{n-1}z_0=(s_0\cdots s_{n-2})^{-1}z_0s_0\cdots s_{n-1}t_{n-1})+\eqref{PresLemma13},
\end{equation}
which in turn is equivalent to 
\begin{equation}\label{PresLemma15}
(s_0\cdots s_{n-1}t_{n-1}z_0=z_0s_0\cdots s_{n-1}t_{n-1})+\eqref{PresLemma13}.
\end{equation}
Finally, by Proposition \ref{Algo}, we have $s_i=t^{i}st^{-i}$ and $z_i=t^{i}zt^{-i}$ for all $i=0,\dots,n-1$. Moreover, we have $t_i=1$ for all $i=0,\dots,n-2$ and $t_{n-1}=t^n$. This concludes the proof.
\end{proof}


\begin{lemma}\label{ProofPres22}
(i) For $0\leq i \leq n-r-1$, in $W_b^c(k,bn,cm)$ we have
\begin{equation}\label{PropLemma21}
\prod_{j=1}^{m-1}s_{i+j}t_{i+j}=(s_{i+1}\cdots s_{n-1}t_{n-1}s_0\cdots s_i)^q(\prod_{j=1}^{r-1}s_{i+j})
.\end{equation}
(ii) For $n-r\leq i \leq n-1$, we have
\begin{equation}\label{PropLemma22}
\prod_{j=1}^{m-1}s_{i+j}t_{i+j}=s_{i+1}\cdots s_{n-1}t_{n-1}(s_0\cdots s_{n-1}t_{n-1})^qs_0\cdots s_{i+r-n-1}
.\end{equation}
\end{lemma}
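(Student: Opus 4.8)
The plan is to prove both identities by a direct computation inside $W_b^c(k,bn,cm)$, presented as in Lemma \ref{ProofPres21}, using only the relations $t_\ell=1$ for $\ell\not\equiv n-1\pmod n$ (so that the unique surviving factor among the $t$'s is $t_{n-1}=t^n$). Reducing every index modulo $n$, I would first observe that in the word $\prod_{j=1}^{m-1}s_{i+j}t_{i+j}$ each factor $t_{i+j}$ is trivial unless $i+j\equiv n-1\pmod n$. Hence the product equals the plain word $s_{i+1}s_{i+2}\cdots s_{i+m-1}$ (indices modulo $n$) with one copy of $t_{n-1}$ inserted immediately after each factor $s_{i+j}$ whose index is congruent to $n-1$. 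Everything then reduces to locating these insertions.

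Next I would write $m-1=qn+(r-1)$ and read the index sequence $(i+1,i+2,\dots,i+m-1)\bmod n$ as $q$ complete passes through the residues $0,1,\dots,n-1$ followed by a trailing run of length $r-1$. The first index congruent to $n-1$ occurs at step $n-1-i$. For case (i), the hypothesis $0\le i\le n-r-1$ gives $n-1-i\ge r$, so the trailing run consists of the indices $i+1,\dots,i+r-1$, all lying in $\{0,\dots,n-2\}$, and therefore carries no $t_{n-1}$. Consequently the word splits as $q$ repetitions of the full block $s_{i+1}\cdots s_{n-1}t_{n-1}s_0\cdots s_i$ (the unique residue $\equiv n-1$ in each pass being that of $s_{n-1}$) followed by the $t$-free tail $s_{i+1}\cdots s_{i+r-1}$; this is exactly \eqref{PropLemma21}. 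A bookkeeping check that the $s$-factors number $qn+(r-1)=m-1$ and that exactly $q$ copies of $t_{n-1}$ appear confirms the grouping.

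For case (ii), where $n-r\le i\le n-1$, one has $n-1-i\le r-1$, so a residue $\equiv n-1$ already appears inside the first (partial) pass and again inside the trailing run. I would therefore regroup so that each full block begins at residue $0$: the word becomes the initial segment $s_{i+1}\cdots s_{n-1}t_{n-1}$, then $q$ full blocks $s_0\cdots s_{n-1}t_{n-1}$, then the tail $s_0\cdots s_{i+r-n-1}$, which is \eqref{PropLemma22}. Here the empty product is read as $1$ (in particular the leading segment is empty when $i=n-1$), and once more the count $qn+(r-1)$ of $s$-factors fixes the grouping. Both cases can equivalently be organised as an induction on $q$, the inductive step peeling off one full period $s_0\cdots s_{n-1}t_{n-1}$ from the front and invoking the same argument for $q-1$.

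The only delicate point is the modular index bookkeeping: correctly identifying at which of the $m-1$ steps the running index equals $n-1$ modulo $n$, and handling the boundary values $r=0$ (which forces $n=1$) and $i=n-1$, where the leading partial block degenerates to the empty word. There is no group theory beyond the substitution $t_\ell=1$, so I expect the main effort to lie in presenting the index arithmetic cleanly rather than in any conceptual obstacle.
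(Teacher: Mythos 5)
Your argument is correct and is essentially the paper's own proof: both substitute $t_\ell=1$ for $\ell\not\equiv n-1\pmod n$ (relation L1 of Lemma \ref{ProofPres21}) and group the $m-1=qn+(r-1)$ factors into $q$ full cyclic passes plus a tail of length $r-1$, your regrouping of the leading partial block in case (ii) being the paper's rearrangement $(AB)^qAC=A(BA)^qC$ with $A=s_{i+1}\cdots s_{n-1}t_{n-1}$ and $B=s_0\cdots s_i$. Your explicit handling of the degenerate reading at $i=n-1$ (where the whole leading block, including its $t_{n-1}$, must be read as empty so that exactly $q$ copies of $t_{n-1}$ appear) is consistent with how the paper itself uses \eqref{PropLemma22} in the separate case $i=n-1$ of Lemma \ref{ProofPres24}.
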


\begin{proof}
\fbox{(i) $0\leq i \leq n-r-1$:}
For all $i\leq n-r-1$, we have $i+r-1\leq n-2$ so that $t_{i+1},\cdots,t_{i+r-1}$ are all equal to $1$ by L1. We get that $\prod_{j=i+1}^{i+r-1}s_jt_j=\prod_{j=1}^{r-1}s_{i+j}$. This allows the following computation:
\begin{equation}\label{PresLemma21}
\begin{aligned}
\prod_{j=1}^{m-1}s_{i+j}t_{i+j}&=((\prod_{j=i+1}^{n-1}s_jt_j)(\prod_{j=0}^{i}s_jt_j))^{q}(\prod_{j=i+1}^{i+r-1}s_jt_j)\\
&=(s_{i+1}\cdots s_{n-1}t_{n-1}s_0\cdots s_i)^q(\prod_{j=1}^{r-1}s_{i+j}).
\end{aligned}
\end{equation}
\fbox{(ii) $n-r\leq i\leq n-1$:}
In this case, we have 
\begin{equation}\label{PresLemma22}
\prod_{j=1}^{r-1}(s_{i+j}t_{i+j})=(\prod_{j=i+1}^{n-1}(s_jt_j))\prod_{j=0}^{i+r-n-1}(s_jt_j)=s_{i+1}\cdots s_{n-1}t_{n-1}s_0\cdots s_{i+r-n-1}.
\end{equation}
Using Equation \eqref{PresLemma22}, we do the following computation:
\begin{equation}\label{PresLemma23}
\begin{aligned}
\prod_{j=1}^{m-1}s_{i+j}t_{i+j}&=((\prod_{j=i+1}^{n-1}s_jt_j)(\prod_{j=0}^{i}s_jt_j))^{q}(\prod_{j=1}^{r-1}s_{i+j}t_{i+j})\\
&=(s_{i+1}\cdots s_{n-1}t_{n-1}s_0\cdots s_i)^qs_{i+1}\cdots s_{n-1}t_{n-1}s_0\cdots s_{i+r-n-1}\\
&=s_{i+1}\cdots s_{n-1}t_{n-1}(s_0\cdots s_{n-1}t_{n-1})^{q}s_0\cdots s_{i+r-n-1},
\end{aligned}
\end{equation}
which concludes the proof.
\end{proof}

Combining Lemmas \ref{ProofPres21} and \ref{ProofPres22}, we are now ready to simplify Presentation \eqref{Int1Pres2}.
For this, we introduce the following notation: L4 will denote the set of relations 
$$t_iz_{i+1}(\prod_{j=1}^{m-1}s_{i+j}t_{i+j})s_{i+m}=z_is_it_i (\prod_{j=1}^{m-1}s_{i+j}t_{i+j})  \,  \,\, \forall i=0,\dots,n-r-1$$
whereas L4' will denote the set of relations
$$t_iz_{i+1}(\prod_{j=1}^{m-1}s_{i+j}t_{i+j})s_{i+m}=z_is_it_i (\prod_{j=1}^{m-1}s_{i+j}t_{i+j})  \,  \,\, \forall i=n-r,\dots,n-1.$$
Moreover, for convenience we will denote $$\delta:=s_0\cdots s_{n-1}t_{n-1},$$ and observe that Relation \eqref{PresLemma15} reads $\delta z_0=z_0\delta$. One sees that in Presentation \eqref{Int1Pres2} each generator $t_0,\dots,t_{n-2}$ represents the neutral element and that the generators $z_1,\dots,z_{n-1}$ are redundant. The goal of the next two lemmas is to remove these generators from our presentation.

\begin{lemma}\label{ProofPres23}
The set of relations L4+\eqref{PresLemma13}+\eqref{PresLemma15}+\eqref{PropLemma21} is equivalent to the set of relations
($s_{i+1}\cdots s_{n-1}t_{n-1}z_0\delta^{q-1}s_0\cdots s_{i+r}=s_i\cdots s_{n-1}t_{n-1}z_0\delta^{q-1}s_0\cdots s_{i+r-1}$, for all $i\in \llbracket 0,n-r-1\rrbracket+$ \eqref{PresLemma13}+\eqref{PresLemma15}+\eqref{PropLemma21}).
\end{lemma}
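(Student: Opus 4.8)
The plan is to start from the relation L4 for a fixed index $i\in\llbracket 0,n-r-1\rrbracket$ and substitute the explicit product formula \eqref{PropLemma21} for $\prod_{j=1}^{m-1}s_{i+j}t_{i+j}$ on both sides. Since we are in the range $0\le i\le n-r-1$, relation L1 forces $t_i=1$, so the left-hand side factor $t_iz_{i+1}$ collapses to $z_{i+1}$ and the right-hand side factor $z_is_it_i$ collapses to $z_is_i$. After this substitution the $i$-th relation of L4 reads, schematically,
\begin{equation*}
z_{i+1}(s_{i+1}\cdots s_{n-1}t_{n-1}s_0\cdots s_i)^q\Bigl(\prod_{j=1}^{r-1}s_{i+j}\Bigr)s_{i+m}=z_is_i(s_{i+1}\cdots s_{n-1}t_{n-1}s_0\cdots s_i)^q\Bigl(\prod_{j=1}^{r-1}s_{i+j}\Bigr).
\end{equation*}

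Next I would rewrite the trailing generator $s_{i+m}$: since $m=qn+r$ and indices are taken modulo $n$, we have $i+m\equiv i+r\pmod n$, so $s_{i+m}=s_{i+r}$, which lets me absorb it into the product $\prod_{j=1}^{r-1}s_{i+j}$ to obtain $\prod_{j=1}^{r}s_{i+j}=s_{i+1}\cdots s_{i+r}$, matching the desired right-most factor $s_0\cdots s_{i+r}$ after reindexing. The second step is to eliminate $z_{i+1}$ and $z_i$ using the relations \eqref{PresLemma13}, which express every $z_i$ as a conjugate of $z_0$ by a prefix of $s_0\cdots s_{n-1}$; substituting $z_{i+1}=(s_0\cdots s_i)^{-1}z_0(s_0\cdots s_i)$ and $z_i=(s_0\cdots s_{i-1})^{-1}z_0(s_0\cdots s_{i-1})$ and then left-multiplying both sides by the appropriate prefix word should convert the conjugated $z_0$'s into a single uniform $z_0$ sitting between two palindromic blocks of $s$'s. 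The key bookkeeping is to recognise that the power $(s_{i+1}\cdots s_{n-1}t_{n-1}s_0\cdots s_i)^q$ is a cyclic rotation of $\delta^q$, and that conjugating by $(s_0\cdots s_i)$ turns this rotated $q$-th power together with the flanking $s$-factors into the block $s_{i+1}\cdots s_{n-1}t_{n-1}z_0\delta^{q-1}s_0\cdots s_{i+r}$ on one side and $s_i\cdots s_{n-1}t_{n-1}z_0\delta^{q-1}s_0\cdots s_{i+r-1}$ on the other.

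The argument must be run as an equivalence, not merely an implication: I would first show that, given \eqref{PresLemma13}, \eqref{PresLemma15} and \eqref{PropLemma21}, each relation of L4 is a consequence of the corresponding target relation, and conversely. This is clean because the manipulations—substituting $t_i=1$, replacing $s_{i+m}$ by $s_{i+r}$, conjugating by the invertible word $s_0\cdots s_i$, and inserting the definitions of $z_i,z_{i+1}$—are all reversible steps in the free group on the generators, each using only relations that are kept on both sides of the claimed equivalence. Since the relations \eqref{PresLemma13}, \eqref{PresLemma15} and \eqref{PropLemma21} appear in both the source and target relation sets, I may freely use them throughout without affecting the equivalence.

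The main obstacle I anticipate is the index arithmetic modulo $n$ inside the $q$-th power: verifying that the cyclically rotated word $(s_{i+1}\cdots s_{n-1}t_{n-1}s_0\cdots s_i)^q$, after conjugation by $s_0\cdots s_i$, produces exactly $z_0\delta^{q-1}$ flanked correctly—with the single leftover copy of $\delta$ being used to absorb the outer $s$-factors and the central $z_0$ inserted in the right place—requires care, especially in distinguishing whether the power $q$ versus $q-1$ appears and in tracking which generators lie in the ranges $i+1,\dots,n-1$ versus $0,\dots,i$. I expect the cleanest route is to treat $\delta=s_0\cdots s_{n-1}t_{n-1}$ as a single symbol commuting with $z_0$ by \eqref{PresLemma15}, and to verify the identity $(s_0\cdots s_i)(s_{i+1}\cdots s_{n-1}t_{n-1}s_0\cdots s_i)^q=\delta^q(s_0\cdots s_i)$ at the level of words, which reduces the whole computation to a telescoping rearrangement.
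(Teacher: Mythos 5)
Your proposal is correct and follows essentially the same route as the paper: substitute \eqref{PropLemma21} for the long product, use \eqref{PresLemma13} (which includes L1) to replace $t_iz_{i+1}$ and $z_is_it_i$ by conjugates of $z_0$, and then use the commutation \eqref{PresLemma15} to rewrite each side of L4 as the corresponding side of the target relation, the equivalence being automatic since every step uses only the relations common to both sets.
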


\begin{proof}
Let $i\in \llbracket 0,n-r-1 \rrbracket$. To prove this Lemma, we show that the left-hand side (respectively right-hand side) of relations L4 represents the same element as\\
$s_{i+1}\cdots s_{n-1}t_{n-1}z_0\delta^{q-1}s_0\cdots s_{i+r}$ (respectively as $s_i\dots s_{n-1}t_{n-1}z_0\delta^{q-1}s_0\cdots s_{i+r-1}$) under the assumption that the set of relations $\eqref{PresLemma13}+\eqref{PresLemma15}+\eqref{PropLemma21}$ holds.\\
\fbox{Left-hand side of L4:} First, observe that $s_{i+m}=s_{i+r}$ since indices are taken modulo $n$. We thus have
\begin{equation*}
\begin{aligned}
 t_iz_{i+1}(\prod_{j=1}^{m-1}s_{i+j}t_{i+j})s_{i+m}&\underset{\eqref{PresLemma13}}=(\prod_{j=0}^{i}s_j)^{-1}z_0(\prod_{j=0}^{i}s_j)(\prod_{j=1}^{m-1}s_{i+j}t_{i+j})s_{i+m} \\
 \underset{\eqref{PropLemma21}}=&(\prod_{j=0}^{i}s_j)^{-1}z_0(\prod_{j=0}^{i}s_j)(s_{i+1}\cdots s_{n-1}t_{n-1}s_0\cdots s_i)^q(\prod_{j=1}^{r-1}s_{i+j})s_{i+r}\\
 \underset{\eqref{PresLemma15}}=&s_{i+1}\cdots s_{n-1}t_{n-1}z_0\delta^{q-1}s_0\cdots s_{i+r}.
 \end{aligned}
\end{equation*}
\fbox{Right-hand side of L4:} We have
\begin{equation*}
\begin{aligned}
z_is_it_i (\prod_{j=1}^{m-1}s_{i+j}t_{i+j})&\underset{\eqref{PresLemma13}}=(\prod_{j=0}^{i-1}s_j)^{-1}z_0(\prod_{j=0}^{i-1}s_j)s_i(\prod_{j=1}^{m-1}s_{i+j}t_{i+j}) \\
\underset{\eqref{PropLemma21}}=&(\prod_{j=0}^{i-1}s_j)^{-1}z_0(\prod_{j=0}^{i-1}s_j)s_i(s_{i+1}\cdots s_{n-1}t_{n-1}s_0\cdots s_i)^q(\prod_{j=1}^{r-1}s_{i+j})\\
\underset{\eqref{PresLemma15}}=&s_i\cdots s_{n-1}t_{n-1}z_0\delta^{q-1}s_0\cdots s_{i+r-1}.
\end{aligned}
\end{equation*}
This concludes the proof.
\end{proof}

\begin{lemma}\label{ProofPres24}
The set of relations L4'+\eqref{PresLemma13}+\eqref{PresLemma15}+\eqref{PropLemma22} is equivalent to the set of relations ($s_{i+1}\cdots s_{n-1}t_{n-1}z_0\delta^{q}s_0\cdots s_{i+r-n}=s_i\cdots s_{n-1}t_{n-1}z_0\delta^{q}s_0\cdots s_{i+r-n-1}$, for all $i\in \llbracket n-r,n-1\rrbracket$ +\eqref{PresLemma13}+\eqref{PresLemma15}+\eqref{PropLemma22}).
\end{lemma}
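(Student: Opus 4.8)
The plan is to imitate the proof of Lemma \ref{ProofPres23} almost verbatim, replacing the range $\llbracket 0,n-r-1\rrbracket$ by $\llbracket n-r,n-1\rrbracket$ and the product formula \eqref{PropLemma21} by \eqref{PropLemma22}. Fixing $i\in\llbracket n-r,n-1\rrbracket$, I will show that, under the assumption that the auxiliary relations \eqref{PresLemma13}, \eqref{PresLemma15} and \eqref{PropLemma22} hold, the left-hand side of the corresponding relation of L4' represents the element $s_{i+1}\cdots s_{n-1}t_{n-1}z_0\delta^q s_0\cdots s_{i+r-n}$ and its right-hand side represents $s_i\cdots s_{n-1}t_{n-1}z_0\delta^q s_0\cdots s_{i+r-n-1}$. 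Since both reductions use only the auxiliary relations they are reversible, so the L4' relation becomes the target relation and vice versa, which is exactly the asserted equivalence.

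For the left-hand side $t_iz_{i+1}\left(\prod_{j=1}^{m-1}s_{i+j}t_{i+j}\right)s_{i+m}$, I first record that, since indices are taken modulo $n$ and $m=qn+r$, we have $s_{i+m}=s_{i+r}=s_{i+r-n}$, the last equality holding because $i+r\geq n$ in this range (this is precisely what makes the tail of the relation wrap around). I then substitute \eqref{PresLemma13} for $z_{i+1}$ and \eqref{PropLemma22} for the middle product, and collapse $(s_0\cdots s_i)(s_{i+1}\cdots s_{n-1}t_{n-1})=\delta$ by definition of $\delta$. The commutation relation $z_0\delta=\delta z_0$ coming from \eqref{PresLemma15} then moves the leftmost $\delta$ to the left of $z_0$, where it cancels against the prefix $(s_0\cdots s_i)^{-1}$, leaving $s_{i+1}\cdots s_{n-1}t_{n-1}z_0\delta^q s_0\cdots s_{i+r-n}$. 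The right-hand side $z_is_it_i\left(\prod_{j=1}^{m-1}s_{i+j}t_{i+j}\right)$ is treated identically, using \eqref{PresLemma13} for $z_i$, \eqref{PropLemma22} for the product, the splitting $(s_0\cdots s_{i-1})s_i(s_{i+1}\cdots s_{n-1}t_{n-1})=\delta$, and a single application of $z_0\delta=\delta z_0$.

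The two features that distinguish this computation from Lemma \ref{ProofPres23} are that the shifted index $s_{i+m}=s_{i+r-n}$ now wraps past $s_{n-1}$, so the tail reads $s_0\cdots s_{i+r-n}$ rather than $s_0\cdots s_{i+r}$, and that \eqref{PropLemma22} captures one extra full cycle, so the exponent is $\delta^q$ instead of $\delta^{q-1}$. I expect the main obstacle to be the boundary value $i=n-1$: there $t_{n-1}$ is not killed by L1, the index $i+1$ wraps so that $z_{i+1}=z_n=z_0$, and the prefix $s_{i+1}\cdots s_{n-1}t_{n-1}$ degenerates. One must read $s_{i+1}\cdots s_{n-1}t_{n-1}$ throughout as the product $\prod_{j=i+1}^{n-1}s_jt_j$, which is empty when $i=n-1$, and check that with this convention both sides still collapse to the stated forms; once this boundary case is verified the argument is complete.
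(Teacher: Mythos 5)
Your overall strategy is the same as the paper's: for each $i$ in the range, rewrite both sides of the L4$'$ relation using \eqref{PresLemma13}, \eqref{PropLemma22} and the commutation $z_0\delta=\delta z_0$ from \eqref{PresLemma15}, note that the rewriting only uses the auxiliary relations and is therefore reversible, and conclude the equivalence; your computation for $n-r\leq i\leq n-2$ is exactly the one in the paper. The shortfall is the boundary case $i=n-1$, which you correctly flag as the delicate point but then leave unverified, and the uniform convention you propose for it does not work as stated. Reading $s_{i+1}\cdots s_{n-1}t_{n-1}$ as the empty product $\prod_{j=n}^{n-1}s_jt_j=1$ is the right interpretation inside \eqref{PropLemma22}, but it is the wrong interpretation in the target relation: the L4$'$ relation for $i=n-1$ literally begins with the nontrivial generator $t_{n-1}$ (which is not killed by L1), followed by $z_n=z_0$ (no appeal to \eqref{PresLemma13} is needed or possible here), and a direct computation --- which is what the paper does --- gives
\[
t_{n-1}z_0\Bigl(\prod_{j=1}^{m-1}s_{j-1}t_{j-1}\Bigr)s_{m-1}=t_{n-1}z_0\delta^{q}s_0\cdots s_{r-1},
\]
so the left prefix of the target relation at $i=n-1$ must be read as $t_{n-1}$, not as $1$; the two readings differ by a left factor of $t_{n-1}$ and yield different relations. (On the right-hand side there is no such ambiguity, since $s_i\cdots s_{n-1}t_{n-1}=s_{n-1}t_{n-1}$ is nonempty, and one checks it reduces to $s_{n-1}t_{n-1}z_0\delta^{q}s_0\cdots s_{r-2}$ using \eqref{PresLemma13} and \eqref{PresLemma15}.) Your argument is complete only once the ``empty product throughout'' convention is replaced by this explicit $i=n-1$ computation, which is precisely the second half of the paper's proof.
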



\begin{proof}
Let $i\in \llbracket n-r,n-1\rrbracket$. To prove this Lemma, we show that the left-hand side (respectively right-hand side) of relations L4' represents the same element as $s_{i+1}\cdots s_{n-1}t_{n-1}z_0\delta^{q}s_0\cdots s_{i+r-n}$ (respectively as $s_i\cdots s_{n-1}t_{n-1}z_0\delta^{q}s_0\cdots s_{i+r-n-1}$) under the assumption that the relations \eqref{PresLemma13}+\eqref{PresLemma15}+\eqref{PropLemma22} hold.
We separate the proof into two cases, depending on whether $n-r\leq i \leq n-2$ or $i=n-1$.\\
\fbox{Case $n-r\leq i \leq n-2$:}\\
\underline{Left-hand side of L4':}
Since indices are taken modulo $n$, for all $i\in \llbracket n-r,n-1\rrbracket$ we have $s_{i+m}=s_{i+r-n}$ . We thus have
\begin{equation*}
\begin{aligned}
 t_iz_{i+1}(\prod_{j=1}^{m-1}s_{i+j}t_{i+j})s_{i+m}&\underset{\eqref{PresLemma13}}=(\prod_{j=0}^{i}s_j)^{-1}z_0(\prod_{j=0}^{i}s_j)(\prod_{j=1}^{m-1}s_{i+j}t_{i+j})s_{i+m} \\
 \underset{\eqref{PropLemma22}}=(\prod_{j=0}^{i}s_j)^{-1}z_0(\prod_{j=0}^{i}&s_j)s_{i+1}\cdots s_{n-1}t_{n-1}(s_0\cdots s_{n-1}t_{n-1})^qs_0\cdots s_{i+r-n-1}s_{i+r-n} \\
 \underset{\eqref{PresLemma15}}=s_{i+1}\cdots s_{n-1}t_{n-1}&z_0\delta^{q}s_0\cdots s_{i+r-n}.
 \end{aligned}
\end{equation*}
\underline{Right-hand side of L4':} We have
\begin{equation*}
\begin{aligned}
z_is_it_i (\prod_{j=1}^{m-1}s_{i+j}t_{i+j})&\underset{\eqref{PresLemma13}}=(\prod_{j=0}^{i-1}s_j)^{-1}z_0(\prod_{j=0}^{i-1}s_j)s_i(\prod_{j=1}^{m-1}s_{i+j}t_{i+j}) \\
\underset{\eqref{PropLemma22}}=(\prod_{j=0}^{i-1}&s_j)^{-1}z_0(\prod_{j=0}^{i-1}s_j)s_is_{i+1}\cdots s_{n-1}t_{n-1}(s_0\cdots s_{n-1}t_{n-1})^qs_0\cdots s_{i+r-n-1} \\
\underset{\eqref{PresLemma15}}=s_i\cdots& s_{n-1}t_{n-1}z_0\delta^{q}s_0\cdots s_{i+r-n-1}.
\end{aligned}
\end{equation*}
\fbox{Case $i=n-1$:}\\
\underline{Left-hand side of L4':} Observe that for all $i=0,\dots,n-1$, we have $s_{n-1+i}=s_{i-1}$ since indices are taken modulo $n$. We thus get
\begin{eqnarray*}
 t_{n-1}z_{(n-1)+1}(\prod_{j=1}^{m-1}s_{(n-1)+j}t_{(n-1)+j})s_{(n-1)+m}&=&t_{n-1}z_0(\prod_{j=1}^{m-1}s_{j-1}t_{j-1})s_{m-1}\\
 &=&t_{n-1}z_0\delta^q(s_0\cdots s_{r-1}).
\end{eqnarray*}
\underline{Right-hand side of L4':} We have
\begin{eqnarray*}
z_{n-1}s_{n-1}t_{n-1}(\prod_{j=1}^{m-1}s_{j-1}t_{j-1})&\underset{\eqref{PresLemma13}}=&(\prod_{j=0}^{n-2}s_j)^{-1}z_0(\prod_{j=0}^{n-2}s_j)s_{n-1}t_{n-1} (\prod_{j=1}^{m-1}s_{j-1}t_{j-1})\\
&=&(\prod_{j=0}^{n-2}s_j)^{-1}z_0(\prod_{j=0}^{n-2}s_j)s_{n-1}t_{n-1}\delta^{q}s_0\cdots s_{r-2}\\
&\underset{\eqref{PresLemma15}}=&s_{n-1}t_{n-1}z_0\delta^qs_0\cdots s_{r-2}.
\end{eqnarray*}
This concludes the proof.
\end{proof}


\begin{proof}[Proof of Theorem \ref{Pres2}]
Combining Lemmas \ref{ProofPres21},  \ref{ProofPres23}, and  \ref{ProofPres24}, we obtain the following presentation for $W_b^c(k,bn,cm)$ :
\begin{equation*}
\begin{aligned}
&(1) \,\, \mathrm{Generators}\!:\,  \{s_0,\dots,s_{n-1},t_{n-1},z_0\};\\
&(2) \,\, \mathrm{Relations}\!: \,\\
&s_i^k=t_{n-1}^b=z_0^c=1 \, \, \forall i=0,\dots,n-1 , \\
  &  z_0s_0\cdots s_{n-1}t_{n-1}=s_0\cdots s_{n-1}t_{n-1}z_0,\\
       & s_{i+1}\cdots s_{n-1}t_{n-1}z_0\delta^{q-1}s_0\cdots s_{i+r}=s_i\cdots s_{n-1}t_{n-1}z_0\delta^{q-1}s_0\cdots s_{i+r-1}, \, (*),\\
      &  s_{i+1}\cdots s_{n-1}t_{n-1}z_0\delta^{q}s_0\cdots s_{i+r-n}=s_i\cdots s_{n-1}t_{n-1}z_0\delta^{q}s_0\cdots s_{i+r-n-1}, \, (**).\\
      & (*) \, \forall\, 0\leq i\leq n-r-1, \, \, (**) \, \forall\, n-r\leq i\leq n-1
\end{aligned}
\end{equation*}
where $\delta$ denotes $s_0\cdots s_{n-1}t_{n-1}$.
In terms of the generators $\{s,t,z\}$ of $W_{bn}^c(k,bn,cm)$, we have $t_{n-1}=t^n$, $z_0=z$ and $s_{i-1}=t^{i-1}st^{1-i}$. Using Proposition \ref{PresentationNorM1}, we thus get that $z_0=u^m$. Writing $x_i:=s_{i-1}$ for all $i=1,\dots,n$, $y:=t_{n-1}$ and $z:=z_0$, we get the desired presentation of $W_b^c(k,bn,cm)$.

\end{proof}

\begin{remark}
Comparing \cite[Tables 1-4]{BMR} with Presentation \eqref{PresToric} and Presentations \eqref{GeneralPresW}-\eqref{PresPres2y=1}, Theorem \ref{Pres2} provides a new proof that all rank two complex reflection groups are isomorphic to $J$-groups, as summarized in Table 1.
\end{remark}

\begin{table}[hbt]
\centering
 \renewcommand{\arraystretch}{1.25}
\begin{tabular}{|l|c|}
\hline
Rank two reflection group & $J$-reflection group\\
\hline
$G(3,3,2),G_4,G_8,G_{16}$ &  $W(k,2,3), \, k=2,3,4,5$
\\
\hline
 $G_{20}$ &  $W(3,2,5)$\\
\hline
 $G(2l+1,2l+1,2)$ &  $W(2,2,2l+1)$
\\
\hline
 $G_{12}$&  $W(2,3,4)$
\\
\hline
 $G_{22}$ &  $W(2,3,5)$\\
\hline
 $G_5,G_{10},G_{18}$ &  $W_3(k,3,2),\, k=3,4,5$
\\
\hline
$G(d(2l+1),2l+1,2)$ &  $W_d(2,d(2l+1),2)$
\\
\hline
 $G_6,G_9,G_{17}$ &  $W_2(k,2,3),\, k=3,4,5$
\\
\hline
 $G_{13}$ & $W_2(2,4,3)$
\\
\hline
 $G_{14}$ &  $W_3(2,3,4)$
\\
\hline 
 $G_{21}$ &  $W_3(2,3,5)$
\\
\hline
$G(2d,2d,2)$ &  $W_2(2,2,d)$
\\
\hline
$G_7,G_{11},G_{19}$ &  $W_2^3(k,2,3), \, k=3,4,5$
\\
\hline
$G_{15}$ &  $W_2^3(2,4,3)$
\\
\hline
 $G(2cd,2c,2)$ &  $W_2^c(2,2,cd)$
\\
\hline

\end{tabular}
\label{Table}
\caption{Rank two complex reflection groups as $J$-reflection groups.}
\end{table}

To end this section, in Proposition \ref{prop1} we make the reflection isomorphism between $W_b^c(k,bn,cm)$ and  $W_c^b(k,cm,bn)$ described in Remark \ref{isopermute} explicit in terms of Presentation \eqref{GeneralPresW}. To this end, denote by $\{a_1,\dots,a_m,p,q\}$ the elements of $W_c^b(k,cm,bn)$ corresponding to $\{x_1,\dots,x_n,y,z\}$ in Presentation \eqref{GeneralPresW}. Denote by $\{s',t',u'\}$ the canonical generators of $J(k,bn,cm)$ and $\{s,t,u\}$ the canonical generators of $J(k,cm,bn)$. Finally, denote by $f$ the isomorphism between $J(k,bn,cm)$ and $J(k,cm,bn)$ described in Remark \ref{isopermute} and by $\tilde f$ its restriction to $W_b^c(k,bn,cm)$.

\begin{proposition}\label{prop1}
For all $i\in [n]$, writing $i-1=g_im+h_i$ the euclidean division of $i-1$ by $m$ we have 
\begin{equation}
\tilde f(x_i)=((a_1\cdots a_mp)^{g_i}a_1\cdots a_{h_i})a_{h_i+1}^{-1}((a_1\cdots a_mp)^{g_i}a_1\cdots a_{h_i})^{-1}.
\end{equation}
\noindent
Moreover, we have
\begin{equation}
\tilde f(y)=q^{-1}, \, \tilde f(z)=p^{-1}.
\end{equation}
\end{proposition}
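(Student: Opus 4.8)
The plan is to compute the images $\tilde f(x_i),\tilde f(y),\tilde f(z)$ directly, using on one side the expressions of $x_i,y,z$ in terms of the canonical generators $s',t',u'$ of $J(k,bn,cm)$ given by Theorem \ref{Pres2}, and on the other the defining rule $f(s')=s^{-1}$, $f(t')=u^{-1}$, $f(u')=t^{-1}$ from Remark \ref{isopermute}. The two easy identities come for free: since $y=(t')^n$ and $z=(u')^m$, we get $\tilde f(y)=(u^{-1})^n=(u^n)^{-1}=q^{-1}$ and $\tilde f(z)=(t^{-1})^m=(t^m)^{-1}=p^{-1}$, because Theorem \ref{Pres2} applied to $W_c^b(k,cm,bn)$ identifies $q=u^n$ and $p=t^m$ in terms of the canonical generators of $J(k,cm,bn)$.

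The substance is the formula for $\tilde f(x_i)$. Starting from $x_i=(t')^{i-1}s'(t')^{1-i}$ and applying $f$ yields $\tilde f(x_i)=(u^{-1})^{i-1}s^{-1}(u^{-1})^{1-i}=u^{1-i}s^{-1}u^{i-1}$, i.e. the conjugate of $s^{-1}$ by $u^{1-i}$. The conceptual heart of the argument is the observation that conjugation by $u$ in $J(k,cm,bn)$ coincides with conjugation by $(st)^{-1}$: the element $\Delta:=stu$ is central (the relations $stu=tus=ust$ say precisely that $\Delta$ commutes with $s$, $t$ and $u$), so $u=(st)^{-1}\Delta$ and hence $ugu^{-1}=(st)^{-1}g(st)$ for every $g$. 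Iterating this, $u^{1-i}s^{-1}u^{i-1}=(st)^{i-1}s^{-1}(st)^{1-i}$, which rewrites $\tilde f(x_i)$ purely in terms of $s$ and $t$.

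It then remains to match $(st)^{i-1}s^{-1}(st)^{1-i}$ with the claimed expression. For this I would record the telescoping identity $a_1\cdots a_j=\prod_{l=1}^{j}t^{l-1}st^{1-l}=(st)^{j-1}st^{1-j}$, valid in $J(k,cm,bn)$. It gives at once $a_1\cdots a_m\,p=(st)^{m-1}st^{1-m}\cdot t^m=(st)^m$, so that $(a_1\cdots a_mp)^{g_i}=(st)^{mg_i}$, together with $a_1\cdots a_{h_i}=(st)^{h_i-1}st^{1-h_i}$. Writing $C:=(a_1\cdots a_mp)^{g_i}a_1\cdots a_{h_i}$ and using $i-1=g_im+h_i$, these combine to $C=(st)^{i-2}st^{1-h_i}$, while $a_{h_i+1}^{-1}=t^{h_i}s^{-1}t^{-h_i}$. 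A short computation in which the $t$-powers telescope then gives $Ca_{h_i+1}^{-1}C^{-1}=(st)^{i-1}s^{-1}(st)^{1-i}=\tilde f(x_i)$; the edge case $h_i=0$ (empty product $a_1\cdots a_{h_i}=1$, $a_{h_i+1}=a_1=s$) is checked separately and returns the same answer. As both sides are elements of $J(k,cm,bn)$ lying in the subgroup $W_c^b(k,cm,bn)$, equality in the ambient group yields the result. I expect the only genuine obstacle to be bookkeeping: keeping the division $i-1=g_im+h_i$ consistent and verifying that the exponents of $t$ cancel correctly, the structural inputs (centrality of $stu$ and the telescoping formula for $a_1\cdots a_j$) being elementary.
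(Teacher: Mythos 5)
Your proposal is correct and follows essentially the same route as the paper: the paper isolates your key step as a separate lemma (for all $N$, $u^{-N}s^{-1}u^{N}=(st)^{N}s^{-1}(st)^{-N}$ via centrality of $ust$, then rewriting $(st)^{m}$ as $a_1\cdots a_mp$ and $(st)^{h}$ as $a_1\cdots a_h t^{h}$) and applies it with $N=i-1$, exactly as you do inline. Your closed-form telescoping identity $a_1\cdots a_j=(st)^{j-1}st^{1-j}$ is just a repackaging of the paper's product expansion, and your identifications $p=t^{m}$, $q=u^{n}$ match the paper's.
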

\noindent
To prove Proposition \ref{prop1}, we show the following lemma: 

\begin{lemma}\label{lem1}
For all $N\in \N$, we have $u^{-N}s^{-1}u^N=(st)^Ns^{-1}(st)^{-N}$. In terms of Presentation \eqref{GeneralPresW}, if we write $N=gm+h$ the euclidean division of $N$ by $m$, we have
$$u^{-N}s^{-1}u^N=(a_1\cdots a_mp)^ga_1\cdots a_{h}a_{h+1}^{-1}((a_1\cdots a_mp)^ga_1\cdots a_{h})^{-1}.$$
\end{lemma}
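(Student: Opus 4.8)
The plan is to prove the two displayed identities in turn: the first is a general conjugation formula in the parent group $J(k,cm,bn)$, and the second is its translation into the generators $a_1,\dots,a_m,p$ supplied by Presentation \eqref{GeneralPresW}. I would begin by exploiting that the element $c:=stu$ is central in $J(k,cm,bn)$. This is immediate from the defining relations $stu=tus=ust$: for instance $s^{-1}cs=s^{-1}(stu)s=tus=c$, and the analogous computations $t^{-1}ct=t^{-1}(tus)t=ust=c$ and $u^{-1}cu=u^{-1}(ust)u=stu=c$ show that $c$ commutes with all three canonical generators, hence with the whole group. Rewriting $c=stu$ as $u=c(st)^{-1}$ and using centrality of $c$, for every $x\in J(k,cm,bn)$ one obtains
\[
u^{-1}xu=(st)c^{-1}\,x\,c(st)^{-1}=(st)\,x\,(st)^{-1}.
\]
Iterating this $N$ times and specialising to $x=s^{-1}$ yields $u^{-N}s^{-1}u^{N}=(st)^{N}s^{-1}(st)^{-N}$, the first assertion. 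I expect this to be the conceptually decisive step; it is short once centrality of $stu$ is noticed.

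The remaining work is the bookkeeping needed to rewrite $(st)^{N}s^{-1}(st)^{-N}$ using $a_i=t^{i-1}st^{1-i}$ and $p=t^{m}$. The key computational ingredient is the telescoping identity
\[
a_1\cdots a_j=(st)^{j-1}st^{1-j}\qquad(1\le j\le m),
\]
proved by a one-line induction, since at each junction $t^{-(i-1)}t^{i}=t$ glues consecutive factors. Taking $j=m$ and multiplying by $p=t^{m}$ gives the crucial equality $a_1\cdots a_m\,p=(st)^{m}$, which converts powers of $(st)^{m}$ into powers of the generator product. Writing $N=gm+h$ with $0\le h\le m-1$, I then have $(st)^{N}=(a_1\cdots a_m\,p)^{g}(st)^{h}$.

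It therefore remains to verify $(st)^{h}s^{-1}(st)^{-h}=(a_1\cdots a_h)\,a_{h+1}^{-1}\,(a_1\cdots a_h)^{-1}$, which is a direct substitution: using $a_1\cdots a_h=(st)^{h-1}st^{1-h}$ and $a_{h+1}^{-1}=t^{h}s^{-1}t^{-h}$, the inner $t$-powers collapse to $t\,s^{-1}t^{-1}$, and the outer conjugation reassembles into $(st)^{h}s^{-1}(st)^{-h}$. Conjugating this identity by $(a_1\cdots a_m\,p)^{g}=(st)^{gm}$ and recognising that $\bigl((a_1\cdots a_m\,p)^{g}a_1\cdots a_h\bigr)^{-1}=(a_1\cdots a_h)^{-1}(a_1\cdots a_m\,p)^{-g}$ produces exactly the claimed expression. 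The only genuine care is in the index tracking, namely ensuring $h+1\le m$ so that $a_{h+1}$ is an honest generator and matching the euclidean remainder $h$ to the truncation $a_1\cdots a_h$; this is the part most likely to conceal an off-by-one error, and so is where I would be most careful.
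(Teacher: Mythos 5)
Your proof is correct and follows essentially the same route as the paper: centrality of $stu$ converts conjugation by $u^{-N}$ into conjugation by $(st)^{N}$, and the telescoping identity $a_1\cdots a_j\,=(st)^{j}t^{-j}$ (so $a_1\cdots a_m p=(st)^m$) rewrites the result in the generators of Presentation \eqref{GeneralPresW}. Your version merely spells out the centrality computation and the induction that the paper leaves implicit; the index check $h+1\le m$ you flag is indeed the only point the paper also notes.
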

\begin{proof}
\fbox{$u^{-N}s^{-1}u^N=(st)^Ns^{-1}(st)^{-N}$:}
Since $u(st)=ust$ is central in $J(k,cm,bn)$, conjugating by $st$ amounts to conjugating by $u^{-1}$.\\
\fbox{$u^{-N}s^{-1}u^N=(a_1\cdots a_mp)^ga_1\cdots a_{h}a_{h+1}^{-1}((a_1\cdots a_mp)^ga_1\cdots a_{h})^{-1}$:} Let $N=gm+h$ be the euclidean division of $N$ by $m$. We have 
\begin{equation*}
    \begin{aligned}
        &u^{-N}s^{-1}u^N=(st)^Ns^{-1}(st)^{-N}=((st)^m)^g(st)^hs^{-1}(((st)^m)^g(st)^h)^{-1}\\
        &=(((\prod_{j=0}^{m-1}t^{j}st^{-j})t^m)^g(\prod_{j=0}^{h-1}t^jst^{-j})t^h)s^{-1}(((\prod_{j=0}^{m-1}t^{j}st^{-j})t^m)^g(\prod_{j=0}^{h-1}t^jst^{-j})t^h)^{-1}\\
        &\underset{\text{Thm.}\, \ref{Pres2}}=(a_1\cdots a_mp)^ga_1\cdots a_{h}t^hs^{-1}t^{-h}((a_1\cdots a_mp)^ga_1\cdots a_{h})^{-1}\\
        &\underset{\text{Thm.}\, \ref{Pres2}}=(a_1\cdots a_mp)^ga_1\cdots a_{h}a_{h+1}^{-1}((a_1\cdots a_mp)^ga_1\cdots a_{h})^{-1} \, \text{since $h\leq m-1$}
    \end{aligned}
\end{equation*}
This concludes the proof.
\end{proof}

\begin{proof}[Proof of Proposition \ref{prop1}]
By Theorem \ref{Pres2}, for all $i=1,\dots,n$ we have\\ $x_i=(t')^{i-1}s'(t')^{1-i}\in J(k,bn,cm)$. Thus, we have 
\begin{equation*}
    \begin{aligned}
        \tilde f(x_i)&=f((t')^{i-1}s'(t')^{1-i})=(u^{-1})^{i-1}s^{-1}(u^{-1})^{1-i}\\
        &=(a_1\cdots a_mp)^{g_i}a_1\cdots a_{h_i}a_{h_i+1}^{-1}((a_1\cdots a_mp)^{g_i}a_1\cdots a_{h_i})^{-1} \, \text{by Lemma \ref{lem1}.}
    \end{aligned}
\end{equation*}
Moreover, we have $\tilde f(y)=f((t')^n)=u^{-n}=q^{-1}$. Similarly, we have $\tilde f(z)=p^{-1}$. This concludes the proof.
\end{proof}

\subsection{\texorpdfstring{The center of $J$-reflection groups}{}}
The goal of this subsection is to determine the center of $J$-reflection groups.
As was noted by Achar \& Aubert in \cite[Section 8]{AA}, combining the correspondence between finite $J$-groups and rank two complex reflection groups established in \cite{AA} with \cite[Tables 1-4]{BMR} provides a description of the center of finite $J$-groups in terms of the canonical generators of their parent $J$-groups:
If the group $H=J\begin{pmatrix}a & b & c \\ a' & b' & c'\end{pmatrix}$ is finite its center is cyclic, generated by the smallest power of $stu$ belonging to $H$. 
In \cite{Gobet Toric}, Gobet extended this result to all (non necessarily finite) parent $J$-groups and all toric reflection groups.\\
In order to state the result, we introduce the following notation:
\begin{notation}
Let $k,n,m\in \N_{\geq 2}$. We denote by $W^+_{k,n,m}$ the group defined by the presentation 
\begin{equation}\label{PresAlternate}
\left\langle \begin{array}{c|c}
   a,b\, &\, a^k=b^n=(ba^{-1})^m=1
\end{array}
\right\rangle.\end{equation}
\end{notation}

This notation comes from the fact that $W^+_{k,n,m}$ is isomorphic to the alternating subgroup of the triangle Coxeter group with edge labels $k$,$n$ and $m$ (see \cite[Exercise 9, Section 1, Chapter 4]{BourbakiLie4-6}).

Using this notation, Gobet showed the following result:

\begin{theorem}[{\cite[Theorem 3.3 (2) and (3)]{Gobet Toric}}]\label{CenterToric}
Let $k,n,m\in \N_{\geq 2}$ with $n$ and $m$ coprime. \\
(1) We have the following commutative diagram of short exact sequences: 
\begin{equation}\label{SESToric}\begin{tikzcd}
	1 & {\langle (x_1\cdots x_n)^m\rangle} & {W(k,n,m)} & {W^+_{k,n,m}} & 1 \\
	1 & {\langle stu\rangle} & {J(k,n,m)} & {W^+_{k,n,m}} & 1
	\arrow[from=1-1, to=1-2]
	\arrow[from=1-2, to=1-3]
	\arrow[from=1-3, to=1-4]
	\arrow[from=1-4, to=1-5]
	\arrow[hook', from=1-2, to=2-2]
	\arrow[hook', from=1-3, to=2-3]
	\arrow["{=}", from=1-4, to=2-4]
	\arrow[from=2-1, to=2-2]
	\arrow[from=2-2, to=2-3]
	\arrow[from=2-4, to=2-5]
	\arrow[from=2-3, to=2-4]
\end{tikzcd}\end{equation}
(2) We have $Z(W(k,n,m))=\langle (x_1\cdots x_n)^m\rangle$ and $Z(J(k,n,m))=\langle stu\rangle$.
\end{theorem}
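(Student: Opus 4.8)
The plan is to exhibit both $J(k,n,m)$ and $W(k,n,m)$ as central extensions of the von Dyck group $W^+_{k,n,m}$, and to read off the centers from the triviality of $Z(W^+_{k,n,m})$.

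First I would verify directly that $\sigma:=stu$ is central in $J(k,n,m)$: the relations $stu=tus=ust$ say precisely that $s$ commutes with $tu$, that $t$ commutes with $us$, and that $u$ commutes with $st$, whence $\sigma$ commutes with each of $s,t,u$. Imposing $\sigma=1$ forces $u=(st)^{-1}$, so $J(k,n,m)/\langle\sigma\rangle$ has presentation $\langle s,t\mid s^k=t^n=(st)^m=1\rangle$, which is $W^+_{k,n,m}$ under the identification $a=s$, $b=t^{-1}$; since $\sigma$ is central its normal closure is $\langle\sigma\rangle$, yielding the bottom short exact sequence. For the top row I would first record the telescoping identity $x_1\cdots x_n=(st)^{n-1}st^{1-n}=(st)^nt^{-n}=(st)^n$ (using $t^n=1$), and, from $u=(st)^{-1}\sigma$ and $u^m=1$, the identity $\sigma^m=(st)^m$; together these give $(x_1\cdots x_n)^m=(st)^{nm}=\sigma^{nm}$. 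The restriction to $W=\llangle s\rrangle$ of the quotient map onto $W^+_{k,n,m}$ is surjective because its image is a normal subgroup of $W^+_{k,n,m}$ containing $a$, and the normal closure of $a$ is all of $W^+_{k,n,m}$ exactly when $n\wedge m=1$ (killing $a$ collapses $b^n=b^m=1$ to $b=1$). Its kernel is $W\cap\langle\sigma\rangle=\langle\sigma^{nm}\rangle=\langle(x_1\cdots x_n)^m\rangle$, using that $\sigma$ generates $J(k,n,m)/W\cong\Z/nm$ by Remark \ref{QuotientAbParent}. The vertical maps being the inclusions, the square commutes, which settles (1).

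For (2), the inclusions $\langle\sigma\rangle\subseteq Z(J(k,n,m))$ and $\langle(x_1\cdots x_n)^m\rangle=\langle\sigma^{nm}\rangle\subseteq Z(W(k,n,m))$ are immediate, $\sigma$ being central in $J(k,n,m)\supseteq W(k,n,m)$. The reverse inclusions follow from one input: $Z(W^+_{k,n,m})=1$. Indeed the image of a central element under a surjection is central, so $Z(J(k,n,m))$ and $Z(W(k,n,m))$ map into $Z(W^+_{k,n,m})$; once the latter is trivial they must lie in the respective kernels $\langle\sigma\rangle$ and $\langle(x_1\cdots x_n)^m\rangle$, giving equality.

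The hard part is thus proving $Z(W^+_{k,n,m})=1$. For the Euclidean and hyperbolic von Dyck groups this is standard: a cocompact group of orientation-preserving isometries of $\mathbb{E}^2$ or $\mathbb{H}^2$ generated by nontrivial rotations has trivial center. The subtle case is the spherical one, where a finite von Dyck group can be dihedral of even order and so have center $\Z/2$; here I would use $n\wedge m=1$ to exclude it. A finite $W^+_{k,n,m}$ has nontrivial center only if the multiset $\{k,n,m\}$ equals $\{2,2,r\}$ with $r$ even, and in every arrangement of such a multiset the coprime pair $(n,m)$ must contain the two equal even entries, forcing $n\wedge m\geq 2$ and contradicting the hypothesis. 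The remaining spherical groups are $A_4$, $S_4$, $A_5$ and the odd dihedral groups, all centerless, so the argument closes uniformly.
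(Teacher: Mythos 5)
Your argument is correct, and it follows essentially the same strategy that the paper uses: note that this particular theorem is only \emph{cited} here (from Gobet's paper), but your skeleton --- centrality of $stu$, identification of $J(k,n,m)/\langle stu\rangle$ with $W^+_{k,n,m}$, surjectivity of the restriction to $W(k,n,m)$ via $n\wedge m=1$, the computation $(x_1\cdots x_n)^m=(stu)^{nm}$ together with $J(k,n,m)/W(k,n,m)\cong\Z/nm$ to get the kernel, and finally reading off both centers from $Z(W^+_{k,n,m})=1$ --- is exactly the one the paper deploys for its generalisation in Theorem \ref{CenterFINDANAME}, with the triviality of $Z(W^+_{k,n,m})$ imported as Proposition \ref{CenterAlternating}. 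The only cosmetic difference is that you handle the finite (spherical) case uniformly by classifying the finite von Dyck groups with nontrivial centre ($\{2,2,r\}$ with $r$ even, excluded by $n\wedge m=1$), whereas the paper's generalisation falls back on the BMR tables for finite groups; your route is self-contained and matches the ``$k,n,m$ not all even'' clause of Proposition \ref{CenterAlternating}.
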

\begin{remark}\label{CenterNotPrime0}
As was noted in \cite[Proof of Theorem 3.3]{Gobet Toric}, we have $(x_1\cdots x_n)^m=(stu)^{nm}$. Moreover, the morphism $J(k,n,m)\to W^+_{k,n,m}$ sends $s$ to $a$, $t$ to $b^{-1}$ and $u$ to $ba^{-1}$.
\end{remark}

\begin{remark}\label{CenterNotPrime}
In \cite[Proof of Theorem 3.3 (2) and (3)]{Gobet Toric}, the fact that $n$ and $m$ are coprime is not used to prove the results about $J(k,n,m)$. In particular, they hold for all $k,n,m\geq 2$. We write $\phi$ for the morphism $J(k,n,m)\to W^+_{k,n,m}$ described in Remark \ref{CenterNotPrime0}. We also note $\pi$ the morphism $J(k,n,m)\to \Z/nm$ constructed in Remark \ref{QuotientAbParent}.
\end{remark}

A key ingredient for the proof of Theorem \ref{CenterToric} which we shall use to determine the center of $J$-reflection groups is the following proposition:
\begin{proposition}[{\cite[Proposition 3.1 + Proof of Theorem 3.3]{Gobet Toric}}]\label{CenterAlternating}
For all $k,n,m\in \N_{\geq 2}$ such that $W^+_{k,n,m}$ is infinite, the center of $W^+_{k,n,m}$ is trivial. Moreover, if $k,n$ and $m$ are not all even and $W^+_{k,n,m}$ is finite, its center is trivial.
\end{proposition}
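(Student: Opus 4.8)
The plan is to use the identification of $W^+_{k,n,m}$ with the von Dyck (ordinary triangle) group $\langle x,y \mid x^k = y^n = (xy)^m = 1\rangle$, matching the presentation \eqref{PresAlternate} via $x = a$ and $y = b^{-1}$ (so that $xy = (ba^{-1})^{-1}$ has order $m$), and then to exploit its geometric action. Recall that according to the sign of $1/k + 1/n + 1/m - 1$, this group acts properly discontinuously and faithfully by orientation-preserving isometries on the sphere $S^2$ (sign $>0$, the finite case), the Euclidean plane $\mathbb{E}^2$ (sign $=0$), or the hyperbolic plane $\mathbb{H}^2$ (sign $<0$); the latter two are exactly the infinite cases. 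In each model the generators $x$, $y$ and $xy$ act as nontrivial rotations about the three vertices $P_1, P_2, P_3$ of a fundamental triangle, and these centres are pairwise distinct.

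First I would treat the infinite case ($\mathbb{E}^2$ or $\mathbb{H}^2$), where no parity hypothesis is needed. The key point is that the centralizer in $\mathrm{Isom}^+(X)$ of a nontrivial rotation about a point $P$ is contained in the stabilizer of $P$: for $\mathbb{H}^2 \cong \mathrm{PSL}_2(\mathbb{R})$ this is the statement that the centralizer of an elliptic element is the one-parameter elliptic subgroup fixing its fixed point, and for $\mathbb{E}^2$ it is an elementary computation. Since the action is faithful, a central element $z$ commutes with the rotations $x$ and $y$ and hence fixes both $P_1$ and $P_2$; but an orientation-preserving isometry of $\mathbb{E}^2$ or $\mathbb{H}^2$ fixing two distinct points is the identity, so $z = 1$ and the center is trivial.

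Next I would treat the finite case, where the hypothesis that $k, n, m$ are not all even enters. Here one invokes the classification of finite von Dyck groups: they are the spherical triangle groups $(2,2,m) \cong D_m$ (dihedral of order $2m$), $(2,3,3) \cong A_4$, $(2,3,4) \cong S_4$ and $(2,3,5) \cong A_5$. The groups $A_4, S_4, A_5$ have trivial center, and $D_m$ has trivial center precisely when $m$ is odd, its center being $\langle (xy)^{m/2}\rangle \cong \mathbb{Z}/2$ when $m$ is even. Since the only finite case with $k,n,m$ all even is $(2,2,m)$ with $m$ even, which is exactly the excluded family, the center is trivial in every remaining finite case, as claimed.

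I expect the main obstacle to lie in the first case, specifically in justifying faithfulness of the geometric action and pinning down the centralizer of a rotation, rather than in the finite case, which reduces to the short list above. Faithfulness is the classical fact that the von Dyck presentation presents the genuine discrete rotation group of the associated tessellation (provable via Poincaré's polygon theorem, or by restricting the faithful Tits geometric representation of the ambient triangle Coxeter group to its index-two rotation subgroup); once faithfulness is in hand the centralizer computations are routine. Alternatively, to avoid geometry in the hyperbolic case one can observe that $W^+_{k,n,m}$ is then a non-elementary Fuchsian group, whose center is trivial because the center of $\mathrm{PSL}_2(\mathbb{R})$ is trivial and a non-elementary group admits no nontrivial element commuting with all of it.
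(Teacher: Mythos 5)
Your proof is correct, but note that the paper itself does not prove Proposition \ref{CenterAlternating} at all: it is imported verbatim from Gobet's article (Proposition 3.1 together with the proof of Theorem 3.3 there), so there is no in-paper argument to compare yours against. What you supply is a genuine, self-contained proof tailored to the rank-three situation: the identification $a\mapsto x$, $b\mapsto y^{-1}$ turning Presentation \eqref{PresAlternate} into the von Dyck presentation $\langle x,y\mid x^k=y^n=(xy)^m=1\rangle$ is exact (since $(ab^{-1})^m=((ba^{-1})^m)^{-1}$), the centralizer-of-a-rotation argument in the Euclidean and hyperbolic cases is sound (commuting with a nontrivial rotation forces fixing its unique centre, and an orientation-preserving isometry of $\mathbb{E}^2$ or $\mathbb{H}^2$ fixing two distinct points is the identity), and the finite case reduces correctly to the list $D_m$, $A_4$, $S_4$, $A_5$, where the only nontrivial centers occur for $(2,2,m)$ with $m$ even, i.e.\ exactly the excluded all-even family. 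You rightly identify faithfulness of the geometric action as the one nontrivial input; either of your proposed justifications (Poincar\'e's polygon theorem, or restricting the faithful geometric representation of the full triangle Coxeter group to its index-two rotation subgroup) closes that gap, and the latter is the cleaner one since it only uses standard Coxeter theory. Two cosmetic remarks: proper discontinuity is never used, and for $m=2$ the group $D_2$ is abelian so its center is the whole group rather than $\langle (xy)^{m/2}\rangle$ of order two --- but $(2,2,2)$ is all even and hence excluded, so this does not affect the argument. The trade-off versus the cited source is that Gobet's statement is derived in the broader context of alternating subgroups of infinite irreducible Coxeter groups, whereas your argument is more elementary and explicit but specific to rank three, which is all that is needed here.
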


In order to determine the center of $J$-reflection groups, we will combine Proposition \ref{CenterAlternating} with Remark \ref{QuotientAbParent}.

Recall that $\Inn(G)$ denotes the inner automorphism group of $G$, which is isomorphic to $G/Z(G)$.

\begin{theorem}\label{CenterFINDANAME}
Let $b,c,k,n,m\in \N^*$ with $k,bn,cm\geq 2$ and suppose $n\wedge m=1$.\\
(1) We have the following exact double complex: \begin{equation}\label{DoubleComplex}\begin{tikzcd}
	& 1 & 1 & 1 \\
	1 & \ker(\phi|_{_{W_b^c(k,bn,cm)}}) & {W_b^c(k,bn,cm)} & {W^+_{k,bn,cm}} & 1 \\
	1 & {\langle stu\rangle} & {J(k,bn,cm)} & {W^+_{k,bn,cm}} & 1 \\
	1 & {\langle\pi(stu)\rangle} & {\Z/nm} & 1 & 1 \\
	& 1 & 1 & 1
	\arrow[from=2-1, to=2-2]
	\arrow[from=2-2, to=2-3]
	\arrow["\phi", from=2-3, to=2-4]
	\arrow[from=2-4, to=2-5]
	\arrow[from=2-2, to=3-2]
	\arrow[from=2-3, to=3-3]
	\arrow["{=}", from=2-4, to=3-4]
	\arrow[from=3-4, to=4-4]
	\arrow["\pi", from=3-3, to=4-3]
	\arrow[from=3-2, to=4-2]
	\arrow[from=3-1, to=3-2]
	\arrow[from=3-2, to=3-3]
	\arrow["\phi", from=3-3, to=3-4]
	\arrow[from=3-4, to=3-5]
	\arrow[from=4-1, to=4-2]
	\arrow[from=4-2, to=4-3]
	\arrow[from=4-3, to=4-4]
	\arrow[from=4-4, to=4-5]
	\arrow[from=1-2, to=2-2]
	\arrow[from=1-3, to=2-3]
	\arrow[from=1-4, to=2-4]
	\arrow[from=4-2, to=5-2]
	\arrow[from=4-3, to=5-3]
	\arrow[from=4-4, to=5-4]
\end{tikzcd}\end{equation}
(2) With the notation of Theorem \ref{Pres2}, the center of $W_b^c(k,bn,cm)$ is equal to $\langle (x_1\cdots x_ny)^mz^n\rangle$. In particular, $J$-reflection groups have a cyclic center.\\
(3) We have $\ker(\phi|_{_{W_b^c(k,bn,cm)}})=Z(W_b^c(k,bn,cm))$. In particular, we have the following isomorphism: $$\mathrm{Inn}(W_b^c(k,bn,cm))\cong W^+_{k,bn,cm}.$$
\end{theorem}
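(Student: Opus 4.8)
The backbone of the argument is the top row of Diagram \eqref{DoubleComplex}, the short exact sequence $1\to\ker(\phi|_{W_b^c(k,bn,cm)})\to W_b^c(k,bn,cm)\xrightarrow{\phi}W^+_{k,bn,cm}\to1$, together with an explicit description of its kernel. I would begin with the one genuine computation. Using $x_i=t^{i-1}st^{1-i}$, a short induction gives $x_1\cdots x_j=(st)^jt^{-j}$, so that $\delta=x_1\cdots x_ny=(st)^n$ since $y=t^n$; as $stu$ is central we have $u=(st)^{-1}(stu)$, whence $z^n=u^{mn}=(st)^{-mn}(stu)^{mn}$ and therefore
$$(x_1\cdots x_ny)^mz^n=(st)^{mn}(st)^{-mn}(stu)^{mn}=(stu)^{nm}.$$
Thus the candidate central generator is exactly $(stu)^{nm}\in Z(J(k,bn,cm))=\langle stu\rangle$ (Theorem \ref{CenterToric} through Remark \ref{CenterNotPrime}).

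Next I would assemble the double complex for part (1). Row $3$ and columns $3,4$ are given, respectively by Theorem \ref{CenterToric}/Remark \ref{CenterNotPrime}, by Remark \ref{QuotientAbParent}, and trivially. The hinge is that $\pi(stu)=\pi(t)\pi(u)$ corresponds, under $\Z/n\times\Z/m\cong\Z/nm$ (here $n\wedge m=1$ is used), to a generator, so $\pi(stu)$ has order $nm$; hence $(stu)^j\in W_b^c(k,bn,cm)=\ker\pi$ exactly when $nm\mid j$, which gives
$$\ker(\phi|_{W_b^c(k,bn,cm)})=W_b^c(k,bn,cm)\cap\langle stu\rangle=\langle(stu)^{nm}\rangle=\langle(x_1\cdots x_ny)^mz^n\rangle.$$
This makes column $2$ and row $4$ exact. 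The one remaining input is surjectivity of $\phi|_{W_b^c(k,bn,cm)}$, which I would obtain from an index count rather than a diagram chase: $[\langle stu\rangle:\langle stu\rangle\cap W_b^c(k,bn,cm)]=nm=[J(k,bn,cm):W_b^c(k,bn,cm)]$ forces $W_b^c(k,bn,cm)\cdot\langle stu\rangle=J(k,bn,cm)$, so $\phi(W_b^c(k,bn,cm))=\phi(J(k,bn,cm))=W^+_{k,bn,cm}$. Commutativity of the squares is automatic, every map being an inclusion or a restriction of $\phi$ or $\pi$.

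For (2) and (3) one inclusion is free: $\langle(x_1\cdots x_ny)^mz^n\rangle=\ker(\phi|_{W_b^c(k,bn,cm)})$ lies in $Z(J(k,bn,cm))$ and so centralizes $W_b^c(k,bn,cm)$. For the reverse inclusion I would split on the finiteness of $W^+_{k,bn,cm}$. If it is infinite, then $Z(W^+_{k,bn,cm})=1$ by Proposition \ref{CenterAlternating}, and surjectivity of $\phi|_{W_b^c(k,bn,cm)}$ carries $Z(W_b^c(k,bn,cm))$ into $Z(W^+_{k,bn,cm})=1$, forcing $Z(W_b^c(k,bn,cm))\subseteq\ker(\phi|_{W_b^c(k,bn,cm)})$. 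If it is finite, then the triangle is spherical, so $J(k,bn,cm)$ and its finite-index subgroup $W_b^c(k,bn,cm)$ are finite by \cite{AA}, and the center of a finite $J$-group is cyclic, generated by the smallest power of $stu$ it contains, namely $(stu)^{nm}$ (\cite{AA} together with \cite[Tables 1-4]{BMR}). Either way $Z(W_b^c(k,bn,cm))=\langle(x_1\cdots x_ny)^mz^n\rangle$, which proves (2) and the first assertion of (3); the isomorphism $\Inn(W_b^c(k,bn,cm))\cong W_b^c(k,bn,cm)/Z(W_b^c(k,bn,cm))\cong W^+_{k,bn,cm}$ then follows from surjectivity, completing (3).

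I expect the main obstacle to be precisely this reverse inclusion in the finite case. Gobet's toric argument used $Z(W^+_{k,n,m})=1$ uniformly, but for $b,c>1$ the all-even spherical triangles $\{k,bn,cm\}=\{2,2,2\ell\}$ (for instance $W_2^c(2,2,cd)\cong G(2cd,2c,2)$) genuinely occur and have $Z(W^+_{k,bn,cm})\neq1$, unlike in the toric situation where $n\wedge m=1$ excludes them; there Proposition \ref{CenterAlternating} no longer applies, so these cases must be absorbed into the finite-group computation, and checking that such parameters really produce finite groups is the point needing care.
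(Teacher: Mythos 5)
Your proposal is correct and follows essentially the same route as the paper: build the double complex, identify $\ker(\phi|_{_{W_b^c(k,bn,cm)}})$ with $\langle (stu)^{nm}\rangle=\langle (x_1\cdots x_ny)^mz^n\rangle$, use triviality of $Z(W^+_{k,bn,cm})$ via Proposition \ref{CenterAlternating} in the infinite case, and defer to the BMR tables in the finite case. The only deviations are cosmetic --- you prove surjectivity of $\phi|_{_{W_b^c(k,bn,cm)}}$ by an index count and compute the kernel directly from the order of $\pi(stu)$, where the paper collapses a quotient presentation using $n\wedge m=1$ and invokes the snake lemma --- and your closing concern about the all-even spherical triangles is precisely the point where the paper, like you, absorbs those cases into the finite classification.
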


Before giving the proof of Theorem \ref{CenterFINDANAME}, we make the following observations:

\begin{corollary}\label{CenterSpecialCases}
(i) If $m\geq 2$, the center of $W_b(k,bn,m)$ is $\langle (x_1\cdots x_ny)^m\rangle$.\\
(ii) If $n\geq 2$, the center of $W^c(k,n,cm)$ is $\langle (x_1\cdots x_n)^mz^n\rangle$.\\
(iii) If $n,m\geq 2$, we retrieve point (2) of Theorem \ref{CenterToric}, that is, the center of $W(k,n,m)$ is $\langle (x_1\cdots x_n)^m\rangle$.
\end{corollary}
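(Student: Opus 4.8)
The plan is to obtain all three statements as immediate specialisations of Theorem \ref{CenterFINDANAME}(2), which computes the center of the general group $W_b^c(k,bn,cm)$ to be $\langle (x_1\cdots x_ny)^mz^n\rangle$. First I would recall the notational conventions fixed earlier: by definition $W_b(k,bn,m)=W_b^1(k,bn,m)$, $W^c(k,n,cm)=W_1^c(k,n,cm)$ and $W(k,n,m)=W_1^1(k,n,m)$, so each of the three groups is the general group $W_b^c(k,bn,cm)$ with one or both of the parameters $b,c$ set equal to $1$. The standing hypotheses of Theorem \ref{CenterFINDANAME}, namely $k,bn,cm\geq 2$ and $n\wedge m=1$, hold in every case: the extra constraint in each item (for instance $m\geq 2$ in (i), which forces $cm=m\geq 2$) is exactly what is needed to keep the specialised group a genuine $J$-reflection group.

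The substance is then to track which generators are killed by the specialisation. For (i) I would put $c=1$, so that the relation $z^c=1$ in Presentation \eqref{GeneralPresW} becomes $z=1$; hence $z^n=1$ and the central generator $(x_1\cdots x_ny)^mz^n$ collapses to $(x_1\cdots x_ny)^m$, giving $Z(W_b(k,bn,m))=\langle(x_1\cdots x_ny)^m\rangle$. For (ii) I would symmetrically put $b=1$, so $y^b=1$ becomes $y=1$, the factor $(x_1\cdots x_ny)^m$ simplifies to $(x_1\cdots x_n)^m$, and the center is $\langle(x_1\cdots x_n)^mz^n\rangle$. For (iii) I would put $b=c=1$, killing both $y$ and $z$, so that the center reduces to $\langle(x_1\cdots x_n)^m\rangle$; comparing with Theorem \ref{CenterToric}(2) shows this recovers the known description of $Z(W(k,n,m))$.

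The only point demanding attention is the compatibility of generators: one must check that the $x_i,y,z$ occurring in Theorem \ref{CenterFINDANAME}(2) are the very generators of the specialised presentations rather than relabelled ones. This is guaranteed by Corollary \ref{QuotientCircularToric}, where setting $b$ (respectively $c$) equal to $1$ in Presentation \eqref{GeneralPresW} is shown to produce Presentation \eqref{PresPres2y=1} for $W^c(k,n,cm)$ (respectively Presentation \eqref{PresPres2z=1} for $W_b(k,bn,m)$), with $y$ (respectively $z$) becoming trivial. I therefore do not anticipate any real obstacle: the corollary is a pure specialisation of Theorem \ref{CenterFINDANAME}(2), and the proof is just the bookkeeping of which of $y,z$ vanishes in each of the three cases.
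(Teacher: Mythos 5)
Your proposal is correct and is exactly the argument the paper intends: the corollary is stated without proof as an immediate observation, and it does follow by specialising Theorem \ref{CenterFINDANAME}(2) at $c=1$, $b=1$, or $b=c=1$, where the relations $z^c=1$ (resp.\ $y^b=1$) force $z=1$ (resp.\ $y=1$) so that the central generator $(x_1\cdots x_ny)^mz^n$ collapses as you describe. Your bookkeeping of the standing hypotheses (each extra condition $m\geq 2$, $n\geq 2$ being precisely what keeps $cm\geq 2$, resp.\ $bn\geq 2$) and the compatibility of generators via Corollary \ref{QuotientCircularToric} is accurate, so there is nothing to add.
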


\begin{proof}[Proof of Theorem \ref{CenterFINDANAME}]
\fbox{(1)}
That the squares are commutative follows from the definitions.
It remains to show that rows and columns are exact. The exactness of the middle row follows from Remark \ref{CenterNotPrime} and the exactness of the middle column follows from Remark \ref{QuotientAbParent}.  Moreover, Remark \ref{QuotientAbParent} shows that the order of $\pi(t)$ is $n$ and that the order of $\pi(u)$ is $m$, so that the order of $\pi(stu)=\pi(t)\pi(u)$ is $nm$ since $n\wedge m=1$. This shows the exactness of the bottom row. The exactness of the right column is trivial.
To show that the top row is exact we need to prove that $\phi|_{_{W_b^c(k,bn,cm)}}$ is surjective. By definition, the subgroup $W_b^c(k,bn,cm)$ is equal to $\llangle s,t^n,u^m\rrangle_{J(k,bn,cm)}$ and since $\phi$ is surjective we get that \begin{equation*}\im(\phi|_{_{W_b^c(k,bn,cm)}})=\llangle \phi(s),\phi(t)^n,\phi(u)^m\rrangle_{W^+_{k,bn,cm}}\underset{\text{Rem.}\, \ref{CenterNotPrime}}=\llangle a,b^{-n},(ba^{-1})^m\rrangle_{W^+_{k,bn,cm}}.
\end{equation*}

A presentation of $W^+_{k,bn,cm}/\im(\phi|_{_{W_b^c(k,bn,cm)}})$ is thus
\begin{equation}\label{PhiSurjCox}
\left\langle\begin{array}{c|c}
 a,b \, & \, a=b^{-n}=(ba^{-1})^m=1
\end{array}\right\rangle.
\end{equation}
Since $n$ and $m$ are coprime, Presentation \eqref{PhiSurjCox} is a presentation of the trivial group so that $\im(\phi|_{_{W_b^c(k,bn,cm)}})=W^+_{k,bn,cm}$, which is the desired equality.\\
Finally, since the middle and right columns are exact, the snake lemma induces an exact sequence 
\begin{equation}\label{exactsnake}
\begin{tikzcd}
	{\ker(\phi|_{_{W_b^c(k,bn,cm)}})} & {\ker(\phi)} & {\ker(\Z/nm\to 1)} \\
	{\coker(\phi|_{_{W_b^c(k,bn,cm)}})} & {\coker(\phi)} & {\coker(\Z/nm\to 1)}
	\arrow[from=1-1, to=1-2]
	\arrow[from=1-2, to=1-3]
	\arrow[from=1-3, to=2-1]
	\arrow[from=2-1, to=2-2]
	\arrow[from=2-2, to=2-3]
\end{tikzcd}
\end{equation}
By exactness of the middle and bottom rows of Diagram \eqref{DoubleComplex}, we have $\ker(\phi)=\langle stu\rangle$ and $\ker(\Z/nm\to 1)=\langle\pi(stu)\rangle$. Moreover, since all cokernels in \eqref{exactsnake} are trivial and the morphism $\ker(\phi|_{_{W_b^c(k,bn,cm)}})\to \langle stu\rangle $ is injective by construction, we get that the left column is exact.\\
\fbox{(2)+(3)}
If $W_b^c(k,bn,cm)$ is finite, using the correspondence between Presentation \eqref{GeneralPresW} and the BMR presentation and inspecting \cite[Tables 1-3]{BMR} gives the result.\\
Assume now that $W_b^c(k,bn,cm)$ is infinite. We have $$\ker(\phi|_{_{W_b^c(k,bn,cm)}})\subset Z(J(k,bn,cm))\cap W_b^c(k,bn,cm)\subset Z(W_b^c(k,bn,cm)).$$ Since $W_b^c(k,bn,cm)$ is infinite, Proposition \ref{CenterAlternating} applies and shows that $$\ker(\phi|_{_{W_b^c(k,bn,cm)}})=Z(W_b^c(k,bn,cm)),$$ which proves $(3)$. This in turn implies that $Z(W_b^c(k,bn,cm))=\langle (stu)^{nm}\rangle$ by exactness of the left column and last row of \eqref{DoubleComplex}.\\It remains to show that $(x_1\cdots x_ny)^mz^n=(stu)^{nm}$:
\begin{equation*}
    \begin{aligned}
       (x_1\cdots x_ny)^mz^n&=(s(tst^{-1})(t^2st^{-2})\cdots (t^{n-2}st^{2-n})(t^{n-1}st^{1-n})t^n)^m(u^m)^n\\
       &=((st)^n)^mu^{mn}=(stu)^{nm}\,  \text{since $(st)u=u(st)$.}
    \end{aligned}
\end{equation*}
This concludes the proof.
\end{proof}

\section{\texorpdfstring{Classification of $J$-reflection groups}{}}

The aim of this section is to provide a classification of $J$-reflection groups up to reflection isomorphisms. As stated in the next subsection, some results in this direction where proven in \cite{AA} and \cite{Gobet Toric}.
\subsection{\texorpdfstring{The reflection structure of $J$-reflection groups}{}}

An important notion for $J$-groups is that of reflecting hyperplanes, generalising the notion of reflecting hyperplanes of complex reflection groups. In this subsection, we define and study reflecting hyperplanes of $J$-reflection groups.\\
Recall that given a $J$-group $H=\llangle s^k,t^n,u^m\rrangle_G$ with parent $J$-group $G$, its set of reflections $R(H)$ is the set of conjugates in $G$ of non-trivial powers of $s^k,t^n$ or $u^m$ (see Definition \ref{DefReflections}).
\begin{definition}[\texorpdfstring{\cite[Definition 2.7]{Gobet Toric}}{}]\label{DefReflections}
Let $H$ be a $J$-group. Let $\sim$ be the equivalence relation on $R(H)$ generated by the relations $r^a\sim r^b$ for all $1\leq a,b<o(r)$, $r\in R(H)$ and write $[r]$ for the equivalence class of $r$ in $R(H)$. Now, define the set $\mathcal H(H)$ of \textbf{reflecting hyperplanes} of $H$ to be $\{[r]\}_{r\in R(H)}$.\\
Finally, for $r\in R(H)$, define $[r]_\mathfrak c$ as $\cup_{h\in H}[hrh^{-1}]$ and write $\mathcal H_\mathfrak c(H)$ for $\{[r]_\mathfrak c\}_{r\in R(H)}$.
\end{definition}

In the case where the $J$-group $H$ is finite, elements of $\mathcal H_\mathfrak c (H)$ are the unions of conjugacy classes of reflecting hyperplanes, so that $|\mathcal H_\mathfrak c (H)|$ is the number of conjugacy classes of reflecting hyperplanes. We keep this nomenclature for all $J$-groups and call the elements of $\mathcal H_\mathfrak c(H)$ the \textbf{conjugacy classes of reflecting hyperplanes} of $H$. 

\noindent
One can see (\cite[Example 2.14]{Gobet Toric}) that every rank two complex reflection group having a single conjugacy class of reflecting hyperplanes is a toric reflection group. It turns out that the latter groups are a generalisation of the former. This is formalised as follows:

\begin{lemma}[{\cite[Corollary 2.13]{Gobet Toric}}]\label{SingleW}
In a toric reflection group $W(k,n,m)$, all $x_i$'s are conjugate. As a consequence, by Proposition \ref{ToricReflections} the group $W(k,n,m)$ has a single conjugacy class of reflecting hyperplanes.
\end{lemma}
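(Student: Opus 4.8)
The plan is to prove the two assertions in turn: first that the generators $x_1,\dots,x_n$ form a single conjugacy class, and then to deduce the claim about reflecting hyperplanes directly from Proposition \ref{ToricReflections}.

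For the first assertion I would work entirely inside Presentation \eqref{PresToric}. Specialising the defining relation $x_i\cdots x_{i+m-1}=x_j\cdots x_{j+m-1}$ to the case $j=i+1$ gives, for every $i$ (indices modulo $n$),
\[
x_i\,(x_{i+1}\cdots x_{i+m-1})=(x_{i+1}\cdots x_{i+m-1})\,x_{i+m}.
\]
Writing $w:=x_{i+1}\cdots x_{i+m-1}$ (a nonempty word, since $m\geq 2$), this reads $x_{i+m}=w^{-1}x_i w$, so that $x_i$ and $x_{i+m}$ are conjugate in $W(k,n,m)$ for every $i$. As conjugacy is an equivalence relation, $x_i$ is conjugate to $x_{i+\ell m}$ for all $\ell\geq 0$, with indices read modulo $n$. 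Because $n\wedge m=1$, the residues $\{\ell m \bmod n : \ell\geq 0\}$ exhaust $\Z/n$, so the orbit $\{x_{1+\ell m}\}_\ell$ is all of $\{x_1,\dots,x_n\}$; hence all $x_i$'s are conjugate. (Equivalently, this is the case $b=c=1$ of Corollary \ref{xiConjugate}.)

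For the second assertion, let $r\in R(W(k,n,m))$ be any reflection. By Proposition \ref{ToricReflections}, $r$ is conjugate in $W(k,n,m)$ to a non-trivial power $x_i^a$ of some generator. By the first part $x_i$ is conjugate to $x_1$, so $x_i^a$ is conjugate to $x_1^a$, and therefore $r$ is conjugate to $x_1^a$. Unwinding Definition \ref{DefReflections}, the class $[x_1]_\mathfrak c=\bigcup_{h\in W(k,n,m)}[hx_1h^{-1}]$ is precisely the set of all conjugates of all non-trivial powers of $x_1$; the previous sentence shows every reflection lies in this set. Thus $R(W(k,n,m))=[x_1]_\mathfrak c$, whence $\mathcal H_\mathfrak c(W(k,n,m))=\{[x_1]_\mathfrak c\}$ is a singleton, i.e. $W(k,n,m)$ has a single conjugacy class of reflecting hyperplanes.

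The computation is elementary; the only point needing care is the index bookkeeping modulo $n$ combined with $n\wedge m=1$, which is what guarantees that repeatedly shifting by $m$ visits every index and thereby collapses the $n$ a priori distinct classes into one. The passage to reflecting hyperplanes is then immediate, since Proposition \ref{ToricReflections} has already reduced every reflection to a power of a generator.
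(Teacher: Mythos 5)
Your argument is correct and is essentially the approach the paper relies on: the paper quotes this statement from Gobet and, for the conjugacy of the $x_i$'s, proves the generalisation in Corollary \ref{xiConjugate} by exactly the same mechanism you use --- reading the defining relations of Presentation \eqref{PresToric} as conjugations shifting the index by $m$ modulo $n$ and invoking $n\wedge m=1$. The deduction about reflecting hyperplanes via Proposition \ref{ToricReflections} likewise matches the paper's treatment (compare Proposition \ref{NumberConj}).
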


We generalise Lemma \ref{SingleW} by showing that the number of conjugacy classes of a $J$-reflection group is governed solely by the fact that it contains some powers of $t$ and $u$ or not:
\begin{proposition}\label{NumberConj}
The number of conjugacy classes of reflecting hyperplanes of the $J$-reflection group $W_b^c(k,bn,cm)$ is $3-\delta_{1,b}-\delta_{1,c}$. More precisely, with the notation of Theorem \ref{Pres2} the set $\mathcal H_\mathfrak c(W_b^c(k,bn,cm))$ is equal to $\{[x_1]_\mathfrak c,[y]_\mathfrak c,[z]_\mathfrak c\}\backslash \{\emptyset\}$, where we define $[1]_\mathfrak c$ to be $\emptyset$.
\end{proposition}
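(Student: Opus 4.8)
The plan is to reduce the whole set $\mathcal H_\mathfrak c(W_b^c(k,bn,cm))$ to the three distinguished reflections $x_1$, $y$, $z$, and then to decide separately which of the corresponding classes are empty and whether the nonempty ones coincide. Throughout write $H := W_b^c(k,bn,cm)$ and $G := J(k,bn,cm)$, so that $x_1 = s$, $y = t^n$ and $z = u^m$ by Theorem \ref{Pres2}.

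First I would establish the inclusion $\mathcal H_\mathfrak c(H) \subseteq \{[x_1]_\mathfrak c, [y]_\mathfrak c, [z]_\mathfrak c\}$. By Lemma \ref{GeneratorsFINDANAME}, every reflection $r$ of $H$ is conjugate \emph{in $H$} to a nontrivial power of one of $x_1, \dots, x_n, y, z$. Since $[\cdot]_\mathfrak c$ depends only on the $H$-conjugacy class of its argument (reindexing the union defining $[r]_\mathfrak c$), and since all nontrivial powers of a fixed reflection lie in a single $\sim$-class by the very definition of $\sim$, it follows that $[r]_\mathfrak c \in \{[x_1]_\mathfrak c, \dots, [x_n]_\mathfrak c, [y]_\mathfrak c, [z]_\mathfrak c\}$. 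Corollary \ref{xiConjugate} then collapses the $x_i$ into one class, $[x_i]_\mathfrak c = [x_1]_\mathfrak c$ for all $i$, giving the claimed inclusion. As every member of $\mathcal H_\mathfrak c(H)$ is nonempty, while conversely $x_1 = s$ is a reflection and, when nontrivial, so are $y$ and $z$, this already yields $\mathcal H_\mathfrak c(H) = \{[x_1]_\mathfrak c, [y]_\mathfrak c, [z]_\mathfrak c\} \setminus \{\emptyset\}$, \emph{provided} we control which classes are empty and that the nonempty ones are pairwise distinct.

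Next I would determine which of the three classes is empty. By Lemma \ref{NoConjRef} the elements $t, t^2, \dots, t^{bn-1}$ are pairwise non-conjugate in $G$, hence distinct and nontrivial, so $t$ has order $bn$ and $y = t^n$ has order $b$; likewise $z = u^m$ has order $c$, whereas $x_1 = s$ has order $k \ge 2$. Consequently $[x_1]_\mathfrak c \ne \emptyset$ always, while $[y]_\mathfrak c = \emptyset$ exactly when $b = 1$ and $[z]_\mathfrak c = \emptyset$ exactly when $c = 1$; this is precisely what the subtracted terms $\delta_{1,b}$ and $\delta_{1,c}$ record.

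The substantive point, which I expect to be the main obstacle, is that the nonempty classes among $[x_1]_\mathfrak c, [y]_\mathfrak c, [z]_\mathfrak c$ are pairwise distinct; I would prove the stronger assertion that they are pairwise disjoint, by means of a \emph{type} invariant. By Lemma \ref{NoConjRef} a reflection of $H$ is a $G$-conjugate of a nontrivial power of exactly one of $s$, $t$, $u$; call this the type of the reflection. The type is visibly invariant under conjugation, and it is invariant under the relation $\sim$: if two reflections are nontrivial powers of a common reflection $\rho$ of type $s$, say, then writing $\rho = g s^c g^{-1}$ both of them are $G$-conjugates of powers of $s$, hence of type $s$ as well (and similarly for types $t$ and $u$). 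Consequently each of $[x_1]_\mathfrak c, [y]_\mathfrak c, [z]_\mathfrak c$ consists entirely of reflections of a single type---namely $s$, $t$, $u$ respectively---so the three classes are pairwise disjoint, and in particular distinct whenever nonempty. Combining the three steps gives $\mathcal H_\mathfrak c(H) = \{[x_1]_\mathfrak c, [y]_\mathfrak c, [z]_\mathfrak c\} \setminus \{\emptyset\}$, a set of cardinality $3 - \delta_{1,b} - \delta_{1,c}$, as claimed.
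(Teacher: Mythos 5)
Your proof is correct and follows essentially the same route as the paper's: Lemma \ref{GeneratorsFINDANAME} plus Corollary \ref{xiConjugate} to reduce to $\{[x_1]_\mathfrak c,[y]_\mathfrak c,[z]_\mathfrak c\}$, and Lemma \ref{NoConjRef} for distinctness. Your ``type'' argument is just a careful unwinding of why Lemma \ref{NoConjRef} separates the three classes (the paper packages the same observation as Lemma \ref{OrderFINDANAMERef}), and your explicit order computation for $y$ and $z$ makes the emptiness criterion precise where the paper leaves it implicit.
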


\begin{proof}
First, Lemma \ref{GeneratorsFINDANAME} shows that a reflection $r$ of $W_b^c(k,bn,cm)$ is conjugate to a power of an element of the set $\{x_1,\dots,x_n,y,z\}\backslash\{1\}$, thus $[r]_\mathfrak c$ belongs to $\{[x_1]_\mathfrak c,\dots,[x_n]_\mathfrak c,[y]_\mathfrak c,[z]_\mathfrak c\}\backslash \{\emptyset\}$. Corollary \ref{xiConjugate} then implies that for all $i=1,\dots,n$ we have $[x_i]_\mathfrak c=[x_1]_\mathfrak c$, so that $\mathcal H_\mathfrak c(W_b^c(k,bn,cm))=\{[x_1]_\mathfrak c,[y]_\mathfrak c,[z]_\mathfrak c\}\backslash \{\emptyset\}$.
Finally, Lemma $\ref{NoConjRef}$ ensures that $[x_1]_\mathfrak c$, $[y]_\mathfrak c$ and $[z]_\mathfrak c$ are all distinct, which shows that $\mathcal H_\mathfrak c(W_b^c(k,bn,cm))$ contains $3-\delta_{1,b}-\delta_{1,c}$ elements. This concludes the proof.
\end{proof}

As one would expect to be true, a very useful fact to classify $J$-reflection groups is that their number of conjugacy classes of reflecting hyperplanes is invariant under reflection isomorphisms. 

\begin{lemma}\label{InvariantPre}
Let $H_1,H_2$ be two $J$-reflection groups and assume that $H_1\xto\varphi H_2$ is a reflection isomorphism. Then for all $r\in R(H_1)$, we have $\varphi([r])=[\varphi(r)]$ and $\varphi([r]_\mathfrak c)=[\varphi(r)]_\mathfrak c$.
\end{lemma}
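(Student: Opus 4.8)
The plan is to exploit that $\varphi$ is simultaneously a group isomorphism and a bijection $R(H_1)\to R(H_2)$, so that it preserves all the data entering Definition \ref{DefReflections}: orders of elements, powers, conjugation, and membership in the reflection set. No new idea beyond this is needed; the content is bookkeeping.

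First I would establish the class-level equality $\varphi([r])=[\varphi(r)]$. Recall that $[r]$ is the equivalence class of $r$ under the relation $\sim$ generated by declaring $\rho^a\sim\rho^b$ for $\rho\in R(H_1)$ and $1\le a,b<o(\rho)$. Given $s\in[r]$, choose a connecting chain $r=r_0,r_1,\dots,r_l=s$ in $R(H_1)$ in which each consecutive pair consists of nontrivial powers $\rho_i^{a_i},\rho_i^{b_i}$ of a common reflection $\rho_i\in R(H_1)$. Applying $\varphi$ and using $\varphi(\rho_i^{a})=\varphi(\rho_i)^{a}$, the fact that a group isomorphism preserves order so $o(\varphi(\rho_i))=o(\rho_i)$, and $\varphi(\rho_i)\in R(H_2)$, each image pair $(\varphi(r_i),\varphi(r_{i+1}))$ is again a pair of nontrivial powers of the reflection $\varphi(\rho_i)$ with exponents bounded by $o(\varphi(\rho_i))$. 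Hence $\varphi(s)\sim\varphi(r)$, which gives $\varphi([r])\subseteq[\varphi(r)]$. Since $\varphi$ maps $R(H_1)$ bijectively onto $R(H_2)$, its inverse $\varphi^{-1}$ is again a reflection isomorphism in the sense of Definition \ref{DefReflectionIso}, so running the same argument for $\varphi^{-1}$ yields the reverse inclusion, whence $\varphi([r])=[\varphi(r)]$.

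Next I would deduce the statement for $[r]_\mathfrak c$ from the class-level one. Writing $[r]_\mathfrak c=\bigcup_{h\in H_1}[hrh^{-1}]$, I first note that $hrh^{-1}\in R(H_1)$ for every $h\in H_1$: indeed $hrh^{-1}$ lies in $H_1$ and remains a conjugate in the parent group of the relevant fixed power, so it is a reflection by Lemma \ref{RefHinG}. Applying $\varphi$ and invoking the first part on each class gives $\varphi([r]_\mathfrak c)=\bigcup_{h\in H_1}[\varphi(h)\varphi(r)\varphi(h)^{-1}]$. Because $\varphi$ is surjective, $\varphi(h)$ ranges over all of $H_2$ as $h$ ranges over $H_1$, so the right-hand side equals $\bigcup_{h'\in H_2}[h'\varphi(r)h'^{-1}]=[\varphi(r)]_\mathfrak c$. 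The only points that genuinely require care are that the generating relations of $\sim$ are transported verbatim by $\varphi$ (which hinges on preservation of order, so that the exponent bounds $1\le a,b<o(\rho)$ survive) and that $\varphi^{-1}$ is itself a reflection isomorphism, which is exactly what upgrades the one-sided inclusions into equalities; neither presents a real obstacle.
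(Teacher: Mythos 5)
Your proof is correct and follows essentially the same route as the paper: both arguments rest on the observation that $\varphi$ transports the generating relations $\rho^a\sim\rho^b$ of $\sim$ (using $\varphi(\rho)\in R(H_2)$ and preservation of orders), use that $\varphi^{-1}$ is again a reflection isomorphism to upgrade the inclusion to an equality, and then obtain the statement for $[r]_\mathfrak c$ by pushing the union over conjugates through the surjection $\varphi$. The only cosmetic difference is that you unpack ``generated by'' via explicit chains and prove two inclusions, whereas the paper shows $\varphi([r])$ is itself a $\sim$-class meeting $[\varphi(r)]$; these are the same argument.
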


\begin{proof}
Since $r\in [r]$ and $\varphi(r)\in [\varphi(r)]$, we have $\varphi([r])\cap[\varphi(r)]\neq \emptyset$ so that it is enough to show that $\varphi([r])$ is an equivalence class for $\sim$ in $H_2$ to prove that $\varphi([r])=[\varphi(r)]$.\\
Since $\varphi$ sends reflections to reflections, if $t$ is a reflection of $H_1$ and $t^a\neq 1$ then $\varphi(t^a)=\varphi(t)^a$ is a non trivial power of a reflection of $H_2$. In particular, since $\sim$ is generated by relations of the form $t^a\sim t^b$, we get that if $x\sim y$ in $H_1$, then $\varphi(x)\sim \varphi(y)$ in $H_2$. Using the same argument for $\varphi^{-1}$ shows that $x\sim y$ in $H_1$ if and only if $\varphi(x)\sim \varphi(y)$ in $H_2$.\\
Let then $x\in \varphi([r])$ and assume $x\sim y$. Then we have $\varphi^{-1}(x)\sim \varphi^{-1}(y)$ and since $\varphi^{-1}(x)\in [r]$ we have $\varphi^{-1}(y)\in [r]$, which shows that $y=\varphi(\varphi^{-1}(y))\in \varphi([r])$. This shows that $\varphi([r])=[\varphi(r)]$. We then have 
\begin{equation*}
\begin{aligned}
\varphi([r]_\mathfrak c)&=\varphi(\cup_{h\in H_1}[hrh^{-1}])=\cup_{h\in H_1}\varphi([hrh^{-1}]) \\&=\cup_{h\in H_1}[\varphi(h)\varphi(r)\varphi(h)^{-1}]=[\varphi(r)]_\mathfrak c \, \text{since $\varphi$ is an isomorphism.}
\end{aligned}
\end{equation*}
This concludes the proof.
\end{proof}
\begin{corollary}\label{InvariantHyperplane}
Let $H_1,H_2$ be two $J$-reflection groups and assume that $H_1\xto \varphi H_2$ is a reflection isomorphism. Then $\varphi$ induces a bijection $\mathcal H_\mathfrak c(H_1)\xto{\tilde\varphi} \mathcal H_\mathfrak c(H_2)$ and for each $h\in \mathcal H_\mathfrak c(H_1)$, the map $\varphi|_{_h}$ is a bijection from $h$ to $\tilde\varphi(h)$.
\end{corollary}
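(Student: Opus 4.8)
The plan is to build $\tilde\varphi$ directly from the set-theoretic image map induced by the bijection $\varphi\colon H_1\to H_2$, and then read off every required property from Lemma \ref{InvariantPre}. Concretely, I would define $\tilde\varphi$ on a conjugacy class of reflecting hyperplanes $[r]_\mathfrak c\in\mathcal H_\mathfrak c(H_1)$ — viewed as the subset $[r]_\mathfrak c\subseteq H_1$ — by $\tilde\varphi([r]_\mathfrak c):=\varphi([r]_\mathfrak c)$, the image of this subset under $\varphi$. Because this recipe depends only on the subset $[r]_\mathfrak c$ and not on the chosen representative $r$, the map is automatically well defined. Lemma \ref{InvariantPre} then gives $\varphi([r]_\mathfrak c)=[\varphi(r)]_\mathfrak c$; since $\varphi$ is a reflection isomorphism and $r$ is a reflection while $\varphi$ is injective, $\varphi(r)$ is a non-trivial reflection of $H_2$, so $[\varphi(r)]_\mathfrak c\in\mathcal H_\mathfrak c(H_2)$. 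Hence $\tilde\varphi$ indeed takes values in $\mathcal H_\mathfrak c(H_2)$.

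For bijectivity, I would first note that injectivity comes for free: $\varphi$ is a bijection of underlying sets, so its induced action on subsets of $H_1$ is injective, whence $\tilde\varphi([r]_\mathfrak c)=\tilde\varphi([r']_\mathfrak c)$ forces $[r]_\mathfrak c=[r']_\mathfrak c$. For surjectivity, the key observation is that $\varphi^{-1}$ is again a reflection isomorphism: since a reflection isomorphism sends $R(H_1)$ onto $R(H_2)$, its inverse sends $R(H_2)$ onto $R(H_1)$. Given $[r']_\mathfrak c\in\mathcal H_\mathfrak c(H_2)$ with $r'\in R(H_2)$, set $r:=\varphi^{-1}(r')\in R(H_1)$; then $\tilde\varphi([r]_\mathfrak c)=\varphi([r]_\mathfrak c)=[\varphi(r)]_\mathfrak c=[r']_\mathfrak c$ by Lemma \ref{InvariantPre}, establishing surjectivity.

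Finally, the statement about $\varphi|_h$ is immediate once $\tilde\varphi$ is in place: for $h=[r]_\mathfrak c\in\mathcal H_\mathfrak c(H_1)$ we have $\tilde\varphi(h)=\varphi(h)$ by construction, and the restriction of the bijection $\varphi$ to the subset $h$ is a bijection onto its image $\varphi(h)=\tilde\varphi(h)$. I do not expect any serious obstacle here: the entire content of the corollary is already carried by Lemma \ref{InvariantPre}, and the proof is essentially the bookkeeping that turns the equality $\varphi([r]_\mathfrak c)=[\varphi(r)]_\mathfrak c$ into a well-defined, bijective map of hyperplane sets. The only point that deserves an explicit word is the symmetry argument that $\varphi^{-1}$ is itself a reflection isomorphism, which is what powers the surjectivity step.
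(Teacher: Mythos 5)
Your proposal is correct and follows essentially the same route as the paper: both define $\tilde\varphi$ by $[r]_\mathfrak c\mapsto[\varphi(r)]_\mathfrak c$ (your set-image formulation is the same map by Lemma \ref{InvariantPre}), prove injectivity from the bijectivity of $\varphi$ together with that lemma, get surjectivity by applying the lemma to $\varphi^{-1}(r)$ for $r\in R(H_2)$, and deduce the statement about $\varphi|_{_h}$ from the equality $\varphi([r]_\mathfrak c)=[\varphi(r)]_\mathfrak c$. No gaps.
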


\begin{proof}
Let $r_1,r_2\in R(H_1)$. Then we have 
\begin{equation*}
\begin{aligned}
\, [r_1]_\mathfrak c=[r_2]_\mathfrak c &\Leftrightarrow\, \varphi([r_1]_\mathfrak c)=\varphi([r_2]_\mathfrak c) \, \text{since $\varphi$ is an isomorphism}\\
&\Leftrightarrow\,[\varphi(r_1)]_\mathfrak c=[\varphi(r_2)]_\mathfrak c \, \text{by Lemma \ref{InvariantPre}.}
\end{aligned}
\end{equation*}
This shows that the map $\mathcal{H}_\mathfrak c(H_1)\xto{\tilde\varphi} \mathcal H_\mathfrak c(H_2)$ sending $[r]_\mathfrak c$ to $[\varphi(r)]_\mathfrak c$ is well defined and injective. Moreover, Lemma \ref{InvariantPre} implies that for all $r\in R(H_2)$ we have $\varphi([\varphi^{-1}(r)]_\mathfrak c)=[r]_\mathfrak c$ so that $\tilde \varphi$ is surjective, thus bijective. The second part of the claim follows from Lemma \ref{InvariantPre}, by combining the fact that $\varphi$ is bijective with $\varphi([r]_\mathfrak c)=[\varphi(r)]_\mathfrak c$.
\end{proof}

\begin{corollary}\label{Numberof1}
If $W_b^c(k,bn,cm)\cong_{ref}W_{b'}^{c'}(k',b'n',c'm')$, we have $|\{b,c\}\backslash \{1\}|=|\{b',c'\}\backslash \{1\}|$.
\end{corollary}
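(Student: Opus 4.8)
The plan is to obtain this as an immediate consequence of the invariance of the number of conjugacy classes of reflecting hyperplanes, established in Corollary \ref{InvariantHyperplane}, combined with the explicit count provided by Proposition \ref{NumberConj}.

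First I would use the hypothesis directly: fix a reflection isomorphism $\varphi\colon W_b^c(k,bn,cm)\xto{\cong}W_{b'}^{c'}(k',b'n',c'm')$. By Corollary \ref{InvariantHyperplane}, the map $\varphi$ induces a bijection $\mathcal H_\mathfrak c(W_b^c(k,bn,cm))\xto{\tilde\varphi}\mathcal H_\mathfrak c(W_{b'}^{c'}(k',b'n',c'm'))$, so these two finite sets have equal cardinality. Applying Proposition \ref{NumberConj} to each side then gives
$$3-\delta_{1,b}-\delta_{1,c}=|\mathcal H_\mathfrak c(W_b^c(k,bn,cm))|=|\mathcal H_\mathfrak c(W_{b'}^{c'}(k',b'n',c'm'))|=3-\delta_{1,b'}-\delta_{1,c'},$$
whence $\delta_{1,b}+\delta_{1,c}=\delta_{1,b'}+\delta_{1,c'}$.

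It then remains only to rewrite this equality in the claimed form. Here I would invoke the elementary bookkeeping identity $|\{b,c\}\backslash\{1\}|=2-\delta_{1,b}-\delta_{1,c}$, obtained by noting that deleting the value $1$ from the pair $(b,c)$ removes exactly one entry for each coordinate equal to $1$; the same holds for $(b',c')$. Substituting the displayed equality yields $|\{b,c\}\backslash\{1\}|=2-(\delta_{1,b}+\delta_{1,c})=2-(\delta_{1,b'}+\delta_{1,c'})=|\{b',c'\}\backslash\{1\}|$, which is the desired conclusion. No genuine obstacle arises: all the substance is already contained in Proposition \ref{NumberConj} and Corollary \ref{InvariantHyperplane}, and the only point requiring care is that $\{b,c\}\backslash\{1\}$ must be read as counting the entries $b$ and $c$ separately (so that $b=c\neq 1$ contributes $2$), which is precisely what makes the identity $|\{b,c\}\backslash\{1\}|=2-\delta_{1,b}-\delta_{1,c}$ valid in every case.
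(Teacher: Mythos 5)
Your proof is correct and is exactly the paper's argument: the paper's proof is the one-line "It follows by combining Proposition \ref{NumberConj} and Corollary \ref{InvariantHyperplane}," and you have simply spelled out that combination. Your remark that $|\{b,c\}\backslash\{1\}|$ must be read multiset-wise for the identity $|\{b,c\}\backslash\{1\}|=2-\delta_{1,b}-\delta_{1,c}$ to hold is a fair and accurate observation about the notation, and is consistent with how the corollary is later used to split the classification into cases.
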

\begin{proof}
It follows by combining Proposition \ref{NumberConj} and Corollary \ref{InvariantHyperplane}.
\end{proof}

The last reflection isomorphism invariant datum of $J$-groups we will need to be able to classify $J$-reflection groups is the orders of their (conjugacy classes of) reflections:

\begin{definition}\label{OrderReflection}
The \textbf{orders} of a conjugacy class of reflecting hyperplanes $C$ is the set $O(C):=\{o(x),x\in C\}$.
With this notation, given a $J$-group $H$, we define $O(H)$ to be the multiset $\sqcup_{C\in \mathcal H_\mathfrak c(H)}O(C)$.
\end{definition}

\begin{remark}\label{OrderIsInvariant}
Since isomorphisms preserve orders, if two $J$-reflection groups $H_1$ and $H_2$ are isomorphic in reflection, Corollary \ref{InvariantHyperplane} implies that $O(H_1)=O(H_2)$.
\end{remark}

\begin{lemma}\label{OrderFINDANAMERef}
Let $H$ be a $J$-group with parent $J$-group $G$ and let $r\in R(H)$. Assume that $r$ is of the form $gx^ag^{-1}$ with $g\in G$ and $x\in \{s,t,u\}$. Then all elements of $[r]_\mathfrak c$ are conjugate in $G$ to a power of $x$.
\end{lemma}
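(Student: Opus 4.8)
The plan is to reduce the whole statement to the single combinatorial fact recorded in Lemma \ref{NoConjRef}, which says that distinct nontrivial powers of the canonical generators $s,t,u$ are never conjugate in $G$. First I would peel off the outer conjugation layer in the definition $[r]_{\mathfrak c}=\cup_{h\in H}[hrh^{-1}]$. Since $r=gx^ag^{-1}$ and $h\in H\subseteq G$, one has $hrh^{-1}=(hg)x^a(hg)^{-1}$, again a $G$-conjugate of a power of $x$; moreover $hrh^{-1}\in R(H)$ because $R(H)$ is stable under $G$-conjugation by its very definition (Definition \ref{reflections}). Hence it suffices to prove that for an arbitrary reflection $r'\in R(H)$ which is a $G$-conjugate of a nontrivial power of $x$, every element of the class $[r']$ is again a $G$-conjugate of a power of $x$; the lemma then follows by taking the union over $h\in H$.

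The core step is to analyse the relation $\sim$. By definition it is generated by relations $\rho^a\sim\rho^b$ with $\rho\in R(H)$ and $1\leq a,b<o(\rho)$, so any two $\sim$-equivalent reflections are joined by a finite chain whose consecutive terms are nontrivial powers of a common reflection. I would run an induction along such a chain, the base case being $r'$ itself, showing that each term is a $G$-conjugate of a power of $x$. For the inductive step, suppose a term equals $\rho^a=g''x^c(g'')^{-1}$ (a $G$-conjugate of a nontrivial power of $x$) and the next term is $\rho^b$, where $\rho=hz^eh^{-1}$ with $z\in\{s,t,u\}$. Then $z^{ea}$ is conjugate in $G$ to $x^c$; reducing both exponents modulo the orders of $z$ and $x$ yields two nontrivial elements of the canonical set $\{s^i\}\cup\{t^j\}\cup\{u^l\}$ that are conjugate in $G$, so Lemma \ref{NoConjRef} forces $z=x$. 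Consequently $\rho=hx^eh^{-1}$ is a $G$-conjugate of a power of $x$, and therefore so is $\rho^b=hx^{eb}h^{-1}$, which closes the induction.

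The main obstacle is precisely this inductive step: the relation $\sim$ may route through an auxiliary reflection $\rho$ whose "type" (which of $s,t,u$ it comes from) is a priori unknown, and one must exclude the possibility that a power of $\rho$ accidentally coincides with a power of a conjugate of $x$ originating from a different generator. Lemma \ref{NoConjRef} is exactly the tool that forbids this mixing, once the exponents have been reduced modulo the orders of the generators and the trivial element discarded. Everything else — the outer conjugation layer and the transitivity along the chain — is routine bookkeeping.
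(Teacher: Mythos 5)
Your proof is correct and follows essentially the same route as the paper: both reduce to the generating relations $\rho^a\sim\rho^b$ of $\sim$ and invoke Lemma \ref{NoConjRef} to force the two reflections involved to be conjugates of powers of the \emph{same} canonical generator. Your write-up merely makes explicit two steps the paper leaves implicit (the outer $H$-conjugation layer and the induction along a $\sim$-chain), which is fine.
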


\begin{proof}
Since $\sim$ is generated by relations of the form $r_1^p\sim r_1^q$, it is enough to show that if two reflections $r_1,r_2$ of $H$ satisfy $r_1^{p_1}=r_2^{p_2}$ for some $p_1,p_2\in \N^*$, then $r_1$ and $r_2$ are conjugate to some power of the same element of $\{s,t,u\}$.\\
By definition, there exists $g_1,g_2\in G$, $x_1,x_2\in \{s,t,u\}$ and $q_1,q_2\in \N^*$ such that $r_i=g_ix_i^{q_i}g_i^{-1}$ for $i=1,2$. We then have 
\begin{equation*}
g_1x_1^{p_1q_1}g_1^{-1}=r_1^{p_1}=r_2^{p_2}=g_2x_2^{p_2q_2}g_2^{-1}.
\end{equation*}
Lemma \ref{NoConjRef} thus implies that $x_1=x_2$, which concludes the proof.
\end{proof}

\subsection{\texorpdfstring{Classification of $J$-reflection groups}{}}
Theorem \ref{Classification1} is a first step towards the classification of $J$-groups up to reflection isomorphisms. In \cite{Gobet Toric}, Gobet extends this classification result to toric reflection groups:
\begin{theorem}[{\cite[Theorem 1.2]{Gobet Toric}}]\label{Classification2}
Let $H_1,H_2$ be toric reflection groups. Then $H_1$ and $H_2$ are isomorphic in reflection if and only if $C(H_1)=C(H_2)$.
\end{theorem}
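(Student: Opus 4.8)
The plan is to treat the two implications separately, the forward one being formal and the reverse one requiring an invariant that sees past the reflection data. For the \emph{if} direction, if $C(H_1)=C(H_2)$ then the two presentations differ only by a permutation of their columns, which by Remark \ref{isopermute} induces a group isomorphism $H_1\iso H_2$; as noted in the remark following Definition \ref{DefReflectionIso}, such column-permutation isomorphisms carry reflections to reflections, so $H_1\cong_{ref}H_2$.

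For the converse, suppose $\varphi\colon H_1=W(k,n,m)\to H_2=W(k',n',m')$ is a reflection isomorphism. Writing $H=W(k,n,m)=J\begin{pmatrix}k&n&m\\&n&m\end{pmatrix}$, its invariant $C(H)=\multl\binom{k}{1},\binom{n}{n},\binom{m}{m}\multr$ has a unique column with distinct entries, namely $\binom{k}{1}$; hence $C(H_1)=C(H_2)$ is equivalent to the two numerical conditions $k=k'$ and $\{n,m\}=\{n',m'\}$. I would first extract $k$ from the orders of reflections. By Proposition \ref{ToricReflections} and Corollary \ref{xiConjugate} every reflection of $W(k,n,m)$ is conjugate to a power $s^j$ of the order-$k$ generator $s$, so the single conjugacy class of reflecting hyperplanes has order set $O(W(k,n,m))=\{d:d\mid k,\ d\geq 2\}$. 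Since $O$ is preserved by reflection isomorphisms (Remark \ref{OrderIsInvariant}), these divisor sets coincide and their maxima give $k=k'$.

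It then remains to recover the unordered pair $\{n,m\}$, and here I would pass to the central quotient, since no reflection-theoretic datum distinguishes $n$ from $m$. If $H_1$ is finite then so is $H_2$, and Theorem \ref{Classification1} gives $C(H_1)=C(H_2)$ immediately. If $H_1$, hence $H_2$, is infinite, then the group isomorphism $\varphi$ sends $Z(H_1)$ to $Z(H_2)$ and descends to an isomorphism of central quotients; by Theorem \ref{CenterToric} these quotients are the von Dyck groups $W^+_{k,n,m}\cong W^+_{k',n',m'}$. An infinite $W^+_{k,n,m}$ is a cocompact group of isometries of the Euclidean or hyperbolic plane whose quotient orbifold is a sphere with three cone points of orders $k,n,m$; the three cone-point stabilisers are precisely the three conjugacy classes of maximal finite cyclic subgroups, so the multiset of their orders, equal to $\{k,n,m\}$, is an isomorphism invariant. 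Therefore $\{k,n,m\}=\{k',n',m'\}$, and deleting the common entry $k=k'$ (with multiplicity) yields $\{n,m\}=\{n',m'\}$, completing $C(H_1)=C(H_2)$.

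The genuine obstacle is this final extraction. All the reflection invariants within reach --- the number of conjugacy classes of reflecting hyperplanes (always $1$ for a toric group, by the toric specialisation of Proposition \ref{NumberConj}) and the order multiset $O(H)$ --- depend only on $k$, so the pair $\{n,m\}$ can be seen only through the finer structure of $H/Z(H)$. Making the cone-point argument rigorous requires checking that the three maximal finite cyclic subgroups are pairwise non-conjugate with the asserted orders (they fix distinct orbifold points, so this holds even when two of $k,n,m$ coincide), and isolating the finite case, where one appeals to Theorem \ref{Classification1} rather than to Fuchsian geometry; the only infinite Euclidean possibility compatible with $n\wedge m=1$ is the triple $\{2,3,6\}$, realised by $W(6,2,3)$, to which the same cone-point count applies.
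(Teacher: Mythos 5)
Your argument is correct. Note that the paper does not actually prove Theorem \ref{Classification2} — it is imported from \cite[Theorem 1.2]{Gobet Toric} — but your reconstruction uses precisely the ingredients on which both Gobet's original argument and this paper's proof of the generalisation (Theorem \ref{Classification3}, via Lemmas \ref{SameParent} and \ref{SameOrder}) rely: the order multiset of the single conjugacy class of reflecting hyperplanes recovers $k$, the central quotient $W^+_{k,n,m}$ together with Proposition \ref{MaximalSubgroup} recovers $\multl k,n,m\multr$ in the infinite case, and the finite case is delegated to Theorem \ref{Classification1}.
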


The goal of this subsection is to generalise Theorem \ref{Classification2} to all $J$-reflection groups.
\begin{theorem}\label{Classification3}
Let $H_1,H_2$ be $J$-reflection groups. Then $H_1$ and $H_2$ are isomorphic in reflection if and only if $C(H_1)=C(H_2)$.
\end{theorem}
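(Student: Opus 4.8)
The plan is to prove the two implications separately, the reverse one being the substantial part. For the easy direction, if $C(H_1)=C(H_2)$ then the two column multisets agree, so $H_2$ is obtained from $H_1$ by a permutation of its columns; by Remark \ref{isopermute} such a permutation extends to a group isomorphism, and by the remark following Definition \ref{DefReflectionIso} that isomorphism is a reflection isomorphism. Hence $H_1\cong_{ref}H_2$. The whole content is therefore in showing that a reflection isomorphism $\varphi\colon H_1\to H_2$ forces $C(H_1)=C(H_2)$, and I would do this by exhibiting $C(H)$ as an invariant of the reflection-isomorphism type and reading it off intrinsically. Since $\varphi$ is in particular a group isomorphism, $H_1$ is finite if and only if $H_2$ is; when both are finite they are rank two complex reflection groups and I would simply invoke Theorem \ref{Classification1}, whose correspondence between finite $J$-groups and the datum $C(H)$ gives $C(H_1)=C(H_2)$. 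So I may assume both groups are infinite.

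For $H=W_b^c(k,bn,cm)$ infinite I would collect the reflection-isomorphism invariants piece by piece. First, by Proposition \ref{NumberConj} the nonempty classes in $\mathcal H_\mathfrak c(H)$ are $[x_1]_\mathfrak c$ together with $[y]_\mathfrak c,[z]_\mathfrak c$ exactly when $b\geq 2$, resp. $c\geq 2$, and their number $3-\delta_{1,b}-\delta_{1,c}$ is preserved by $\varphi$ (Corollary \ref{Numberof1}). Second, for each such class $C$ the set $O(C)$ of Definition \ref{OrderReflection} is $\{d\geq 2: d\mid r_C\}$, where $r_C$ is the order of the corresponding reflection ($k$, $b$, resp. $c$); thus the \emph{ratio} $r_C=\max O(C)$ of the associated column is preserved because $\varphi$ matches classes while preserving their order sets (Remark \ref{OrderIsInvariant} and Corollary \ref{InvariantHyperplane}). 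What the reflections cannot see directly are the \emph{tops} $k,bn,cm$ (the orders of $s,t,u$ in the parent), nor the trivial columns $(n,n),(m,m)$ occurring when $b=1$ or $c=1$. To recover these I would pass to $\mathrm{Inn}(H)$, which by Theorem \ref{CenterFINDANAME}(3) is isomorphic to the infinite triangle rotation group $W^+_{k,bn,cm}$. Its abstract isomorphism type determines the unordered triple $\{k,bn,cm\}$, since the conjugacy classes of maximal finite subgroups are the cyclic cone-point stabilizers of orders $k,bn,cm$; this is rigidity of cocompact Fuchsian groups in the hyperbolic case, while the Euclidean triples $(2,3,6),(2,4,4),(3,3,3)$ give pairwise non-isomorphic groups $\mathbb Z^2\rtimes\mathbb Z/\ell$. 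As $\mathrm{Inn}$ is a group-isomorphism invariant, the multiset of tops is preserved by $\varphi$.

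It remains to match tops to ratios so as to reconstruct each column. Here I would use that, under $\phi\colon H\to W^+_{k,bn,cm}$, a reflection in $[y]_\mathfrak c$ maps to a nontrivial power of the order-$bn$ cone rotation (Lemma \ref{OrderFINDANAMERef}), whose centralizer in $W^+_{k,bn,cm}$ is precisely the cone cyclic group of order $bn$; likewise for $[z]_\mathfrak c$ and $[x_1]_\mathfrak c$. Hence the top of the column attached to a nontrivial class $C$ is the order of the centralizer, inside $\mathrm{Inn}(H)$, of the image of any reflection of $C$, and this is compatible with the bijection induced by $\varphi$ on classes. Combining this top with the ratio $\max O(C)$ reconstructs the full nontrivial column $(\mathrm{top},\mathrm{top}/\mathrm{ratio})$, and these match between $H_1$ and $H_2$. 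The tops not consumed by nontrivial classes are exactly those of the trivial columns $(N,N)$; since the total multiset of tops is preserved, the trivial columns match as well, giving $C(H_1)=C(H_2)$.

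The main obstacle is the structural input on $W^+_{k,bn,cm}$: that its isomorphism type recovers $\{k,bn,cm\}$ and that centralizers of torsion elements are the cone-point cyclic groups. This is clean for hyperbolic triples but the Euclidean ones and the borderline small parameters must be checked by hand; it is also precisely why the finite (spherical) case has to be quarantined and settled through Theorem \ref{Classification1}, since there centralizers may be strictly larger than the cone orders (for instance an order-$2$ element of $W^+_{2,3,3}\cong A_4$ has centralizer of order $4$), and the centralizer computation would fail. A cleaner substitute for the centralizer matching would be a purely arithmetic lemma asserting that the multiset of tops, the multiset of ratios, and the coprimality $n\wedge m=1$ together determine the bottoms $\{1,n,m\}$; I expect the coprimality hypothesis to be exactly what rules out the otherwise ambiguous configurations, so verifying that lemma (or the centralizer facts) is where the real work lies.
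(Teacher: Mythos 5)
Your reduction to the infinite case, the appeal to Theorem \ref{Classification1} for finite groups, and the easy direction via column permutations are all exactly as in the paper. Where you genuinely diverge is in the infinite case. The paper establishes the same invariants you list --- the number of classes of reflecting hyperplanes (Corollary \ref{Numberof1}), the multiset of ``tops'' $\multl k,bn,cm\multr$ via $\mathrm{Inn}$ and Proposition \ref{MaximalSubgroup} (Lemma \ref{SameParent}), and the multiset of ``ratios'' $\multl k,b,c\multr$ via reflection orders (Lemma \ref{SameOrder}) --- but it does \emph{not} try to pair tops with ratios intrinsically. Instead it enumerates the possible bijections $\tilde\varphi$ between hyperplane classes (Corollary \ref{InvariantHyperplane}) and, for each matching, uses the fact that $\varphi$ carries the normal closures $\llangle x_1\rrangle$, $\llangle y\rrangle$, $\llangle z\rrangle$ and their pairwise products to the corresponding ones (Remark \ref{ClosureConj}, Proposition \ref{PhiTransport}); this transports the quotient isomorphisms $H/\llangle y,z\rrangle\cong W(k,n,m)$, $H/\llangle x_1,z\rrangle\cong\Z/b$, $H/\llangle x_1,y\rrangle\cong\Z/c$ across $\varphi$, after which Theorem \ref{Classification2} and Lemma \ref{ToricCyclic} pin down the parameters, the ``crossed'' matchings either collapsing to degenerate configurations or being excluded via coprimality and Corollary \ref{nm=}. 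Your centralizer idea, if carried out, would replace this lengthy case analysis by a single computation, which would be a genuine simplification.

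The gap is that the computation is not carried out. The statement you need --- that in an infinite $W^+_{k,bn,cm}$ the centralizer of any nontrivial element of a maximal finite subgroup is that subgroup itself, so that the order of the centralizer of $\phi(r)$ in $\mathrm{Inn}(H)$ reads off the top $k$, $bn$ or $cm$ attached to the class of $r$ --- is strictly stronger than Proposition \ref{MaximalSubgroup} and appears nowhere in the paper; it requires the geometric action on the hyperbolic or Euclidean plane (an elliptic element has a unique fixed point, its centralizer preserves that point, and point stabilizers are cyclic), including a separate check of the three Euclidean triples. You correctly flag this, and the analogous caveat for your proposed arithmetic substitute, as ``where the real work lies'', but you do not do that work, so the argument is incomplete precisely at its crux. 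I would add that the arithmetic lemma (tops, ratios and $n\wedge m=1$ determine the columns) is plausible but far from obvious: the paper's case analysis is, in effect, a verification of exactly such a statement, and several of its subcases are only resolved by invoking coprimality at the last step, so neither substitute can simply be waved through.
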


In order to prove Theorem \ref{Classification3}, we need to derive some reflection invariants of $J$-reflection groups. To do so, we state the following useful result: 

\begin{proposition}[{\cite[Proposition 4.6]{Gobet Toric}}]\label{MaximalSubgroup}
Let $k,n,m\geq 2$ be such that $W^+_{k,n,m}$ is infinite. There are three conjugacy classes of maximal finite subgroups of $W^+_{k,n,m}$. The finite groups in these three classes are isomorphic to $\Z/k,\Z/n$ and $\Z/m$.
\end{proposition}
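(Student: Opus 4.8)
The plan is to realise $W^+_{k,n,m}$ geometrically as a discrete group of orientation-preserving isometries of a model plane and then read off all its finite subgroups from the fixed-point structure. Using the presentation $\langle a,b\mid a^k=b^n=(ba^{-1})^m=1\rangle$ together with the substitution $x=a$, $y=b^{-1}$, one checks that $W^+_{k,n,m}\cong\langle x,y\mid x^k=y^n=(xy)^m=1\rangle$, the ordinary (von Dyck) $(k,n,m)$-triangle group. Since $W^+_{k,n,m}$ is assumed infinite we have $\tfrac1k+\tfrac1n+\tfrac1m\le 1$, so there is a geodesic triangle $T$ with angles $\tfrac\pi k,\tfrac\pi n,\tfrac\pi m$ in $X:=\mathbb E^2$ (when the angle sum equals $\pi$) or $X:=\mathbb H^2$ (when it is smaller). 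By Poincar\'{e}'s polygon theorem the reflections in the sides of $T$ generate a discrete group realising the full triangle Coxeter group $\Gamma$, and $W^+_{k,n,m}$ is exactly its orientation-preserving index-two subgroup, generated by the rotations $x,y,xy$ of angles $\tfrac{2\pi}k,\tfrac{2\pi}n,\tfrac{2\pi}m$ about the three vertices of $T$, so that $\langle x\rangle\cong\Z/k$, $\langle y\rangle\cong\Z/n$, $\langle xy\rangle\cong\Z/m$.

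First I would show that every finite subgroup fixes a point: as $X$ is a complete CAT$(0)$ space, a standard fixed-point theorem for finite isometry groups (the circumcentre of a bounded orbit is fixed) shows that any finite $F\le W^+_{k,n,m}$ fixes some $p\in X$. Next I would identify point stabilisers: an orientation-preserving isometry of $X$ fixing $p$ is a rotation about $p$, so $\mathrm{Stab}(p)$ embeds in $SO(2)$ and is finite cyclic. The quotient orbifold $X/W^+_{k,n,m}$ is the sphere $\mathbb S^2(k,n,m)$ with exactly three cone points, of orders $k,n,m$; hence the points of $X$ with nontrivial stabiliser form exactly three $W^+_{k,n,m}$-orbits, whose stabilisers are precisely the conjugates of $\langle x\rangle$, $\langle y\rangle$ and $\langle xy\rangle$.

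Putting these together finishes the proof: any finite $F$ lies in some $\mathrm{Stab}(p)$, which is conjugate to one of $\langle x\rangle,\langle y\rangle,\langle xy\rangle$ (or trivial), so these three cyclic groups are exactly the maximal finite subgroups up to conjugacy. They are pairwise non-conjugate because they stabilise points lying in three distinct orbits, which remains true even if some of $k,n,m$ coincide. This yields exactly three conjugacy classes, of orders $k,n,m$. As an alternative for the containment step one could instead use the identification of $W^+_{k,n,m}$ with the even subgroup of $\Gamma$ and the fact that every finite subgroup of a Coxeter group is conjugate into a finite standard parabolic; the maximal finite parabolics of $\Gamma$ are the three dihedral groups $D_k,D_n,D_m$, and intersecting with $W^+_{k,n,m}$ leaves exactly their cyclic rotation subgroups.

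The hard part will be the non-conjugacy of the three subgroups when two (or all) of the labels agree: neither the abelianisation nor the naive Coxeter-combinatorial criterion (comparing edge labels of the parabolics) can separate them, so one genuinely needs the geometric input that the three corners of $T$ represent distinct cone points of $X/W^+_{k,n,m}$. Making this orbifold/cone-point structure precise — equivalently, extracting discreteness and the presentation from Poincar\'{e}'s theorem — is therefore the technical heart; by contrast the fixed-point and cyclic-stabiliser steps are soft.
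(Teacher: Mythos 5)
This statement is not proved in the paper at all: it is imported verbatim from \cite[Proposition 4.6]{Gobet Toric}, so there is no in-paper argument to compare yours against. Your proof is correct and is the classical geometric one: after the substitution $x=a$, $y=b^{-1}$ the presentation \eqref{PresAlternate} becomes the von Dyck presentation $\langle x,y\mid x^k=y^n=(xy)^m=1\rangle$, which (under the hypothesis $\tfrac1k+\tfrac1n+\tfrac1m\le 1$) is realised faithfully as the orientation-preserving index-two subgroup of the geometric $(k,n,m)$-triangle reflection group acting properly discontinuously and cocompactly on $\mathbb{E}^2$ or $\mathbb{H}^2$; the Bruhat--Tits circumcentre argument, the fact that orientation-preserving point stabilisers are finite cyclic rotation groups, and the identification of the singular locus with the three vertex orbits then give exactly three conjugacy classes of maximal finite subgroups, of orders $k,n,m$. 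The one step you should spell out a little more is the faithfulness of the von Dyck presentation for the rotation subgroup (equivalently, that the abstract even subgroup of the Coxeter group maps isomorphically onto the geometric rotation group); this follows from Poincar\'e's theorem applied to the quadrilateral $T\cup rT$, or from the Bourbaki exercise the paper itself cites for the identification of $W^+_{k,n,m}$ with the alternating subgroup. Your resolution of the non-conjugacy issue when labels coincide --- a nontrivial rotation fixes a unique point, and the three vertices lie in distinct orbits because $T$ is a strict fundamental domain for the full reflection group --- is exactly the right argument and is where a purely combinatorial approach via finite parabolics would leave a gap, as you correctly note.
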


\begin{remark}\label{CoxFinite}
After an inspection of \cite[Table 1]{AA} and using the classification of finite Coxeter groups (see \cite{CoxeterClassification}), we get that $J(k,n,m)$ is infinite if and only if $W^+_{k,n,m}$ is infinite.
\end{remark}


These facts provide a useful reflection invariant of $J$-reflection groups. 
\begin{lemma}\label{SameParent}
Let $b,b',c,c',k,k',n,n',m,m'\in \N^*$ with $k,k',bn,b'n',cm,c'm'\geq 2$ and assume that $n$ and $m$ are coprime and $n'$ and $m'$ are coprime. If $W_b^c(k,bn,cm)$ and $W_{b'}^{c'}(k',b'n',c'm')$ are isomorphic and infinite, the multisets $\multl k,bn,cm\multr$ and $\multl k',b'n',c'm'\multr$ are equal.
\end{lemma}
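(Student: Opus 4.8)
The strategy is to extract from an abstract group isomorphism a purely group-theoretic invariant that recovers the multiset $\multl k,bn,cm\multr$. The key observation is that this multiset is encoded in the isomorphism types of the maximal finite subgroups of the associated alternating Coxeter-type group $W^+_{k,bn,cm}$, which in turn is recovered from $W_b^c(k,bn,cm)$ as its inner automorphism group.

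First I would pass to inner automorphism groups. Since any group isomorphism carries the center onto the center, an isomorphism $W_b^c(k,bn,cm)\cong W_{b'}^{c'}(k',b'n',c'm')$ descends to an isomorphism of the quotients by their centers, that is $\Inn(W_b^c(k,bn,cm))\cong\Inn(W_{b'}^{c'}(k',b'n',c'm'))$. By Theorem \ref{CenterFINDANAME}(3) the left-hand side is isomorphic to $W^+_{k,bn,cm}$ and the right-hand side to $W^+_{k',b'n',c'm'}$, so $W^+_{k,bn,cm}\cong W^+_{k',b'n',c'm'}$.

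Next I would confirm that these groups are infinite, so that Proposition \ref{MaximalSubgroup} applies. The group $W_b^c(k,bn,cm)$ is a normal subgroup of $J(k,bn,cm)$ with quotient $\Z/nm$ by Remark \ref{QuotientAbParent}, hence it has finite index; thus $W_b^c(k,bn,cm)$ is infinite if and only if $J(k,bn,cm)$ is, and by Remark \ref{CoxFinite} this happens if and only if $W^+_{k,bn,cm}$ is infinite. As $W_b^c(k,bn,cm)$ is infinite by hypothesis, both $W^+_{k,bn,cm}$ and $W^+_{k',b'n',c'm'}$ are infinite.

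Finally I would read off the invariant. By Proposition \ref{MaximalSubgroup}, the maximal finite subgroups of $W^+_{k,bn,cm}$ fall into exactly three conjugacy classes, represented by cyclic groups isomorphic to $\Z/k$, $\Z/bn$ and $\Z/cm$ (all of order $\geq 2$ by hypothesis). The assignment to a group of the multiset of isomorphism types of its maximal finite subgroups, counted by conjugacy class, is preserved by any isomorphism, since an isomorphism sends maximal finite subgroups to maximal finite subgroups and conjugate subgroups to conjugate subgroups. Hence $\multl \Z/k,\Z/bn,\Z/cm\multr=\multl \Z/k',\Z/b'n',\Z/c'm'\multr$ as multisets of isomorphism classes of groups; since $\Z/a\cong\Z/a'$ forces $a=a'$, this yields $\multl k,bn,cm\multr=\multl k',b'n',c'm'\multr$. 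The one point requiring care is the transfer of infiniteness from the $J$-reflection groups to the alternating groups $W^+$, ensuring Proposition \ref{MaximalSubgroup} is genuinely applicable; once that is in place, the maximal-finite-subgroup invariant does all the work.
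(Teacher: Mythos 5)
Your proposal is correct and follows essentially the same route as the paper: pass to inner automorphism groups, identify them with $W^+_{k,bn,cm}$ via Theorem \ref{CenterFINDANAME}(3), transfer infiniteness via Remark \ref{CoxFinite}, and read off the multiset from the conjugacy classes of maximal finite subgroups given by Proposition \ref{MaximalSubgroup}. Your extra care about why infiniteness of $W_b^c(k,bn,cm)$ forces infiniteness of $W^+_{k,bn,cm}$ (via the finite-index inclusion in the parent $J$-group) is a detail the paper leaves implicit, but the argument is the same.
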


\begin{proof}
If $W_b^c(k,bn,cm)$ and $W_{b'}^{c'}(k',b'n',c'm')$ are (abstractly) isomorphic, their inner automorphism groups are isomorphic as well. By Theorem \ref{CenterFINDANAME}, this implies that $W^+_{k,bn,cm}$ is isomorphic to $W^+_{k',b'n',c'm'}$. Now, using Remark \ref{CoxFinite} we get that both $W^+_{k,bn,cm}$ and  $W^+_{k',b'n',c'm'}$ are infinite so that Proposition \ref{MaximalSubgroup} applies, which concludes the proof.
\end{proof}

\begin{lemma}\label{SameOrder}
Let $b,b',c,c',k,k',n,n',m,m'\in \N^*$ with $k,k',bn,b'n',cm,c'm'\geq 2$, and assume that $n\wedge m=1=n'\wedge m'$. If $W_b^c(k,bn,cm)$ is isomorphic in reflection to $W_{b'}^{c'}(k',b'n',c'm')$, the multisets $\multl k,b,c\multr$ and $\multl k',b',c'\multr$ are equal.
\end{lemma}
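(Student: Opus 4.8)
The plan is to prove the lemma through the reflection invariant $O(H)$ of Definition \ref{OrderReflection}, which records the orders occurring in each conjugacy class of reflecting hyperplanes. By Remark \ref{OrderIsInvariant}, the hypothesis $W_b^c(k,bn,cm)\cong_{ref}W_{b'}^{c'}(k',b'n',c'm')$ immediately yields the multiset equality $O(W_b^c(k,bn,cm))=O(W_{b'}^{c'}(k',b'n',c'm'))$. Thus everything reduces to two independent tasks: first, computing $O(W_b^c(k,bn,cm))$ explicitly in terms of $k,b,c$; and second, checking that this multiset determines $\multl k,b,c\multr$.

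For the computation, let $G=J(k,bn,cm)$ with canonical generators $s,t,u$, and write $D(N):=\{d\in\N : d\mid N,\ d>1\}$. By Proposition \ref{NumberConj} we have $\mathcal H_\mathfrak c(H)=\{[x_1]_\mathfrak c,[y]_\mathfrak c,[z]_\mathfrak c\}\setminus\{\emptyset\}$, where $x_1=s$, $y=t^n$, $z=u^m$. Lemma \ref{NoConjRef} shows $s,t,u$ have orders $k,bn,cm$ in $G$ (hence in $H$), so that $o(x_1)=k$, $o(y)=b$ and $o(z)=c$. Since $\{s,s^2,\dots,s^{k-1}\}\subseteq[x_1]$ realizes every order in $D(k)$, while Lemma \ref{OrderFINDANAMERef} forces every element of $[x_1]_\mathfrak c$ to be $G$-conjugate to a power of $s$ (so of order dividing $k$), I would conclude $O([x_1]_\mathfrak c)=D(k)$, and analogously $O([y]_\mathfrak c)=D(b)$ and $O([z]_\mathfrak c)=D(c)$. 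Note these are consistent even when $b=1$ or $c=1$, since then the class is empty and $D(1)=\emptyset$. As a multiset one obtains $O(H)=D(k)\sqcup D(b)\sqcup D(c)$.

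For the recovery, let $f(N)$ denote the multiplicity of $N$ in $\multl k,b,c\multr$. For each $d\ge 2$, the multiplicity of $d$ in $O(H)=D(k)\sqcup D(b)\sqcup D(c)$ is exactly the number of entries of $\multl k,b,c\multr$ divisible by $d$, i.e. $g(d):=\sum_{d\mid N}f(N)$. Möbius inversion over the divisibility lattice then recovers $f(N)=\sum_{N\mid M}\mu(M/N)\,g(M)$ for every $N\ge 2$ (finite sums, as $f$ is finitely supported, with $\mu$ the number-theoretic Möbius function). Concretely, $\max O(H)=\max(k,b,c)$ occurs with multiplicity equal to the number of $k,b,c$ equal to it, and peeling off the corresponding copies of $D(\max)$ and inducting downward reconstructs the sub-multiset of entries $\ge 2$ of $\multl k,b,c\multr$. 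Since $k,k'\ge 2$ and both $\multl k,b,c\multr$ and $\multl k',b',c'\multr$ have exactly three elements, the number of entries equal to $1$ (namely $3$ minus the number of entries $\ge 2$) also agrees, giving $\multl k,b,c\multr=\multl k',b',c'\multr$.

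The main obstacle is the bookkeeping in the second step, specifically verifying that no spurious orders enter $O([y]_\mathfrak c)$ and $O([z]_\mathfrak c)$. Ruling these out requires combining Lemma \ref{OrderFINDANAMERef} with the identification $R(H)=R(G)\cap H$ (Lemma \ref{RefHinG}) and Lemma \ref{NoConjRef}: a conjugate of $t^i$ with $n\nmid i$ cannot be a reflection of $H$ (as it is not conjugate to any power of $t^n$), and likewise for $u^i$ with $m\nmid i$, so the orders occurring in $[y]_\mathfrak c$ and $[z]_\mathfrak c$ are precisely the divisors $>1$ of $b$ and of $c$. Once this is in place, the arithmetic recovery in the third step is routine and the lemma follows.
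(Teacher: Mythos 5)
Your proposal is correct and follows essentially the same route as the paper: both use the invariant $O(H)$ of Remark \ref{OrderIsInvariant}, compute it via Proposition \ref{NumberConj} and Lemma \ref{OrderFINDANAMERef} as the disjoint union of the divisor sets of $k$, $b$ and $c$, and then recover the multiset $\multl k,b,c\multr$ by peeling off maxima (your Möbius-inversion phrasing is an equivalent packaging of the same recovery). The only cosmetic difference is that you treat the cases $b=1$ or $c=1$ uniformly by working with divisors $>1$ and counting total entries, whereas the paper handles $1\in\{b,c\}$ as a separate (similar) case.
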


\begin{proof}
We do the proof for $b$ and $c$ different from one. Recall that $O(W_b^c(k,bn,cm))$ is the multiset consisting of the orders of the conjugacy classes of reflecting hyperplanes of $W_b^c(k,bn,cm)$. By Corollary \ref{Numberof1}, the fact that $1\notin \{b,c\}$ implies that $b'$ and $c'$ are different from one as well. In this case, Proposition \ref{NumberConj} implies that $\mathcal H_\mathfrak c(W_b^c(k,bn,cm))=\{[x_1]_\mathfrak c,[y]_\mathfrak c,[z]_\mathfrak c\}$. Now, Lemma \ref{OrderFINDANAMERef} implies that $O(W_b^c(k,bn,cm))=\mathrm d (k)\sqcup \mathrm d(b)\sqcup \mathrm d(c),$ where $\mathrm d(l)$ denotes the set of positive divisors of $l$.\\
Similarly, we have $O(W_{b'}^{c'}(k',b'n',c'm'))=\mathrm d (k')\sqcup \mathrm d(b')\sqcup \mathrm d(c')$. Since $W_{b}^{c}(k,n,m)$ and $W_{b'}^{c'}(k',b'n',c'm')$ are isomorphic in reflection, Remark \ref{OrderIsInvariant} implies that $\mathrm d (k)\sqcup \mathrm d(b)\sqcup \mathrm d(c)=\mathrm d (k')\sqcup \mathrm d(b')\sqcup \mathrm d(c')$. This identifies $M=\max(k,b,c)$ with $M'=\max(k',b',c')$. Removing $\mathrm d(M)=\mathrm d(M')$ from those sets, we can then identify $\max(\{k,b,c\}\backslash\{M\})$ with $\max(\{k',b',c'\}\backslash \{M'\})$. Repeating this process one more time concludes the proof for $b$ and $c$ different than one. A similar proof applies for the case $1\in \{b,c\}$, which concludes the proof.
\end{proof}

\begin{corollary}\label{nm=}
If $W_b^c(k,bn,cm)$ and $W_{b'}^{c'}(k',b'n',c'm')$ are isomorphic in reflection, we have $nm=n'm'$.
\end{corollary}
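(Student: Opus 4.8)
The plan is to combine the two reflection invariants already isolated in the preceding lemmas, by comparing the \emph{products} of the entries of the two relevant multisets. Suppose $W_b^c(k,bn,cm)\cong_{ref}W_{b'}^{c'}(k',b'n',c'm')$. In particular the two groups are abstractly isomorphic, so they are simultaneously finite or infinite, which gives a clean split into two cases.

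The first step is to record the multiset equality
\[
\multl k,\,bn,\,cm\multr=\multl k',\,b'n',\,c'm'\multr .
\]
In the infinite case this is exactly Lemma \ref{SameParent} (whose hypothesis only needs an abstract isomorphism, which a reflection isomorphism provides). In the finite case both groups are rank two complex reflection groups, and I would read this equality off the finite classification \cite[Table 1]{AA}; equivalently, one checks directly that reflection-isomorphic finite $J$-groups carry the same datum $C$, which encodes precisely the triple $(k,bn,cm)$ together with $(n,m)$. Taking the product of the three entries on each side then yields
\[
k\cdot bn\cdot cm=k'\cdot b'n'\cdot c'm',\qquad\text{that is}\qquad (kbc)\,nm=(k'b'c')\,n'm'.
\]

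The second step is to invoke Lemma \ref{SameOrder}, which gives $\multl k,b,c\multr=\multl k',b',c'\multr$ and hence $kbc=k'b'c'$ (this holds uniformly, without any finiteness assumption). Since all quantities are positive integers and $kbc=k'b'c'\neq 0$, dividing the displayed identity by this common value isolates $nm=n'm'$, which is the claim.

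The one genuine obstacle is that Lemma \ref{SameParent} is available only for infinite groups: its proof rests on Proposition \ref{MaximalSubgroup}, which concerns infinite $W^+_{k,n,m}$. Consequently the finite case cannot be obtained from the structural argument and must instead be verified against the classification tables of \cite{AA}. Everything beyond that is the elementary observation that the product of the entries of $\multl k,bn,cm\multr$ factors as $(kbc)(nm)$, so that cancelling the factor $kbc$ pinned down by Lemma \ref{SameOrder} recovers $nm$.
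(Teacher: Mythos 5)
Your proof is correct and follows essentially the same route as the paper's own two-line argument: Lemma \ref{SameParent} gives $kbc\,nm=k'b'c'\,n'm'$, Lemma \ref{SameOrder} gives $kbc=k'b'c'$, and dividing yields $nm=n'm'$. Your attention to the finite case is in fact more scrupulous than the paper, which invokes Lemma \ref{SameParent} without comment even though that lemma is stated only for infinite groups; your patch via the finite classification of \cite{AA} (equivalently, one may observe that the corollary is only ever applied after the reduction to infinite groups in the proof of Theorem \ref{Classification3}) closes that small gap.
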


\begin{proof}
Lemma \ref{SameParent} implies that $kbcnm=k'b'c'n'm'$ and Lemma \ref{SameOrder} implies that $kbc=k'b'c'$. This concludes the proof.
\end{proof}

We are almost ready to prove Theorem \ref{Classification3}. The last ingredients needed are the following results:
\begin{remark}\label{QuotientOrder}
Combining Presentation \eqref{GeneralPresW} with Corollary \ref{xiConjugate} gives us the following isomorphisms for all $J$-reflection groups $W_b^c(k,bn,cm)$:\\
(i) $W_b^c(k,bn,cm)/\llangle x_1,y\rrangle\cong \Z/c$\\
(ii) $W_b^c(k,bn,cm)/\llangle x_1,z\rrangle\cong \Z/b$.
\end{remark}

\begin{remark}\label{ClosureConj}
Let $b,c,k,n,m\in \N^*$ with $k,bn,cm\geq 2$ and assume that $n$ and $m$ are coprime. Let $H=W_b^c(k,bn,cm)$. For all $r\in \{x_1,y,z\}$, we have $\llangle [r]\rrangle_H=\llangle [r]_\mathfrak c\rrangle_H$. Indeed, all elements of $[r]_\mathfrak c$ are conjugate to a power of $r$, hence are in $\llangle r\rrangle_H$.
\end{remark}
\begin{proposition}\label{PhiTransport}
Let $H_1,H_2$ be $J$-reflection groups having three conjugacy classes of reflecting hyperplanes and assume that there is a reflection isomorphism $H_1\xto \varphi H_2$. Write $a_1,p,q$ the elements of $H_2$ corresponding to $x_1,y,z$ in Presentation \eqref{GeneralPresW}. Finally, let $v,w\in \{x_1,y,z\}$ and $r,s\in \{a_1,p,q\}$ be such that $\tilde\varphi([v]_\mathfrak c)=[r]_\mathfrak c$ and $\tilde \varphi([w]_\mathfrak c)=[s]_\mathfrak c$ where $\tilde\varphi$ is as in Corollary \ref{InvariantHyperplane}.\\
Then we have $\varphi(\llangle v,w\rrangle_{H_1})=\llangle r,s\rrangle_{H_2}$.
\end{proposition}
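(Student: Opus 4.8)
The plan is to push everything through the isomorphism $\varphi$, using that it carries the normal closure of a set to the normal closure of the image, and that by Remark \ref{ClosureConj} the normal closure generated by one of the distinguished reflections coincides with the normal closure generated by its whole conjugacy class of reflecting hyperplanes, an object that $\varphi$ transports in a controlled way by Lemma \ref{InvariantPre}.

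First I would record the formal identity $\varphi(\llangle v,w\rrangle_{H_1})=\llangle\varphi(v),\varphi(w)\rrangle_{H_2}$, valid for any isomorphism since the image of the smallest normal subgroup containing $\{v,w\}$ is the smallest normal subgroup containing $\{\varphi(v),\varphi(w)\}$. This reduces the claim to proving $\llangle\varphi(v),\varphi(w)\rrangle_{H_2}=\llangle r,s\rrangle_{H_2}$.

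The central step is to identify $\llangle\varphi(v)\rrangle_{H_2}$ with $\llangle r\rrangle_{H_2}$ (and $\llangle\varphi(w)\rrangle_{H_2}$ with $\llangle s\rrangle_{H_2}$). The three conjugacy classes hypothesis guarantees that $v,w\in\{x_1,y,z\}$ and $r,s\in\{a_1,p,q\}$ are genuine nontrivial reflections, so Remark \ref{ClosureConj} applies to each of them. On the $H_1$ side it gives $\llangle v\rrangle_{H_1}=\llangle[v]_\mathfrak c\rrangle_{H_1}$; applying $\varphi$ and invoking $\varphi(\llangle X\rrangle_{H_1})=\llangle\varphi(X)\rrangle_{H_2}$ together with $\varphi([v]_\mathfrak c)=[\varphi(v)]_\mathfrak c$ from Lemma \ref{InvariantPre}, this becomes $\llangle\varphi(v)\rrangle_{H_2}=\llangle[\varphi(v)]_\mathfrak c\rrangle_{H_2}$. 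By hypothesis $[\varphi(v)]_\mathfrak c=\tilde\varphi([v]_\mathfrak c)=[r]_\mathfrak c$, so the right-hand side is $\llangle[r]_\mathfrak c\rrangle_{H_2}$, which by Remark \ref{ClosureConj} applied to $r\in\{a_1,p,q\}$ equals $\llangle r\rrangle_{H_2}$. Hence $\llangle\varphi(v)\rrangle_{H_2}=\llangle r\rrangle_{H_2}$, and symmetrically $\llangle\varphi(w)\rrangle_{H_2}=\llangle s\rrangle_{H_2}$.

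Finally I would assemble these, using that the normal closure of a two-element set is the product (join) of the individual normal closures:
\[
\llangle\varphi(v),\varphi(w)\rrangle_{H_2}=\llangle\varphi(v)\rrangle_{H_2}\,\llangle\varphi(w)\rrangle_{H_2}=\llangle r\rrangle_{H_2}\,\llangle s\rrangle_{H_2}=\llangle r,s\rrangle_{H_2},
\]
which with the first step yields the assertion. The only delicate point is the central step: one must be careful to transport the class-level equality of Remark \ref{ClosureConj} rather than trying to match $\varphi(v)$ with $r$ as individual elements (they need only lie in the same hyperplane class, a priori as different powers), and it is precisely Lemma \ref{InvariantPre} that makes this transport legitimate.
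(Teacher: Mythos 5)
Your proposal is correct and follows essentially the same route as the paper: both arguments decompose $\llangle v,w\rrangle_{H_1}$ as the product $\llangle v\rrangle_{H_1}\llangle w\rrangle_{H_1}$, use Remark \ref{ClosureConj} on each side to pass between a single reflection and its full class $[\cdot]_\mathfrak c$, and transport the classes via Lemma \ref{InvariantPre} and the hypothesis on $\tilde\varphi$. Your explicit warning that $\varphi(v)$ and $r$ need only share a hyperplane class, not coincide as elements, is exactly the point the chain of equalities in the paper's computation is designed to handle.
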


\begin{proof}
First, recall that given a group $G$ and two normal subgroups $N,M\lhd G$, we have $NM=\langle N,M\rangle$ and if $N=\llangle X\rrangle_G$, $M=\llangle Y\rrangle_G$, we have $NM=\llangle X\cup Y\rrangle_G$. Now, we have
\begin{equation*}
    \begin{aligned}
        \varphi(\llangle v,w\rrangle_{H_1})&=\varphi(\llangle v\rrangle_{H_1}\llangle w\rrangle_{H_1})=\varphi(\llangle v\rrangle_{H_1})\varphi(\llangle w\rrangle_{H_1})\\&=\varphi(\llangle [v]_\mathfrak c\rrangle_{H_1})\varphi(\llangle [w]_\mathfrak c\rrangle_{H_1}) \, \text{by Remark \ref{ClosureConj}}\\
        &=\llangle \varphi([v]_\mathfrak c)\rrangle_{H_2}\llangle \varphi([w]_\mathfrak c)\rrangle_{H_2}\, \text{since $\varphi$ is surjective}\\
        &=\llangle [r]_\mathfrak c\rrangle \llangle [s]_\mathfrak c\rrangle_{H_2}=\llangle r\rrangle_{H_2}\llangle s\rrangle_{H_2} \, \text{by Remark \ref{ClosureConj}}\\
        &=\llangle r,s\rrangle_{H_2}.
    \end{aligned}
\end{equation*}
This concludes the proof.
\end{proof}

\begin{proof}[Proof of Theorem \ref{Classification3}]
If $H_1$ and $H_2$ are finite, this is the content of Theorem \ref{Classification1}. From now on, we assume that $H_1$ and $H_2$ are infinite. Write $H_1=W_b^c(k,bn,cm)$ and $H_2=W_{b'}^{c'}(k',b'n',c'm')$ with $n\leq m$ and $n'\leq m'$. For this proof, denote by $a_1,p,q$ the elements of $H_2$ corresponding to $x_1,y,z$ in Presentation \eqref{GeneralPresW}. Using Corollary \ref{Numberof1}, we can separate the proof in five cases:\\
(i) $b=b'=c=c'=1$.\\
(ii) $b,b'\neq 1$, $c=c'=1$.\\
(iii) $b=b'=1$, $c,c'\neq 1$.\\
(iv) $b=c'=1$, $c,b'\neq 1$.\\
(v) $1\notin\{b,c\}$.\\
Note that the case $c=b'=1$, $b=c'\neq 1$ is symmetric to (iv). Case (i) is the content of Theorem \ref{Classification2}.\\
\fbox{(ii) $b,b'\neq 1$, $c=c'=1$:} In this case, we have that $m$ and $m'$ are strictly greater than $1$ since $cm,c'm'\geq 2$ by assumption. By Proposition \ref{NumberConj}, we have $\mathcal H_\mathfrak c(H_1)=\{[x_1]_\mathfrak c,[y]_\mathfrak c\}$ and $\mathcal H_\mathfrak c(H_2)=\{[a_1]_\mathfrak c,[p]_\mathfrak c\}$. Now, by Corollary \ref{InvariantHyperplane} we have two cases to treat.\\
\underline{(ii.1) $\ivl{x_1}{a_1}$, $\ivl yp$:} In this case, Remark \ref{ClosureConj} implies that 
\begin{equation}\label{case11}
    \varphi(\llangle y\rrangle_{H_1})=\varphi(\llangle [y]_\mathfrak c\rrangle_{H_1})\underset{\varphi \, \text{iso.}}=\llangle \varphi([y]_\mathfrak c)\rrangle_{H_2}=\llangle [p]_\mathfrak c\rrangle_{H_2} =\llangle p\rrangle_{H_2}.
\end{equation}
Equation \eqref{case11} shows that the morphism $H_1\to H_2/\llangle p\rrangle_{H_2}$ has kernel $\llangle y\rrangle_{H_1}$, hence Diagram \eqref{CommSquareFINDANAME} provides the isomorphisms
\begin{equation*}
W(k,n,m)\cong H_1/\llangle y\rrangle_{H_1}\cong H_2\llangle p\rrangle_{H_2}\cong W(k',n',m').
\end{equation*}
\noindent
If $n=1$, we get $W(k,n,m)\cong \Z/k\cong W(k',n',m')$ so that Lemma \ref{ToricCyclic} implies that $1\in \{n',m'\}$. Since $n'\leq m',$ we get $n'=1$. Moreover, Lemma \ref{ToricCyclic} implies that $k'=k$. Finally, Corollary \ref{nm=} implies that $m=m'$ and we get $(k,n,m)=(k',n',m')$. If $n\neq 1$, since $n\leq m$ Lemma \ref{ToricCyclic} implies that $W(k,n,m)$ is not cyclic hence neither is $W(k',n',m')$ so that Theorem \ref{Classification2} applies and identifies $(k,n,m)$ with $(k',n',m')$ since $n\leq m$ and $n'\leq m'$. Similarly, Remark \ref{QuotientOrder} provides an isomorphism 
\begin{equation*}
\Z/b\cong H_1/\llangle x_1\rrangle_{H_1}\cong H_2/\llangle a_1\rrangle_{H_2}\cong \Z/b'.
\end{equation*}
We get that $b=b'$, hence we have
\begin{equation*}
\begin{aligned}
    C(H_1)&=\col{k}{1}{bn}{n}{cm}{m}=\col{k'}{1}{b'n'}{n'}{m'}{m'}\\&=\col{k'}{1}{b'n'}{n'}{c'm'}{m'}=C(H_2).
\end{aligned}
\end{equation*}
\underline{(ii.2) $\ivl{x_1}{p}$, $\ivl y{a_1}$:} In this case, by the same reasoning as above we have the following isomorphisms:
\begin{equation*}
W(k,n,m)\cong H_1/\llangle y\rrangle_{H_1}\cong H_2/\llangle a_1\rrangle_{H_2}\cong \Z/b'.
\end{equation*}
Similarly, we have $W(k',n',m')\cong \Z/b$. Since $m,m'>1$, by Lemma \ref{ToricCyclic} we get $n=n'=1$, $k'=b$ and $b'=k$. Moreover, Corollary \ref{nm=} implies that $m=m'$. We get
\begin{equation*}
\begin{aligned}
    C(H_1)&=\col{k}{1}{bn}{n}{cm}{m}=\col{b'}{1}{k'}{1}{m'}{m'}\\&=\col{k'}{1}{b'n'}{n'}{c'm'}{m'}=C(H_2).
    \end{aligned}
\end{equation*}
\fbox{(iii) $b=b'=1,$ $c,c'\neq 1$:} In this case, we have that $n$ and $n'$ are strictly greater than 1 since $bn,b'n'\geq 2$ by assumption. Since $m\geq n$ and $m'\geq n'$, we also have $m,m'>1$. By Proposition \ref{NumberConj}, we have $\mathcal H_\mathfrak c(H_1)=\{[x_1]_\mathfrak c,[z]_\mathfrak c\}$ and $\mathcal H_\mathfrak c(H_2)=\{[a_1]_\mathfrak c,[q]_\mathfrak c\}$. Now, by Corollary \ref{InvariantHyperplane} we have two cases to treat.\\
\underline{(iii.1) $\ivl{x_1}{a_1},$ $\ivl zq$:} In this case, since $k,n,m,k',n',m'>1$, Theorem \ref{Classification2} applies so that the quotient isomorphism 
\begin{equation*}
    W(k,n,m)\cong H_1/\llangle z\rrangle_{H_1}\cong H_2/\llangle q\rrangle_{H_2}\cong W(k',n',m')
\end{equation*}
identifies $(k,n,m)$ with $(k',n',m')$ and the quotient isomorphism
\begin{equation*}
\Z/c\cong H_1/\llangle x_1\rrangle_{H_1}\cong H_2/\llangle a_1\rrangle_{H_2}\cong \Z/c'
\end{equation*}
identifies $c$ with $c'$. We get
\begin{equation*}
\begin{aligned}
    C(H_1)&=\col{k}{1}{bn}{n}{cm}{m}=\col{k'}{1}{n'}{n'}{c'm'}{m'}\\&=\col{k'}{1}{b'n'}{n'}{c'm'}{m'}=C(H_2).
    \end{aligned}
\end{equation*}
\underline{(iii.2) $\ivl{x_1}q$, $\ivl z{a_1}$:} In this case, the quotient isomorphism
\begin{equation*}
W(k,n,m)\cong H_1/\llangle z\rrangle_{H_1}\cong H_2/\llangle a_1\rrangle_{H_2} \cong \Z/c'
\end{equation*}
implies that $W(k,n,m)$ is cyclic but since $k,n,m>1$, we get a contradiction using Lemma \ref{ToricCyclic}.

\noindent
\fbox{(iv) $b=c'=1$, $c,b'\neq 1$:} In this case, we have that $n$ and $m'$ are strictly larger than $1$ since $bn,c'm'\geq 2$ by assumption. Since $m\geq n$, we also have $m>1$.
By Proposition \ref{NumberConj}, we have $\mathcal H_\mathfrak c(H_1)=\{[x_1]_\mathfrak c, [z]_\mathfrak c\}$ and $\mathcal H_\mathfrak c(H_2)=\{[a_1]_\mathfrak c,[p]_\mathfrak c\}$. Now, by Corollary \ref{InvariantHyperplane} we have two cases to treat.\\
\underline{(iv.1) $\ivl{x_1}{a_1}$, $\ivl zp$:} In this case, since $k,n,m>1$, Lemma \ref{ToricCyclic} implies that the group $W(k,n,m)$ is not cyclic. In particular, the quotient isomorphism
\begin{equation*}
    W(k,n,m)\cong H_1/\llangle z\rrangle_{H_1}\cong H_2/\llangle p\rrangle_{H_2}\cong W(k',n',m')
\end{equation*}
shows that $W(k',n',m')$ is not cyclic either. This shows that $n'\geq 2$ and Theorem \ref{Classification2} identifies $(k,n,m)$ with $(k',n',m')$. Moreover, the quotient isomorphism 
\begin{equation*}\Z/c\cong H_1/\llangle x_1\rrangle_{H_1}\cong H_2/\llangle a_1\rrangle_{H_2}\cong \Z/b'
\end{equation*}
identifies $c$ with $b'$. Using Lemma \ref{SameParent}, we have $\multl k,bn,cm\multr=\multl k',b'n',c'm'\multr$. Moreover, we have $b=c'=1$, hence $(c,k,n,m)=(b',k',n',m')$ so that $\multl n,cm\multr=\multl cn,m\multr$. Finally, since $c\neq 1$ we have $n=m$, which leads to a contradiction because $n$ and $m$ are coprime and larger than or equal to 2.\\
\underline{(iv.2) $\ivl{x_1}p$, $\ivl{z}{a_1}$:} In this case, the quotient isomorphism
\begin{equation*}
    W(k,n,m)\cong H_1/\llangle z\rrangle_{H_1}\cong H_2/\llangle a_1\rrangle_{H_2}\cong \Z/b'
\end{equation*}
implies that $W(k,n,m)$ is cyclic, which leads to a contradiction using Lemma \ref{ToricCyclic} because $n$ and $m$ are larger than or equal to 2.

\noindent
\fbox{(v) $1\notin \{b,c\}$:} By Proposition \ref{NumberConj}, we have $\mathcal H_\mathfrak c(H_1)=\{[x_1]_\mathfrak c,[y]_\mathfrak c,[z]_\mathfrak c\}$ and $\mathcal H_\mathfrak c(H_2)=\{[a_1]_\mathfrak c,[p]_\mathfrak c,[q]_\mathfrak c\}$. Now, by Corollary \ref{InvariantHyperplane} we have six cases to treat.\\
\underline{(v.1) $\ivl{x_1}{a_1}$, $\ivl yp$, $\ivl zq$:} First, by Proposition \ref{PhiTransport} we have $\varphi(\llangle y,z\rrangle_{H_1})=\llangle p,q\rrangle_{H_2}$. This shows that the morphism $H_1\to H_2/\llangle p,q\rrangle_{H_2}$ has kernel $\llangle y,z\rrangle_{H_1}$, hence Diagram \eqref{CommSquareFINDANAME} provides the isomorphisms 
\begin{equation*}
W(k,n,m)\cong H_1\llangle y,z\rrangle_{H_1}\cong H_2\llangle p,q\rrangle_{H_2}\cong W(k',n',m').
\end{equation*}
\noindent
By the same argument as for the case (ii.1), we conclude that $(k,n,m)=(k',n',m')$. Moreover, the quotient isomorphisms
\begin{equation*}
    \Z/c\cong H_1/\llangle x_1,y\rrangle_{H_1} \cong H_2/\llangle a_1,p\rrangle_{H_2}\cong \Z/c'
\end{equation*}
and 
\begin{equation*}
    \Z/b\cong H_1/\llangle x_1,z\rrangle_{H_1} \cong H_2/\llangle a_1,q\rrangle_{H_2}\cong \Z/b'
\end{equation*}
identify $(b,c)$ with $(b',c')$, hence we have
\begin{equation*}
    C(H_1)=\col{k}{1}{bn}{n}{cm}{m}=\col{k'}{1}{b'n'}{n'}{c'm'}{m'}=C(H_2).
\end{equation*}
\underline{(v.2) $\ivl{x_1}{a_1}$, $\ivl yq$, $\ivl zp$:} In this case, the same argument as for the case (v.1) identifies $(k,n,m)$ with $(k',n,'m')$. Moreover, the quotient isomorphisms 
\begin{equation*}
    \Z/c\cong H_1/\llangle x_1,y\rrangle_{H_1} \cong H_2/\llangle a_1,q\rrangle_{H_2}\cong \Z/b'
\end{equation*}
and 
\begin{equation*}
    \Z/b\cong H_1/\llangle x_1,z\rrangle_{H_1} \cong H_2/\llangle a_1,p\rrangle_{H_2}\cong \Z/c'
\end{equation*}
identify $(b,c)$ with $(c',b')$. Combining this with Lemma \ref{SameParent}, we have 
\begin{equation*}
\multl bn,cm\multr=\multl k,bn,cm\multr\backslash\multl k\multr=\multl k',b'n',c'm'\multr\backslash\multl k'\multr=\multl cn,bm\multr.
\end{equation*}
If $bn=cn$, we get $b=c=b'=c'$ so that 
\begin{equation*}
\begin{aligned}
    C(H_1)&=\col{k}{1}{bn}{n}{cm}{m}=\col{k'}{1}{bn'}{n'}{bm'}{m'}\\&=\col{k'}{1}{b'n'}{n'}{c'm'}{m'}=C(H_2).
    \end{aligned}
\end{equation*}
If $bn=bm$, we get $n=m$. Since $n$ and $m$ are coprime, this gives $n=m=1$, which in turn implies $n'=m'=1$ since $(k,n,m)=(k',n',m')$. We thus have
\begin{equation*}
\begin{aligned}
    C(H_1)&=\col{k}{1}{bn}{n}{cm}{m}=\col{k'}{1}{c'}{1}{b'}{1}\\&=\col{k'}{1}{b'n'}{n'}{c'm'}{m'}=C(H_2).
    \end{aligned}
\end{equation*}
\underline{(v.3) $\ivl{x_1}{p}$, $\ivl y{a_1}$, $\ivl zq$:} In this case, the quotient isomorphism 
\begin{equation*}
    W(k,n,m)\cong H_1/\llangle y,z\rrangle_{H_1}\cong H_2/\llangle a_1,q\rrangle_{H_2}\cong \Z/b'
\end{equation*}
shows that $W(k,n,m)$ is cyclic. Since $n\leq m$, Lemma \ref{ToricCyclic} identifies $k$ with $b'$ and $n$ with $1$. Similarly, the quotient isomorphism
\begin{equation*}
   \Z/b\cong H_1/\llangle x_1,z\rrangle_{H_1}\cong H_2/\llangle p,q\rrangle_{H_2}\cong W(k',n',m')
\end{equation*}
shows that $W(k',n',m')$ is cyclic. Since $n'\leq m'$, Lemma \ref{ToricCyclic} identifies $b$ with $k'$ and $n'$ with $1$. By Corollary \ref{nm=}, we get $m=m'$. Finally, Lemma \ref{SameOrder} implies that $c=c'$. We thus get
\begin{equation*}\begin{aligned}
    C(H_1)&=\col{k}{1}{bn}{n}{cm}{m}=\col{b'}{1}{k'}{1}{c'm'}{m'}\\&=\col{k'}{1}{b'n'}{n'}{c'm'}{m'}=C(H_2).
    \end{aligned}
\end{equation*}
\underline{(v.4) $\ivl{x_1}{p}$, $\ivl y{q}$, $\ivl z{a_1}$:}
In this case, the quotient isomorphism 
\begin{equation*}
    W(k,n,m)\cong H_1/\llangle y,z\rrangle_{H_1}\cong H_2/\llangle a_1,q\rrangle_{H_2}\cong \Z/b'
\end{equation*}
shows that $W(k,n,m)$ is cyclic. Since $n\leq m$, Lemma \ref{ToricCyclic} identifies $k$ with $b'$ and $n$ with $1$. Similarly, the quotient isomorphism
\begin{equation*}
   \Z/c\cong H_1/\llangle x_1,y\rrangle_{H_1}\cong H_2/\llangle p,q\rrangle_{H_2}\cong W(k',n',m')
\end{equation*}
shows that $W(k',n',m')$ is cyclic. Since $n'\leq m'$, Lemma \ref{ToricCyclic} identifies $c$ with $k'$ and $n'$ with $1$. By Corollary \ref{nm=}, we get $m=m'$. Finally, Lemma \ref{SameOrder} implies that $b=c'$. If $m=m'=1,$ we have
\begin{equation*}
\begin{aligned}
    C(H_1)&=\col{k}{1}{bn}{n}{cm}{m}=\col{b'}{1}{c'}{1}{k'}{1}\\&=\col{k'}{1}{b'n'}{n'}{c'm'}{m'}=C(H_2).
    \end{aligned}
\end{equation*}
Assume then that $m\neq 1$. Lemma \ref{SameParent} implies that 
\begin{equation}\label{eqv.4}
    \multl k,b,cm\multr=\multl k,bn,cm\multr=\multl k',b'n',c'm'\multr=\multl c,k,bm\multr.
\end{equation}
Since $m\geq 2$, Equation \eqref{eqv.4} implies that $b=c$. Since $k'=c=b=c'$, $k=b'$ and $m=m'$ we get
\begin{equation*}
\begin{aligned}
    C(H_1)&=\col{k}{1}{bn}{n}{cm}{m}=\col{b'}{1}{b}{1}{bm}{m}\\&=\col{k'}{1}{b'n'}{n'}{c'm'}{m'}=C(H_2).
    \end{aligned}
\end{equation*}
\underline{(v.5) $\ivl{x_1}{q}$, $\ivl y{a_1}$, $\ivl z{p}$:}
This case can be deduced from case (v.4) by exchanging the role of $H_1$ and $H_2$ and replacing $\varphi$ by $\varphi^{-1}$.\\
\underline{(v.6) $\ivl{x_1}{q}$, $\ivl y{p}$, $\ivl z{a_1}$:}
In this case, the quotient isomorphism 
\begin{equation*}
    W(k,n,m)\cong H_1/\llangle y,z\rrangle_{H_1}\cong H_2/\llangle a_1,p\rrangle_{H_2}\cong \Z/c'
\end{equation*}
shows that $W(k,n,m)$ is cyclic. Since $n\leq m$, Lemma \ref{ToricCyclic} identifies $k$ with $c'$ and $n$ with $1$. Similarly, the quotient isomorphism
\begin{equation*}
   \Z/c\cong H_1/\llangle x_1,y\rrangle_{H_1}\cong H_2/\llangle p,q\rrangle_{H_2}\cong W(k',n',m')
\end{equation*}
shows that $W(k',n',m')$ is cyclic. Since $n'\leq m'$, Lemma \ref{ToricCyclic} identifies $c$ with $k'$ and $n'$ with $1$. By Corollary \ref{nm=}, we get $m=m'$. Finally, Lemma \ref{SameOrder} implies that $b=b'$. If $m=m'=1,$ we have
\begin{equation*}
\begin{aligned}
    C(H_1)&=\col{k}{1}{bn}{n}{cm}{m}=\col{c'}{1}{b'}{1}{k'}{1}\\&=\col{k'}{1}{b'n'}{n'}{c'm'}{m'}=C(H_2).
    \end{aligned}
\end{equation*}
Assume then that $m\neq 1$. Lemma \ref{SameParent} implies that 
\begin{equation}\label{eqv.6}
    \multl k,b,cm\multr=\multl k,bn,cm\multr=\multl k',b'n',c'm'\multr=\multl c,b,km\multr.
\end{equation}
Since $m\geq 2$, Equation \eqref{eqv.6} implies that $k=c$. Since $c'=k=c=k'$, $b=b'$ and $m=m'$ we get
\begin{equation*}
\begin{aligned}
    C(H_1)&=\col{k}{1}{bn}{n}{cm}{m}=\col{c}{1}{b'}{1}{cm}{m}\\&=\col{k'}{1}{b'n'}{n'}{c'm'}{m'}=C(H_2).
    \end{aligned}
\end{equation*}
This concludes the proof.
\end{proof}

\end{document}